%% file: main.tex
\pdfoutput=1
\documentclass[11pt, letterpaper]{article}

\input{macros}

\title{Separating Oblivious and Adaptive Models of Variable Selection}
\date{}

\author{Ziyun Chen\thanks{
University of Washington.
\texttt{ziyuncc@cs.washington.edu}.
} \and Jerry Li\thanks{
University of Washington.
\texttt{jerryzli@cs.washington.edu}.
} \and Kevin Tian\thanks{
University of Texas at Austin.
\texttt{kjtian@cs.utexas.edu}.
}\and Yusong Zhu\thanks{
University of Texas at Austin.
\texttt{zhuys@utexas.edu}.
}}
\begin{document}

\maketitle

\begin{abstract}
Sparse recovery is among the most well-studied problems in learning theory and high-dimensional statistics. In this work, we investigate the statistical and computational landscapes of sparse recovery with $\ell_\infty$ error guarantees. This variant of the problem is motivated by \emph{variable selection} tasks, where the goal is to estimate the support of a $k$-sparse signal in $\R^d$. Our main contribution is a provable separation between the \emph{oblivious} (``for each'') and \emph{adaptive} (``for all'') models of $\ell_\infty$ sparse recovery. We show that under an oblivious model, the optimal $\ell_\infty$ error is attainable in near-linear time with $\approx k\log d$ samples, whereas in an adaptive model, $\gtrsim k^2$ samples are necessary for any algorithm to achieve this bound. This establishes a surprising contrast with the standard $\ell_2$ setting, where $\approx k \log d$ samples suffice even for adaptive sparse recovery. We conclude with a preliminary examination of a \emph{partially-adaptive} model, where we show nontrivial variable selection guarantees are possible with $\approx k\log d$ measurements.\footnote{Extended abstract accepted for presentation at the Conference on Learning Theory (COLT) 2026}
\end{abstract}
\thispagestyle{empty}
\newpage
\tableofcontents
\thispagestyle{empty}
\newpage

\input{tex_file/intro}
\input{tex_file/preliminaries}
\input{tex_file/model1}
\input{tex_file/model3}
\input{tex_file/model2}

\section*{Acknowledgements}
We are grateful to the NSF AI Institute for Foundations of Machine Learning (IFML) for their support of this project.
YZ and KT thank Eric Price for helpful conversations on the $\ell_\infty$ sparse recovery literature and the partially-adaptive model in Section~\ref{sec:for_all}.
\newpage
\bibliography{ref}
\bibliographystyle{alpha}

\newpage 
\appendix
\input{tex_file/append}
\input{tex_file/appendix2}
\end{document}

%% file: macros.tex
\usepackage[english]{babel}
\usepackage[T1]{fontenc}
\usepackage{parskip}
\usepackage[margin=1in]{geometry}
\usepackage{bbm,bm}
\usepackage{bbold}
\usepackage{parskip}
\usepackage{amsmath}
\usepackage{amssymb}
\usepackage{amsthm}
\usepackage{thmtools, thm-restate}
\usepackage{booktabs}
\usepackage{array}
\usepackage[lined,boxed,ruled,norelsize,linesnumbered]{algorithm2e}
\usepackage{graphicx} 
\usepackage{enumitem}

\usepackage{hyperref}
\hypersetup{
	colorlinks=true,
	linkcolor=blue!70!black,
	citecolor=blue!70!black,
	urlcolor=blue!70!black
}
\usepackage{graphicx}
\usepackage[english]{babel}
\usepackage[utf8x]{inputenc}
\usepackage[T1]{fontenc}
\usepackage{mathtools}
\usepackage{cleveref}
\usepackage{parskip}
\usepackage{xcolor}
\newtheorem{theorem}{Theorem}
\newtheorem{lemma}{Lemma}
\newtheorem{fact}{Fact}
\newtheorem{definition}{Definition}
\newtheorem{corollary}{Corollary}
\newtheorem{proposition}{Proposition}
\newtheorem{model}{Model}

\newtheorem{problem}{Problem}

\newtheorem{remark}{Remark}

\newcommand{\defeq}{:=}

\newcommand{\norm}[1]{\left\lVert#1\right\rVert}
\newcommand{\norms}[1]{\lVert#1\rVert}
\newcommand{\normop}[1]{\left\lVert#1\right\rVert_{\textup{op}}}
\newcommand{\normf}[1]{\left\lVert#1\right\rVert_{\textup{F}}}

\newcommand{\inprod}[2]{\left\langle#1, #2\right\rangle}

\newcommand{\eps}{\epsilon}
\newcommand{\lam}{\lambda}
\newcommand{\R}{\mathbb{R}}

\newcommand{\N}{\mathbb{N}}

\newcommand{\half}{\frac{1}{2}}

\newcommand{\E}{\mathbb{E}}

\newcommand{\Var}{\textup{Var}}

\newcommand{\Nor}{\mathcal{N}}

\newcommand{\Tr}{\textup{Tr}}

\newcommand{\id}{\mathbf{I}}

\newcommand{\polylog}{\textup{polylog}}
\newcommand{\Par}[1]{\left(#1\right)}
\newcommand{\Brack}[1]{\left[#1\right]}
\newcommand{\Brace}[1]{\left\{#1\right\}}
\newcommand{\Abs}[1]{\left|#1\right|}

\newcommand{\ind}{\mathbb{I}}

\newcommand{\nnz}{\mathrm{nnz}}
\newcommand{\poly}{\mathrm{poly}}

\newcommand{\RIP}{\mathrm{RIP}}

\newcommand{\supp}{\mathrm{supp}}

\newcommand{\vth}{\boldsymbol{\theta}}

\newcommand{\matp}{\mathbf{P}}

\newcommand{\vxi}{\boldsymbol{\xi}}
\newcommand{\sig}{\sigma}

\newcommand{\twonorm}[1]{\norm{#1}_2}
\newcommand{\onenorm}[1]{\norm{#1}_1}
\newcommand{\infnorm}[1]{\norm{#1}_\infty}

\newcommand{\vlam}{\boldsymbol{\lambda}}
\newcommand{\vzero}{\mathbf{0}}
\newcommand{\vone}{\mathbf{1}}
\newcommand{\vths}{\vth^\star}
\newcommand{\vhth}{\widehat{\vth}}
\newcommand{\vthp}{\vth^\prime}

\newcommand{\kldiv}[1]{D_{\textup{KL}}\Par{#1}}

\newcommand{\cov}[1]{\mathrm{Cov}\left[#1\right]}

\newcommand{\expect}[1]{\mathbb{E}\left[#1\right]}

\newcommand{\Prob}{\mathbb{P}}
\renewcommand{\Pr}{\Prob}
\newcommand{\prob}[1]{\mathbb{P}\left[#1\right]}

\newcommand{\ma}{{\mathbf{A}}}

\newcommand{\mc}{{\mathbf{C}}}

\newcommand{\me}{{\mathbf{E}}}

\newcommand{\mg}{{\mathbf{G}}}

\newcommand{\mi}{{\mathbf{I}}}

\newcommand{\mm}{{\mathbf{M}}}

\newcommand{\mr}{{\mathbf{R}}}

\newcommand{\mx}{{\mathbf{X}}}

\newcommand{\mX}{{\mathbf{X}}}

\newcommand{\vb}{{\mathbf{b}}}

\newcommand{\ve}{{\mathbf{e}}}

\newcommand{\vq}{{\mathbf{q}}}
\newcommand{\vr}{{\mathbf{r}}}

\newcommand{\vu}{{\mathbf{u}}}
\newcommand{\vv}{{\mathbf{v}}}
\newcommand{\vw}{{\mathbf{w}}}
\newcommand{\vx}{{\mathbf{x}}}
\newcommand{\vy}{{\mathbf{y}}}

\newcommand{\calA}{{\mathcal{A}}}

\newcommand{\calE}{{\mathcal{E}}}
\newcommand{\calF}{{\mathcal{F}}}

\newcommand{\calM}{{\mathcal{M}}}
\newcommand{\calN}{{\mathcal{N}}}

\newcommand{\calP}{{\mathcal{P}}}

\newcommand{\calV}{{\mathcal{V}}}

\newcommand{\calX}{{\mathcal{X}}}

\newcommand{\bbE}{{\mathbb{E}}}

\newcommand{\bbP}{{\mathbb{P}}}

\newcommand{\bbR}{{\mathbb{R}}}

\newcommand{\stepa}[1]{\overset{(a)}{#1}}
\newcommand{\stepb}[1]{\overset{(b)}{#1}}
\newcommand{\stepc}[1]{\overset{(c)}{#1}}
\newcommand{\stepd}[1]{\overset{(d)}{#1}}
\newcommand{\stepe}[1]{\overset{(e)}{#1}}

\newcommand{\paren}[1]{\left(#1\right)}
\newcommand{\bra}[1]{\left[#1\right]}
\newcommand{\cbra}[1]{\left\{#1\right\}}

\crefname{assumption}{assumption}{assumptions}

\newcommand{\innp}[1]{\left \langle #1 \right\rangle}

\newcommand{\sign}{\mathrm{sign}}

\definecolor{burntorange}{rgb}{0.8, 0.33, 0.0}
\definecolor{skyblue}{HTML}{75aadb}

\newcommand{\xxinv}{[\mx^\top\mx]^{-1}}
\newcommand{\ktwonorm}[1]{\norm{#1}_{2, k}}

\newcommand{\subg}{\textup{subG}}
\newcommand{\infbound}{\infnorm{\mx^\top \vxi}}

\newcommand{\sfp}{{S_{\textup{FP}}}}
\newcommand{\sfn}{{S_{\textup{FN}}}}
\newcommand{\stp}{{S_{\textup{TP}}}}

\newcommand{\minmax}{\textup{minimax}}
\newcommand{\bayes}{\textup{Bayes}}
\newcommand{\IHT}{\mathsf{IHT}}
\newcommand{\OSR}{\mathsf{ObliviousSparseRecovery}}
\newcommand{\OSRR}{\mathsf{OSRReduction}}
\newcommand{\bvth}{\bar{\vth}}
\newcommand{\AO}{\mathsf{AdaptiveObserve}}
\newcommand{\AOSR}{\mathsf{AdaptiveObservationSparseRecovery}}
\newcommand{\hV}{\widehat{V}}
\newcommand{\simu}{\sim_{\textup{unif.}}}

%% file: tex_file/intro.tex
\section{Introduction}
\setcounter{page}{1}

We consider the problem of sparse recovery, a cornerstone problem in learning theory and high-dimensional statistics, with applications to many diverse fields, including medical imaging \cite{lustig2007sparse, graff2015compressive}, computational photography \cite{duarte2008single, gibson2020single} and wireless communication \cite{duarte2011structured, han2013compressive}.
In this problem, we assume there is some underlying ground truth $k$-sparse signal $\vths \in \R^d$, and our goal is to recover it given $n$ (potentially noisy) linear measurements, i.e., from $\vy \defeq \mx \vths + \vxi$ for some measurement matrix $\mx \in \R^{n \times d}$ and some noise vector $\vxi \in \R^n$.
Typically, we are interested in the case where the number of measurements $n$ is much smaller than $d$, and the main statistical measure of merit is how large $n$ has to be to achieve good estimation error for $\vths$.

In this paper, we investigate the question of learning $\vths$ to $\ell_\infty$ error, a task which is closely related to the well-studied question of \emph{variable selection} for sparse linear models \cite{tibshirani1996regression, fan2001variable, candes2007dantzig, meinshausen2010stability, barber2015controlling}.
In many real-world applications of sparse recovery, a primary goal is to select which features of the regression model have significant explanatory power \cite{yamashita2008sparse, belloni2014high, chowdhury2020variable, akyapi2023machine}.
In other words, the task is to find the support of the large elements of the unknown $\vths$. 

This problem is of particular import in overparameterized, high-dimensional settings where $d \gg |\supp(\vths)|$.
By a thresholding argument, observe that this task is more or less equivalent to learning $\vths$ to good $\ell_\infty$ error.
Indeed, recovery in $\ell_\infty$ immediately implies that we can also learn the support of the heavy elements of $\vths$, and conversely, if we can identify this support efficiently, it is (in many natural settings) straightforward to recover $\vths$, since by focusing on those coordinates, we can reduce the problem to standard (i.e., dense) linear regression as long as $n \gtrsim |\supp(\vths)|$.

In the most commonly-studied setting where $\mx$ is an entrywise Gaussian measurement matrix, and the goal is to learn $\vths$ to good $\ell_2$-norm error, the statistical complexity of sparse recovery is by now fairly well-understood.
The seminal work of~\cite{candes2005,candes2006} demonstrated that in the noiseless setting, i.e., $\vxi = \vzero_d$, exact recovery is possible when $n \approx k \log \frac d k$, and moreover, this is achievable with an efficient algorithm ($\ell_1$ minimization).
This sample complexity is tight up to a logarithmic factor, simply by a rank argument.
Follow-up work of~\cite{candes2006robust} demonstrated that for general noise vectors $\vxi \in \R^n$, there is an efficiently-computable estimator $\vth$ which achieves $\ell_2$-norm error
\[
\norm{\vths - \vth}_2 = O\Par{\norm{\vxi}_2},
\]
with the same asymptotic number of measurements, and this recovery rate is optimal.

However despite the large literature on sparse recovery, the sample complexity landscape is significantly less well-understood for recovery in the $\ell_\infty$ norm, and for variable selection in general.
While a number of papers \cite{lounici_2008, ye2010rate, cai2011OMP,huang2017,li2019sub, wainwright2019high} demonstrate upper bounds for this problem, including several that prove error rates for popular algorithms such as LASSO \cite{lounici_2008, ye2010rate, wainwright2019high}, very few lower bounds are known (see Section~\ref{ssec:related} for a more detailed discussion), and moreover, several of these results require additional assumptions on $\vths$ and/or the noise.
For instance, we show in Remark~\ref{remark:compare_error} and Appendix~\ref{app:discuss_gaussian_noise} that the error guarantee in \cite{wainwright2019high} is suboptimal whenever $n = o(k^2)$, even in the mildest noise model we consider (Model~\ref{model:for_each}). To the best of our knowledge, no prior polynomial-time algorithm, including LASSO, was known to achieve the optimal \(\ell_\infty\) error using
\(O(k\log \frac d k)\) samples. This is in stark contrast to the landscape for learning in $\ell_2$, where one can obtain a ``for all'' guarantee for learning any $k$-sparse vector $\vths$ with the same $\mx$.
Additionally, there are very limited lower bounds for learning in $\ell_\infty$ error, and they do not typically match the existing upper bounds.
This state of affairs begs the natural question:
\begin{center}
    {\it Can we characterize the statistical landscape of learning sparse linear models in $\ell_\infty$ error?}
\end{center}
Relatedly, can we understand the sample complexity of variable selection for sparse linear regression?

In this work, we make significant progress on understanding these fundamental
questions. Our main contributions are new sample complexity upper and lower
bounds for variable selection and \(\ell_\infty\) sparse recovery, under various
natural generative models. Before we go into detail about our results, we wish
to emphasize three main algorithmic and conceptual contributions of our investigation.

\paragraph{Optimal oblivious recovery in nearly-linear time.}
We give, to the best of our knowledge, the first polynomial-time algorithm that achieves the optimal \(\ell_\infty\) error in the oblivious Gaussian-noise model with \(n=O(k\log \frac d k)\) measurements. In fact, our algorithm runs in nearly-linear time. This improves over prior LASSO-based guarantees: as noted above, the bound implied by \cite{wainwright2019high} is strictly worse than the optimal Gaussian-noise scale whenever \(n=o(k^2)\).
\paragraph{Adaptivity matters for $\ell_\infty$ sparse recovery.} 
As mentioned, prior works for variable selection and $\ell_\infty$ sparse recovery often required additional assumptions on how the support of the unknown $k$-sparse vector $\vths$ is chosen.
We show that this is inherent: if $\vths$ and $\xi$ are chosen independently of the measurement matrix $\mx$ (the ``oblivious'' or ``for each'' model), then recovery is possible with $n = O(k \log \frac d k)$ measurements in nearly-linear time, but they can be chosen with knowledge of $\mx$ (the ``adaptive'' or ``for all'' model),\footnote{The literature sometimes uses the term ``for all'' model to describe settings with a dependent signal $\vths$ and independent noise $\vxi$, i.e., our ``partially-adaptive'' Model~\ref{model:for_all}. For disambiguation, we primarily refer to our ``for each'' and ``for all'' models as the \emph{oblivious} and \emph{adaptive} models throughout.} then $n = \Omega (k^2)$ measurements are both necessary and (up to a $\log d$ factor) sufficient. In other words, unlike for recovery in $\ell_2$, adaptivity in the choice of the unknown parameters $(\vths, \vxi)$ \emph{provably makes the problem statistically harder}.

\paragraph{A new, canonical choice of error metric.} 
For sparse recovery in the $\ell_2$ norm, it is well-known that the best achievable recovery error is $\norm{\vxi}_2$ (up to constant factors).
However, no such characterization was previously known for $\ell_\infty$.
Various error metrics have been proposed by prior work, discussed thoroughly in Appendix~\ref{append:erro_metric}; however, none were known to yield a tight rate for the achievable error.
In this work, we show strong evidence that the correct error metric in $\ell_\infty$ is
\begin{equation}\label{eq:error_metric}
\mbox{err} (\mx, \vxi) \coloneq \norm{\mx^\top \vxi}_\infty.
\end{equation}
We give the following justifications for this error metric. First, in the oblivious model, we show that this quantity is equivalent to several others considered in the literature, up to appropriate scaling (Lemma~\ref{lem:equiv_err}).
Second, in the adaptive model, we demonstrate nearly-matching upper and lower bounds under the metric \eqref{eq:error_metric}, and we demonstrate that other quantities considered in the literature are provably impossible to achieve in the adaptive model (Lemma~\ref{lem:linf_impossible_adv_1}).

\subsection{Problem statements}\label{ssec:problems}

We now formally define the problems we study in this paper.
Throughout this introduction, we primarily consider the standard setting where $\mx$ has i.i.d.\ entries $\sim \calN(0, \frac 1 n)$; this scaling is convenient as it ensures that $\E [\mx^\top \mx] = \mi_d$.
With this, we now state the variable selection problem we study.

\begin{problem}[Variable selection]\label{prob:variable_selection}
Let $(n, d) \in \N^2$ and $k \in [d]$. Let $\mx \in \R^{n \times d}$ be a known measurement matrix from Model~\ref{model:subg},\footnote{We state a general model of sub-Gaussian matrix ensembles to capture the full generality of our results; the reader may primarily consider the i.i.d.\ entrywise $\Nor(0, \frac 1 n)$ model of $\mx$ for simplicity, which falls under Model~\ref{model:subg}.} and let $(\vths, \vxi) \in \R^d \times \R^n$ be unknown, so that $\nnz(\vths) \le k$ and 
\begin{equation}\label{eq:snr_linfty}\min_{i \in \supp(\vths)} \Abs{\vths_i} > C\norms{\mx^\top \vxi}_\infty,\end{equation}
for a universal constant $C > 0$, represent a signal and noise vector. We observe $(\mx, \vy)$, where $\vy \defeq \mx \vths + \vxi$. Our goal is to output $\supp(\vths) \subseteq [d]$.
\end{problem}

Problem~\ref{prob:variable_selection} is a support recovery problem, that asks to select the relevant variables of the signal vector $\vths$ from the observations $(\mx, \vy)$, under an appropriate signal-to-noise ratio condition \eqref{eq:snr_linfty}. 
As justified previously (and in more detail in Appendix~\ref{app:discuss_gaussian_noise}), we believe that the choice of $\norm{\mx^\top \vxi}_\infty$ on the right-hand side of~\eqref{eq:snr_linfty} is the correct parameterization for this problem.

Next, we formally define the problem of sparse recovery with $\ell_{\infty}$ error.

\begin{problem}[$\ell_\infty$ sparse recovery]\label{prob:linfty_sparse}
Let $(n, d) \in \N^2$ and $k \in [d]$. Let $\mx \in \R^{n \times d}$ be a known measurement matrix from Model~\ref{model:subg}, and let $(\vths, \vxi) \in \R^d \times \R^n$ be unknown so that $\nnz(\vths) \le k$. We observe $(\mx, \vy)$ where $\vy \defeq \mx \vths + \vxi$. Our goal is to output $\vth \in \R^d$ satisfying, for a universal constant $C > 0$,
\begin{equation}\label{eq:error_linfty}\norm{\vth - \vths}_\infty \le C\norms{\mx^\top \vxi}_\infty,\quad \nnz(\vth) \le k.\end{equation}
\end{problem}
To solve an instance of Problem~\ref{prob:variable_selection} with constant $C$, it suffices to solve Problem~\ref{prob:linfty_sparse} with constant $\frac C 2$, and then threshold the coordinates of $\vth$ at $\frac C 2 \norms{\mx^\top \vxi}_\infty$ to recover $\supp(\vths)$. Thus, Problem~\ref{prob:linfty_sparse} is a more general problem (up to the choice of $C$), and is largely the rest of the paper's focus. 

We next define various modeling assumptions used when studying Problem~\ref{prob:linfty_sparse}.
As mentioned, a key conceptual contribution of our paper is that the modeling assumptions play an important role in characterizing the statistical complexity of $\ell_\infty$ sparse recovery.

\begin{model}[Oblivious model]\label{model:for_each}
In the \emph{oblivious model}, $(\vths, \vxi)$ are chosen independently of $\mx$.
\end{model}

\begin{model}[Adaptive model]\label{model:adv_noise}
In the \emph{adaptive model}, we make no independence assumptions on the triple $(\vths, \vxi, \mx)$.
\end{model}

In Model~\ref{model:for_each}, an equivalent viewpoint is that $(\vths, \vxi)$ are first fixed (possibly as samples from a distribution), and then $\mx$ is independently sampled. In Model~\ref{model:adv_noise}, the order is intuitively reversed: first $\mx$ is sampled, and then $(\vths, \vxi)$ can be arbitrarily defined depending on its outcome. 

An algorithm succeeding under Model~\ref{model:adv_noise} is powerful: it can be used in arbitrary adaptively-defined instances of Problem~\ref{prob:linfty_sparse}, with the same $\mx$. This is particularly useful when Problem~\ref{prob:linfty_sparse} is used as a subroutine, e.g., in hyperparameter search or a wrapper optimization algorithm involving $\mx$.

Understanding Problem~\ref{prob:linfty_sparse} under Models~\ref{model:for_each} and~\ref{model:adv_noise} is our main focus, but there are various more fine-grained independence assumptions one could impose. For example, Section~\ref{sec:for_all} investigates a \emph{partially-adaptive model}, where only the noise is viewed as benign.

\subsection{Our results}\label{ssec:results}

\paragraph{Oblivious sparse recovery.} Our first main result is a new algorithm for $\ell_\infty$ sparse recovery in the oblivious model (Model~\ref{model:for_each}). We show that it is possible to solve Problem~\ref{prob:linfty_sparse} in this setting, with a number of samples matching that required for optimal $\ell_2$ sparse recovery.

\begin{theorem}[informal, see Theorem~\ref{thm:l_inf_norm_bound_model1}]
\label{thm:for-each-informal}
    Let $n = \Omega(k \log d)$, and let $\mx \in \R^{n \times d}$ have i.i.d.\ $\Nor(0, \frac 1 n)$ entries.
    There is an estimator which solves Problem~\ref{prob:linfty_sparse} under Model~\ref{model:for_each} with high probability.
    Moreover, the estimator can be computed in nearly-linear time.
\end{theorem}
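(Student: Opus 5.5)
The plan has three stages. First, run a standard $\ell_2$ sparse recovery algorithm to get a coarse estimate. Then split the sample and correct the estimate with a debiased, one-step least-squares update on a fresh block of measurements. Independence from the oblivious model makes the residual cross-terms behave like Gaussians of small variance. Finally, prune to $k$ coordinates.

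\textbf{Stage 1 (coarse estimate).} Split the rows of $\mx$ into two halves $\mx_1,\mx_2$, each with $n/2 = \Omega(k\log d)$ rows, with corresponding $\vy_1,\vy_2,\vxi_1,\vxi_2$. By RIP of $\mx_1$, iterative hard thresholding ($\IHT$, nearly-linear time with $O(\log)$ iterations) returns a $k$-sparse $\bvth$. It satisfies $\norms{\bvth-\vths}_2 \lesssim \sup_{|S|\le 2k}\norms{(\mx_1^\top\vxi_1)_S}_2 \le \sqrt{2k}\,\norms{\mx_1^\top\vxi_1}_\infty$. Under Model~\ref{model:for_each}, the entries of $\mx_1^\top\vxi_1$ and $\mx^\top\vxi$ are $\Nor(0,\norms{\vxi_1}_2^2/n)$ and $\Nor(0,\norms{\vxi}_2^2/n)$ respectively, so with high probability $\norms{\mx^\top\vxi}_\infty \asymp \norms{\vxi}_2\sqrt{\log d/n}$. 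The same holds for each half with $\norms{\vxi_j}_2$. A lower tail for a max of $d$ Gaussians gives the matching lower bound, which is what lets us compare against $\norms{\mx^\top\vxi}_\infty$; this comparison should be Lemma~\ref{lem:equiv_err}. A minor nuisance is that $\vxi$ may split unevenly between the halves. I would handle it by using a random row split and noting $\norms{\vxi_1}_2,\norms{\vxi_2}_2\le\norms{\vxi}_2$, so both halves' errors are bounded by the full-noise scale.

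\textbf{Stage 2 (debiasing).} Set $\vth' = \bvth + \mx_2^\top(\vy_2-\mx_2\bvth)$, suitably rescaled so that $\E\mx_2^\top\mx_2=\mi$. Writing $\vv=\vths-\bvth$ gives
\[\vth'-\vths = (\mx_2^\top\mx_2-\mi)\vv + \mx_2^\top\vxi_2.\]
The second term is at most $O(\norms{\mx^\top\vxi}_\infty)$ by the above. For the first term, $\vv$ depends only on $\mx_1,\vxi_1$ and hence is independent of $\mx_2$. So for each $i$, $\ve_i^\top(\mx_2^\top\mx_2-\mi)\vv$ is a centered sub-exponential variable of scale $\norms{\vv}_2/\sqrt n$. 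A union bound over $d$ coordinates gives $\norms{(\mx_2^\top\mx_2-\mi)\vv}_\infty\lesssim\norms{\vv}_2\sqrt{\log d/n}$, using $n\gtrsim\log d$ to stay in the sub-Gaussian regime. Plugging in $\norms{\vv}_2\lesssim\sqrt{k}\norms{\vxi}_2\sqrt{\log d/n}$ yields
\[\norms{\vv}_2\sqrt{\log d/n}\lesssim \norms{\vxi}_2\sqrt{\log d/n}\cdot\sqrt{k\log d/n}\lesssim\norms{\mx^\top\vxi}_\infty.\]
This is exactly where $n=\Omega(k\log d)$ is used.

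\textbf{Stage 3 (sparsity).} $\vth'$ is dense, so output its top-$k$ truncation $\vth$. Any coordinate kept off $\supp(\vths)$ has magnitude at most $\epsilon\defeq\norms{\vth'-\vths}_\infty$. Any true coordinate dropped was beaten by some coordinate of magnitude at most $\epsilon$ outside the support, or by one inside it, so it has value at most $2\epsilon$. Hence $\norms{\vth-\vths}_\infty\le 3\epsilon$. All steps are matrix–vector products plus $O(\log)$ IHT iterations, so the running time is nearly linear.

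The main obstacle is the Stage 2 concentration. It hinges on the independence of $\vv$ from $\mx_2$, which sample splitting provides, together with the lower bound relating $\norms{\vxi}_2\sqrt{\log d/n}$ to $\norms{\mx^\top\vxi}_\infty$. For general sub-Gaussian ensembles (Model~\ref{model:subg}), I would verify the lower bound through Lemma~\ref{lem:equiv_err} rather than through exact Gaussianity.
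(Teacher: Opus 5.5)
Your argument is correct, but after the IHT warm start it takes a different route from the paper. The paper (Algorithm~\ref{alg:IHT+OLS}) uses a second block only to threshold $|\mx^{(2)\top}_{i:}\vr^{(2)}|$, which yields $L\subseteq\supp(\vths-\vhth)$ missing only $O(r)$-small coordinates (Lemma~\ref{lem:large_coord_est}). It then runs OLS on $L$ with a third block, analyzed through the Schur-complement bound of Lemma~\ref{lem:noise_linf_bound_fix}. It needs that OLS step because Lemma~\ref{lem:large_coord_est} only uses $\twonorm{\vx_i}^2\in[\frac12,\frac32]$, so thresholding can locate heavy coordinates but not estimate them.

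Your extra ingredient is that $\twonorm{\vx_i}^2-1=O(\sqrt{\log(d/\delta)/n})$ uniformly in $i$. This makes the diagonal part $(\twonorm{\vx_i}^2-1)\vv_i$ no larger than the off-diagonal cross term, which you control exactly as in Lemma~\ref{lem:large_coord_est}. A single fresh-sample gradient step, i.e.\ one extra IHT iteration on an independent block, then already has $\ell_\infty$ error $O(\twonorm{\vv}\sqrt{\log(d/\delta)/n}+\norms{\mx_2^\top\vxi_2}_\infty)$.

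What your route buys is a simpler algorithm and analysis: two blocks instead of three, no support estimate, no matrix inversion or Schur-complement lemma, and an exactly $k$-sparse output with no OLS solve. The paper's route instead produces an explicit support estimate, which is natural for variable selection. It also builds the OLS lemma that the paper reuses in Theorem~\ref{thm:partial-adaptivity} and Lemma~\ref{lem:equiv_err}.

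Two small corrections. First, top-$k$ truncation costs a factor $2$, not $3$ (Lemma~\ref{lem:proper}); by pigeonhole, a dropped support coordinate is always beaten by some kept off-support coordinate. Second, your closing remark overstates Lemma~\ref{lem:equiv_err}, which gives only $\norms{\mx^\top\vxi}_\infty=\Omega(\twonorm{\vxi}/\sqrt n)$, without a $\sqrt{\log d}$ factor. For general sub-Gaussian designs you would therefore pay the same $\sqrt{\log}$ overhead as Theorem~\ref{thm:l_inf_norm_bound_model1}. For the Gaussian design in this statement, your lower-tail bound on the maximum of $d$ i.i.d.\ Gaussians is what makes the comparison tight.
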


We prove Theorem~\ref{thm:l_inf_norm_bound_model1} through a simple three-stage method (Algorithm~\ref{alg:IHT+OLS}). Our algorithm first uses iterative hard thresholding (IHT) \cite{blumensath2009iterative}, an $\ell_2$ sparse recovery algorithm, to obtain a warm start. It then estimates the support via thresholding, and solves ordinary least squares on the learned support.
For general sub-Gaussian measurements, the error of Theorem~\ref{thm:for-each-informal} can exceed $\norms{\mx^\top \vxi}_\infty$, but by at most a logarithmic factor; we give a detailed discussion in Sections~\ref{ssec:phase3} and~\ref{ssec:expand_r}.

To our knowledge, this is the first such polynomial-time estimator which achieves these guarantees for $\ell_\infty$ sparse recovery using $\approx k \log d$ samples.\footnote{Nearly-linear time methods have also been developed based on decoding expander graph-based measurements, but typically require explicit design of the measurement matrix. Moreover, these results often operate in the noiseless setting, or only give $\ell_\infty$ recovery under a minimum signal strength assumption; see Section~\ref{ssec:related} for a discussion.}
In particular, Theorem~\ref{thm:for-each-informal} improves upon the current state-of-the-art (LASSO-based) estimator for Problem~\ref{prob:linfty_sparse} under Model~\ref{model:for_each} from \cite{wainwright2019high}, which requires $\Omega(k^2)$ measurements to achieve a comparable error rate (cf.\ Remark~\ref{remark:compare_error}). 

Finally, Theorem~\ref{thm:l_inf_norm_bound_model1} implies an improved runtime for a recent state-of-the-art algorithm for Bayesian sparse linear regression by \cite{KumarSTZ25}. We discuss this application in more detail in Appendix~\ref{app:sample}.

\paragraph{Adaptive sparse recovery.} Our second main result is a new set of nearly-matching upper and lower bounds for the adaptive model (Model~\ref{model:adv_noise}) of variable selection and $\ell_\infty$ sparse recovery.

\begin{theorem}[informal, see Theorems~\ref{thm:linf_iht},~\ref{thm:hard_inst_lower_bound}, and~\ref{thm:support_lb}]
\label{thm:for-all-informal}
Let $n = \Omega(k \log d)$, and let $\mx \in \R^{n \times d}$ have i.i.d.\ $\Nor(0, \frac 1 n)$ entries.
    There is an estimator which solves Problem~\ref{prob:linfty_sparse} under Model~\ref{model:adv_noise} with high probability when $n = \Omega (k^2 \log \frac d k)$, computable in nearly-linear time.
    Moreover, there is no algorithm which can solve either Problem~\ref{prob:variable_selection} or Problem~\ref{prob:linfty_sparse} under Model~\ref{model:adv_noise} with probability $> \half$ if $n = o(k^2)$.
\end{theorem}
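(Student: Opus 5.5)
The statement has two halves, and I would handle them separately: an upper bound (IHT-type recovery in the adaptive model when $n \gtrsim k^2 \log\frac dk$) and a lower bound (impossibility when $n = o(k^2)$).

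\textbf{Upper bound.} The plan is to run IHT, or an $\ell_\infty$-tailored variant of it, and to control the iteration using only deterministic properties of $\mx$ that hold uniformly over all $(\vths,\vxi)$. The key property is an entrywise incoherence bound: with high probability, $|\langle \mx_{:i}, \mx_{:j}\rangle - \delta_{ij}| \lesssim \sqrt{\log d / n}$ for all pairs $i,j$. When $n \gtrsim k^2\log\frac dk$, this gives
\[
\norm{(\mx^\top\mx - \mi)\vv}_\infty \le \tfrac{c}{\sqrt k}\norm{\vv}_1 \le c\norm{\vv}_\infty \quad\text{for every } 2k\text{-sparse (or } 3k\text{-sparse) } \vv,
\]
with $c$ a small constant. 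Write the IHT step as $\vth^{t+1} = H_k\bigl(\vth^t + \mx^\top(\vy - \mx\vth^t)\bigr)$. Plugging in $\vy$, the argument of $H_k$ equals
\[
\vths + (\mi - \mx^\top\mx)(\vth^t - \vths) + \mx^\top\vxi .
\]
Its distance from $\vths$ in $\ell_\infty$ is therefore at most $c\norm{\vth^t-\vths}_\infty + \norm{\mx^\top\vxi}_\infty$. Hard thresholding onto the top $k$ entries at most doubles the $\ell_\infty$ error relative to a $k$-sparse target, so the map contracts whenever $2c<1$. The iterates then converge to error $O(\norm{\mx^\top\vxi}_\infty)$ in $O(\log)$ iterations, each taking nearly-linear time. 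If the doubling loses too much, I would instead threshold at a data-driven level or finish with a least-squares step on the recovered support.

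\textbf{Lower bound.} I would build two instances that share the same $\mx$ and the same observation $\vy$ but whose supports differ, while both satisfy the SNR condition~\eqref{eq:snr_linfty}. Take $\vths = \lambda \ve_1$ and $\vthp = \lambda\,\mathbf{1}_S/\sqrt{?}$ for a suitable $S$; more concretely, I would pick a $k$-sparse $\vu$ supported off coordinate $1$ with $\mx\vu \approx \mx_{:1}$. Set $\vxi = \mx\vthp - \mx\vths$ so that the two observations coincide. The goal is to make $\norm{\mx^\top\vxi}_\infty$ small compared to $\lambda$.

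The natural choice is $\vu$ equal to the sign pattern of the top-$k$ correlations $\langle \mx_{:1}, \mx_{:j}\rangle$, each of size about $\sqrt{\log d/n}$, appropriately scaled. Then $\vxi = \lambda(\mx_{:1} - \mx\vu)$, and a Gram-matrix computation should give $\norm{\mx^\top\vxi}_\infty \lesssim \lambda\max\bigl(\sqrt{k/n},\, \text{small}\bigr)$, which is much smaller than $\lambda$ whenever $n \gg k$. This is not enough: I need the residual correlation to beat the threshold at scale $k^2$.

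The sharper construction is a symmetric pair. Take $\vths = \lambda \ve_1$ and $\vthp = \lambda\sum_{j\in S} s_j \ve_j/k$, and choose the noise so that $\vy$ equals the midpoint in both worlds. The noise on each side is then $\tfrac{\lambda}{2}\mx(\vthp-\vths)$. Its correlation with column $i$ is $\tfrac\lambda2\bigl[(\mx^\top\mx)(\vthp-\vths)\bigr]_i$, which on coordinate $1$ is roughly $\tfrac\lambda2\bigl(1 - \tfrac1k\sum_j|\langle\mx_{:1},\mx_{:j}\rangle|\bigr)$. That quantity is not small, so the naive scheme fails.

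The real task is therefore to design $\vxi$ lying nearly orthogonal to all columns while still confusing the support. This is the main obstacle, and I expect the paper's proof to use a Gaussian-width or packing argument. What I would try first is a counting/volume argument: the adversary gets to choose among $\binom dk$ supports, and in the regime $n = o(k^2)$ the cross-correlations add up ($k$ terms of size $1/\sqrt n$ give total $k/\sqrt n \gg \sqrt{k}$ in aggregate, compared against scaling $1/\sqrt{k}$ per coordinate). This should let one find two $k$-sparse vectors with disjoint supports such that $\mx^\top\mx(\vths - \vthp)$ has small $\ell_\infty$ norm relative to $\norm{\vths-\vthp}_\infty$. Placing the noise as half the difference then makes the two instances indistinguishable, so no algorithm succeeds with probability $>\half$.
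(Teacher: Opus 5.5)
Your upper bound follows the paper's argument: IHT, a one-step $\ell_\infty$ contraction from control of $\mx^\top\mx-\id$ on sparse vectors, and Lemma~\ref{lem:proper} to absorb the thresholding (Corollary~\ref{cor:one_step_contract}, Theorem~\ref{thm:linf_iht}). You use pairwise incoherence where the paper uses $\ell_\infty$-RIP. Fix your display, though: you need $\norm{(\mx^\top\mx-\id)\vv}_\infty \le \frac{c}{k}\norm{\vv}_1 \le 2c\norm{\vv}_\infty$. That is incoherence $O(1/k)$, which costs $n \gtrsim k^2\log d$; getting $\log\frac dk$ needs the direct union bound of Lemma~\ref{lem:subg_linf_rip}.

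The lower bound, however, is not proved. You correctly isolate the target: a sparse $\vv$ with $\norm{\mx^\top\mx\vv}_\infty \ll \norm{\vv}_\infty$, used as the difference of two signals with identical observations. But you stop at a hoped-for counting argument with no construction. Your heuristic also uses the wrong scale: $k/\sqrt n \gg \sqrt k$ would require $n \ll k$, whereas what matters is $k/\sqrt n$ versus $1$.

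Your first attempt was the right idea, but you misanalyzed it. The bound $\lambda\max(\sqrt{k/n},\cdot)$ you wrote cannot hold for all $n \gg k$, since for $n \gg k^2\log d$ it would contradict your own upper bound. The weights must be of order $\sqrt n/k$, large enough that $\langle \vx_1, \mx\vu\rangle$ cancels $\twonorm{\vx_1}^2$ exactly. With weights $1/k$ (your symmetric attempt) or $1/\sqrt k$, coordinate $1$ of $\mx^\top\vxi$ stays of order $\lambda$, which is why the construction seemed to fail.

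Concretely, fix $S$ with $1 \notin S$ and $|S| = k$, chosen independently of $\mx$ rather than as the top-$k$ correlations, so independence handles every cross term. Let $a_j \defeq \vx_1^\top\vx_j$ and set $\vu_j \defeq \twonorm{\vx_1}^2\sign(a_j)/\sum_{l\in S}|a_l|$ for $j\in S$. Conditionally on $\vx_1$ the $a_j$ are independent, so anticoncentration as in \eqref{eq:anticonc_conclusion} plus a Chernoff bound give $\sum_{l\in S}|a_l| = \Omega(k/\sqrt n)$. Hence $\infnorm{\vu} = O(\sqrt n/k) = o(1)$. The entries of $\mx^\top\mx(\ve_1-\vu)$ are then controlled as follows.

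At coordinate $1$: $[\mx^\top\mx(\ve_1-\vu)]_1 = 0$ exactly.

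At $j\in S$: the entry is $a_j - \vu_j\twonorm{\vx_j}^2 - \langle \vx_j, \sum_{l\in S\setminus\{j\}}\vu_l\vx_l\rangle = O(\sqrt n/k + \sqrt{\log d/k})$. This uses that the common prefactor of $\vu$ is $O(\sqrt n/k)$, and that $\sum_{l\in S\setminus\{j\}}\sign(a_l)\vx_l$ is independent of $\vx_j$ with norm $O(\sqrt k)$.

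At $j\notin S\cup\{1\}$: the entry is $O(\sqrt{\log d/k})$, since $\vx_j \perp \vx_1-\mx\vu$ and $\twonorm{\vx_1-\mx\vu}^2 = O(1+n/k)$.

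These are $o(1)$ precisely because $n = o(k^2)$; together with $n \gtrsim k\log d$, this also forces $\log d = o(k)$. Now take $\vths_1 = \lambda\ve_1$, $\vxi_1 = \lambda\mx(\vu-\ve_1)$ and $\vths_2 = \lambda\vu$, $\vxi_2 = \vzero_n$. Both give the same $\vy$, with $\norm{\mx^\top\vxi_1}_\infty = o(\lambda)$, $\norm{\mx^\top\vxi_2}_\infty = 0$, $\norm{\vths_1-\vths_2}_\infty \ge \lambda$, and different supports. Putting all the noise in one world, instead of splitting it, makes \eqref{eq:snr_linfty} automatic, so the variable-selection bound follows too.

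The paper's Corollary~\ref{cor:sep_lemma} is the exact version of this. It takes $\vv_S = [\mx^\top\mx]^{-1}_{S\times S}\vq$ for a sign vector $\vq\in\{\pm 1\}^S$, so the on-support correlations are exactly $\pm 1$. Lemma~\ref{lem:inf_inf_lower_bound} shows $\norm{[\mx^\top\mx]^{-1}_{S\times S}}_{\infty\to\infty} = \Omega(k/\sqrt n)$ via the first Neumann term, whose heavy row is your sign-weighted row. The instance is then $\vths_2-\vths_1 = \vv$, $\vxi_1 = \mx\vv$, $\vxi_2 = \vzero_n$. Your first-order version has a small advantage: it does not need to control the Neumann tail in $\norm{\cdot}_{\infty\to\infty}$, where an operator-norm bound alone does not suffice.
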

Up to a logarithmic factor of the dimension, Theorem~\ref{thm:for-all-informal} settles both the computational and statistical complexity of adaptive $\ell_\infty$ sparse recovery.
Notably, the sample complexity of both parts of Theorem~\ref{thm:for-all-informal} scales quadratically with $k$, as opposed to the linear-in-$k$ scaling in Theorem~\ref{thm:for-each-informal}, as well as in the standard $\ell_2$ sparse recovery setting.
To our knowledge, no such separation had been previously demonstrated in any similar setting.
We conjecture that the lower bound in Theorem~\ref{thm:for-all-informal} can be extended to any $n = o(k^2 \log \frac d k)$, that is, that the tight measurement complexity is $n = \Theta (k^2 \log \frac d k)$; we leave this interesting problem open for future work.

The upper bound in Theorem~\ref{thm:for-all-informal} is once again achieved via an application of IHT. Our main conceptual contribution in demonstrating this result is to define a new notion of the well-studied restricted isometry property (RIP) which we call $\ell_\infty$-RIP (Definition~\ref{def:linf_RIP}), and to demonstrate that Gaussian (and sub-Gaussian) matrices satisfy this condition with high probability when $n = \Omega (k^2 \log \frac d k)$.

The more technically interesting part of Theorem~\ref{thm:for-all-informal} is the lower bound.
At a high level, we demonstrate that the \emph{inverse} of the Gram matrix $\mx^\top \mx$ restricted to any sufficiently small submatrix has a large $\ell_\infty$ operator norm, unless $n = \Omega(k^2)$ (Lemma~\ref{lem:inf_inf_lower_bound}).
We then exhibit a sparse noise vector which has a very large $\ell_\infty$ norm, but which is effectively killed off by the measurements $\mx$ as long as $n = o(k^2)$, and hence cannot be detected unless we have sufficiently many measurements.

\paragraph{Variable selection with partial adaptivity.} A natural question is whether or not one can circumvent the quadratic lower bounds of the adaptive model in an intermediate setting which interpolates between Models~\ref{model:for_each} and~\ref{model:adv_noise}.
Towards understanding this possibility, we demonstrate nontrivial recovery guarantees for variable selection (Problem~\ref{prob:variable_selection}), in a model we call the \emph{partially adaptive} model, where the noise $\vxi$ is independent of $\mx$, but $\vths$ may be adaptive (Model~\ref{model:for_all}).

We demonstrate in Theorem~\ref{thm:partial-adaptivity} that if the learner is allowed to mask the effect of certain coordinates when querying observations (Algorithm~\ref{alg:obs_gen}), then $\approx k\log d\log k$ measurements suffice to solve variable selection under partial adaptivity. Our algorithm iteratively applies thresholding after masking an estimated support, which we show makes geometric progress on the residual. Although our result holds under a nonstandard observation model, we believe it highlights the possibility of going beyond Theorem~\ref{thm:for-all-informal} even under partial adaptivity. Indeed, a key structural result that we leverage in our algorithm is that threshold-based support learning has few false positives under Model~\ref{model:for_all}; we believe this observation will prove useful in future investigations of the partially-adaptive setting.

\subsection{Related work}\label{ssec:related}

\textbf{$\ell_1$-convex relaxation.}
Since \cite{natarajan1995} showed that $\ell_0$-minimization for linear regression is NP-hard in general, extensive efforts
\cite{tibshirani1996regression, chen2001atomic, candes2005, candes2006, candes2007dantzig}
have focused on $\ell_1$ minimization as a tractable convex surrogate. Under RIP-type assumptions and with measurement complexity $n = \Theta(k \log \frac d k)$, both the Lasso \cite{wainwright2019high} and the Dantzig selector \cite{candes2007dantzig} are known to achieve optimal $\ell_2$ estimation error rates. Beyond $\ell_2$ recovery, \cite{zhao2006model} showed that the Lasso achieves exact support recovery when the nonzero coefficients are sufficiently strong and the measurements satisfy an irrepresentablity condition, requiring $n = \Omega(k \log d)$ in the oblivious model and $n = \Omega(k^2 \log d)$ in the adaptive model.
For $\ell_\infty$ guarantees, \cite{lounici_2008} proved that both the Lasso and the Dantzig selector attain $\approx \sigma$ error under a pairwise incoherence condition, which requires $n = \Omega(k^2)$ (Lemma~\ref{lem:LB_inf_RIP}).
Similarly, \cite{ye2010rate} derived comparable $\ell_\infty$ bounds under an $\ell_\infty$-curvature condition, again with $n = \Omega(k^2 \log d)$. More recently, \cite{wainwright2019high} showed that under mutual incoherence, the Lasso satisfies a refined $\ell_\infty$ error bound which is achieved with $n = \Theta(k \log d)$ in the oblivious model and $n = \Theta(k^2 \log d)$ in the adaptive model for sub-Gaussian designs.
As we show in Remark~\ref{remark:compare_error}, this bound matches the minimax-optimal $\approx \sigma$ $\ell_\infty$ error only when $n = \Omega(k^2)$. 

\textbf{Greedy selection under $\ell_0$ constraints.}
Beginning with seminal works of \cite{mallat1993matching,pati1993orthogonal}, a substantial line of research
\cite{needell2008greedy,needell2008cosamp,cai2011OMP,Jain2011,donoho2012StOMP}
has developed greedy heuristics for approximating the intractable $\ell_0$ minimization problem.
\cite{pati1993orthogonal} introduced Orthogonal Matching Pursuit (OMP), which iteratively selects the column most correlated with the current residual and removes its contribution via orthogonal projection. 
OMP \cite{zhang2011} and its variants, including ROMP \cite{needell2008greedy}, CoSaMP \cite{needell2008cosamp}, and OMPR \cite{Jain2011}, admit similar RIP-based analyses and achieve $\ell_2$ estimation error $\approx \|\vxi\|_2$ with measurement complexity $n=\Omega(k\log^{O(1)} d)$.
These guarantees are comparable, or sometimes slightly weaker, than the sharp bounds obtained for IHT \cite{price2021}.
On the other hand, \cite{cai2011OMP} and \cite{huang2017} establish $\ell_\infty$ recovery guarantees for OMP and its variant SDAR under incoherence-type assumptions and a Gaussian noise model, which lead to a higher measurement requirement of $n=\Omega(k^2\log d)$. These greedy methods typically incur higher computational costs than IHT and our Algorithm~\ref{alg:oblivious-general}, as they require solving a least-squares problem over a size-$k$ support at each iteration, whereas IHT performs only simple gradient descent and thresholding operations and Algorithm~\ref{alg:oblivious-general} only solves OLS once on top of calling IHT.

\textbf{Expander-based methods.}
A parallel line of work
\cite{xu2007efficient,jafarpour2009efficient,indyk2008near,berinde2008practical}
uses sparse binary measurement matrices, typically the adjacency matrices of bipartite expanders, to enable sparse recovery, primarily in noiseless settings.
For example, \cite{xu2007efficient} showed that exact recovery with a for each guarantee (Model~\ref{model:for_each}) is possible using $n=O(k\log d)$ measurements and decoding time $T=O(d\log d)$.
Subsequent work improved efficiency: \cite{sarvotham2006sudocodes} reduced decoding time to $T=O(k\log d\log k)$, while \cite{wu2012optimal} sharpened the measurement complexity to $n=O(k)$ under additional signal model assumptions. Notably, such results require strong control of the measurement matrix.

Despite these favorable guarantees in the noiseless regime, expander-based decoders are generally less robust to noise, and analyses in noisy settings remain limited
\cite{jafarpour2009efficient,acharya2017improved,li2019sub}.
Notably, \cite{jafarpour2009efficient} established robustness for approximately $k$-sparse signals, while \cite{acharya2017improved} showed that $n=\Theta(k^2\log d)$ measurements are both necessary and sufficient for support recovery in the adaptive model under binary measurements.
Furthermore, \cite{li2019sub} provided $\ell_\infty$ guarantees in the oblivious model, but required all signal coordinates to exceed the noise level by a significant margin.
Iterative expander-based algorithms such as EMP \cite{indyk2008near} and SSMP \cite{berinde2008practical} achieve optimal $\ell_2$ recovery in adaptive models, but their computational cost is comparable to greedy methods like IHT and OMP, and substantially higher than that of combinatorial decoders.

\textbf{Lower bounds on support recovery}. Lower bounds for support recovery remain comparatively less explored. Classic works such as \cite{candes2013estimate, scarlett2019introductory, wainwright2009information} establish information-theoretic lower bounds on the $\ell_2$ estimation error via Fano’s inequality, implying that $n \ge k \log \frac d k$ measurements are necessary under their respective scalings. In contrast, under our normalization this line of analysis yields a unified risk lower bound without an explicit measurement constraint (see Appendix~\ref{app:discuss_gaussian_noise}). For support recovery, \cite{wainwright2006sharp} shows that the Lasso consistently identifies the true support under Gaussian designs only if $n = \Omega\left(k \log \frac d k\right)$. This result was extended by \cite{fletcher2009necessary} to maximum-likelihood estimators, yielding a necessary condition $n = \Omega(\tfrac{k \log d/k}{\mathrm{SNR}\cdot\mathrm{MAR}})$, where the denominator quantifies the signal strength. More recently, \cite{gamarnik2017high} identified an additional “all-or-nothing” threshold for support recovery of binary signals, occurring at $n = k \log d \cdot \log \frac{k}{1+\sigma^2}$.

%% file: tex_file/preliminaries.tex
\section{Preliminaries}
\label{sec:prelim}

In this section we develop some preliminaries for the rest of the paper. 
\subsection{Notation}\label{ssec:notation}

\textbf{General notation.} We denote matrices in capital boldface and vectors in lowercase boldface. We define $[n] \defeq \{i \in \N : i \le n\}$. When $S$ is a subset of a larger set clear from context, $S^c$ denotes its complement. For random variables $X$ and $Y$, $X\perp Y$ denotes that $(X, Y)$ are independent. For an event $\calE$, we use $\ind_{\calE}$ to denote its associated $0$-$1$ indicator variable.

To simplify expressions, we henceforth assume $k$ is at least a sufficiently-large universal constant in future proofs. 
We also always let $\{\vx_i \in \R^n\}_{i \in [d]}$ denote the columns of the measurement matrix $\mx$ when clear from context.

\textbf{Vectors.} For a vector $\vv \in \bbR^d$, we let $\supp(\vv) = \cbra{i \in [d]: \vv_i \neq 0}$ and let $\ktwonorm{\vv}$ be the $\ell_2$-norm of its top-$k$ largest elements in the absolute value. We let $\vzero_d$ and $\vone_d$ denote the all-zeroes and all-ones vectors in $\R^d$. We also define $\ve_i$ as the $i^{\text{th}}$ standard basis vector.

For $\vv \in \R^d$ and $k \in [d]$, we let $H_k(\vv) \in \R^d$ keep the $k$ largest coordinates of $\vv$ by magnitude (breaking ties in lexicographical order), i.e., the ``head,'' and set all other coordinates to $0$.

\textbf{Matrices.} We let $\id_d$ denote the $d \times d$ identity matrix, and $\id_S$ is the identity on $\R^S$ for an index set $S$. For $p \ge 1$ (including $p = \infty$), applied to a vector argument, $\norm{\cdot}_p$ denotes the $\ell_p$ norm. For $p, q \ge 1$ and a matrix $\mm \in \R^{n \times d}$, we also use the notation
$\norm{\mm}_{p \to q} \defeq \max_{\substack{\vv \in \R^d \mid \norm{\vv}_p \le 1}} \norm{\mm \vv}_q$. We use $\normf{\cdot}$ and $\normop{\cdot}$ to denote the Frobenius and ($2 \to 2$) operator norms of a matrix argument.

A helpful observation used throughout is that $\norm{\cdot}_{\infty \to \infty}$ is the largest $\ell_1$ norm of a row.

\textbf{Indexing.} For any $i \in [d]$, $\vv \in \R^d$, $\mx\in\R^{n\times d}$, we let $\vv_{-i} \in \R^{d-1}$ drop the $i^{\text{th}}$ element of $\vv$ and $\mx_{:-i}\in \R^{n \times (d-1)}$ drop the $i^{\text{th}}$ column of $\mx$. For a vector $\vv \in \R^d$ and $S \subseteq [d]$, we use $\vv_S \in \R^S$ to denote its restriction to its coordinates in $S$. We let $\nnz(\cdot)$ denote the number of nonzero entries in a matrix or vector argument.  For $\mm \in \R^{n \times d}$, we use $\mm_{i:}$ to denote its $i^{\text{th}}$ row for $i \in [n]$, and $\mm_{:j}$ to denote its $j^{\text{th}}$ column for $j \in [d]$. When $S \subseteq [n]$ and $T \subseteq [d]$ are row and column indices, we let $\mm_{S \times T}$ be the submatrix indexed by $S$ and $T$; if $T = [d]$ we simply use $\mm_{S:}$ and similarly, we define $\mm_{:T}$. We fix the convention that transposition is done prior to indexing, i.e., $\mm_{S\times T}^\top \defeq [\mm^\top]_{S\times T}$.

\textbf{Sub-Gaussian distributions.}
We give a simplified introduction to sub-Gaussian distributions. For a detailed discussion, we refer to Chapter 2 and 3 of \cite{vershynin2018high}. 

\begin{definition}[sub-Gaussian distribution]\label{def:subg}
We say that a random variable $X \in \R$ is $\sigma^2$-sub-Gaussian with mean $\mu$, which we denote by $X \sim \subg(\mu, \sigma^2)$, if $\E[X] = \mu$, and
\[
\E\Brack{\exp\Par{\lam \Par{X - \mu}}} \le \exp\Par{\frac{\lam^2 a^2}{2}},\text{ for all } \lam \in \R.
\]
\end{definition}

The following facts will be very helpful in manipulating sub-Gaussian random variables. The first follows simply by applying Definition~\ref{def:subg} and applying independence appropriately.

\begin{fact}\label{fact:composition}
If $X \sim \subg(\mu, \sig^2)$ and $a \in \R$, then $aX \sim \subg(a\mu, a^2\sig^2)$, and if $Y \sim \subg(\nu, \tau^2)$ where $X \perp Y$, then $X + Y \sim \subg(\mu + \nu, \sig^2 + \tau^2)$.
\end{fact}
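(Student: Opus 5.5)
The statement is Fact~\ref{fact:composition}, and I will verify it directly from the moment generating function bound in Definition~\ref{def:subg}, reading the parameter $a$ there as $\sig$. The plan has two parts: handle scaling first, then handle sums using independence. Neither step needs anything beyond linearity of expectation and the factorization of expectations of independent random variables.

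\textbf{Scaling.} Linearity gives $\E[aX] = a\mu$. For any $\lam \in \R$, write $\lam(aX - a\mu) = (\lam a)(X - \mu)$ and apply the defining inequality for $X$ at the parameter $\lam a$. This yields
\[
\E\Brack{\exp\Par{\lam(aX - a\mu)}} \le \exp\Par{\frac{\lam^2 a^2 \sig^2}{2}},
\]
which is exactly the condition for $aX \sim \subg(a\mu, a^2\sig^2)$. The case $a = 0$ is trivial and is also covered by this computation.

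\textbf{Sums.} Linearity gives $\E[X+Y] = \mu + \nu$. Since $X \perp Y$, the variables $\exp(\lam(X-\mu))$ and $\exp(\lam(Y-\nu))$ are independent and nonnegative. Their joint expectation therefore factors, which is legitimate for nonnegative variables even before finiteness is known. Bounding each factor by its sub-Gaussian bound gives
\[
\E\Brack{\exp\Par{\lam(X+Y-\mu-\nu)}} \le \exp\Par{\frac{\lam^2 \sig^2}{2}}\exp\Par{\frac{\lam^2\tau^2}{2}} = \exp\Par{\frac{\lam^2(\sig^2+\tau^2)}{2}}.
\]
This is the defining condition for $X + Y \sim \subg(\mu+\nu, \sig^2+\tau^2)$.

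The only point needing care is this factorization step, and independence settles it.
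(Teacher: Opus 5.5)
Your proof is correct and matches the paper's one-line justification. You apply Definition~\ref{def:subg} at the rescaled parameter $\lam a$ for the scaling claim, and you factor the moment generating function using independence for the sum (reading the stray $a$ in that definition as $\sig$, as intended).
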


\begin{lemma}[Hoeffding's inequality, Theorem 2.2.1, \cite{vershynin2018high}]\label{lem:hoeffding}
If $X \sim \subg(\mu, \sig^2)$, then for all $t \ge 0$,
\[\Pr\Brack{\Abs{X - \mu} \ge t} \le 2\exp\Par{-\frac{t^2}{2\sig^2}}.\]
\end{lemma}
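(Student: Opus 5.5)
The plan is the standard Chernoff (exponential moment) argument applied to each tail separately, followed by a union bound. Throughout, I read the parameter $a$ in Definition~\ref{def:subg} as $\sig$, so the hypothesis $X \sim \subg(\mu, \sig^2)$ gives $\E[\exp(\lam(X - \mu))] \le \exp(\frac{\lam^2 \sig^2}{2})$ for every $\lam \in \R$. The case $t = 0$ is trivial since the right-hand side is $2 \ge 1$, so assume $t > 0$.

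\textbf{Upper tail.} Fix $\lam > 0$. The map $x \mapsto \exp(\lam x)$ is increasing and nonnegative, so Markov's inequality gives
\[
\Pr\Brack{X - \mu \ge t} = \Pr\Brack{\exp\Par{\lam(X - \mu)} \ge \exp(\lam t)} \le \exp(-\lam t)\,\E\Brack{\exp\Par{\lam(X-\mu)}} \le \exp\Par{\frac{\lam^2\sig^2}{2} - \lam t}.
\]
The exponent is a quadratic in $\lam$, minimized at $\lam = \frac{t}{\sig^2} > 0$. Substituting this choice gives $\Pr[X - \mu \ge t] \le \exp(-\frac{t^2}{2\sig^2})$.

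\textbf{Lower tail.} By Fact~\ref{fact:composition} with $a = -1$, we have $-X \sim \subg(-\mu, \sig^2)$. Equivalently, the hypothesis at $-\lam$ bounds the moment generating function of $-(X - \mu)$ in the same way. Repeating the upper-tail argument for $-X$ gives $\Pr[X - \mu \le -t] \le \exp(-\frac{t^2}{2\sig^2})$.

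\textbf{Combining.} The event $\{\Abs{X - \mu} \ge t\}$ is the union of the two events above. A union bound therefore yields the factor $2$ and the claimed bound
\[
\Pr\Brack{\Abs{X - \mu} \ge t} \le 2\exp\Par{-\frac{t^2}{2\sig^2}}.
\]

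There is no genuine obstacle here. The only steps that need care are choosing $\lam = t/\sig^2$ (which requires $t > 0$, hence the separate treatment of $t = 0$) and noting that the moment generating function bound is assumed for all $\lam \in \R$. That two-sided assumption is exactly what lets the lower tail follow by symmetry.
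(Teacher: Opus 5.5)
Your proof is correct. The paper does not prove this lemma itself and only cites Vershynin; the standard argument behind that citation is exactly yours: Markov's inequality applied to $\exp(\lambda(X-\mu))$ with $\lambda = t/\sigma^2$, the lower tail via $-X$, and a union bound. Reading the stray parameter $a$ in Definition~\ref{def:subg} as $\sigma$ is the intended interpretation.
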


\begin{lemma}[Proposition 2.6.1, \cite{vershynin2018high}]
\label{lem:subg_property}
Each of these statements implies each of the others, for some constants $C_1, C_2 > 0$ (where the constants may change in the different directions of implication).
\begin{enumerate}
    \item $X \sim \subg(\mu, \sigma^2)$.
    \item $\E[X] = \mu$, and for any $p\in \N$, $\expect{|X - \mu|^p} \le (C_1\sigma\sqrt{p})^p$.\label{item:moments}
    \item $\E[X]  = \mu$, and $\E[\exp(\frac{(X-\mu)^2}{C_2 \sigma^2})] \le C_2$.\footnote{In \cite{vershynin2018high}, Item~\ref{item:mgfsquare} is listed with a bound of $2$ on the right-hand side. However, examining the proof shows any constant bound suffices for concluding sub-Gaussianity (through an appropriate tail bound).}\label{item:mgfsquare}
\end{enumerate}
\end{lemma}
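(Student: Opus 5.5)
Throughout we reduce to $\mu = 0$ and $\sigma = 1$: replace $X$ by $(X-\mu)/\sigma$. All three conditions are invariant under this affine change (for the first, use Fact~\ref{fact:composition}), so only the constants need tracking. We then prove the cycle $1 \Rightarrow 2 \Rightarrow 3 \Rightarrow 1$.

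\textbf{Step $1 \Rightarrow 2$.} Lemma~\ref{lem:hoeffding} gives the tail bound $\Pr[|X| \ge t] \le 2\exp(-t^2/2)$. Integrating the tail,
\[
\E|X|^p = \int_0^\infty p t^{p-1}\Pr[|X|\ge t]\,dt \le 2p\int_0^\infty t^{p-1}e^{-t^2/2}\,dt = p\,2^{p/2}\,\Gamma(p/2).
\]
Stirling's bound $\Gamma(p/2) \le (p/2)^{p/2}$ (up to constants) then yields $\E|X|^p \le (C_1\sqrt p)^p$. The $p$-th root of $p$ is bounded, so it is absorbed into $C_1$.

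\textbf{Step $2 \Rightarrow 3$.} Expand the exponential as a power series and use the even moments, $\E X^{2q} \le (C_1^2\, 2q)^q$, together with $q! \ge (q/e)^q$:
\[
\E\exp(\lambda X^2) = \sum_{q\ge 0}\frac{\lambda^q\,\E X^{2q}}{q!} \le \sum_{q \ge 0} (2e C_1^2 \lambda)^q .
\]
Choosing $\lambda = 1/C_2$ with $C_2 \ge 4eC_1^2$ makes the ratio at most $\half$. The series is then at most $2 \le C_2$.

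\textbf{Step $3 \Rightarrow 1$.} This is the step needing the most care, since the bound is an arbitrary constant $C_2$ rather than $2$. We bound the moment generating function in two regimes of $\lambda$.

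\emph{Small $|\lambda|$.} Use the elementary inequality $e^x \le x + e^{x^2}$ (valid for all real $x$) with $x = \lambda X$. Taking expectations and using $\E X = 0$ gives
\[
\E e^{\lambda X} \le \E e^{\lambda^2 X^2}.
\]
For $\lambda^2 \le 1/C_2$, Jensen's inequality applied to the concave map $u \mapsto u^{\lambda^2 C_2}$ gives
\[
\E e^{\lambda^2X^2} \le \Par{\E e^{X^2/C_2}}^{\lambda^2 C_2} \le \exp(\lambda^2 C_2\ln C_2).
\]
This is of the required form $e^{c\lambda^2}$.

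\emph{Large $|\lambda|$, i.e.\ $\lambda^2 > 1/C_2$.} By AM--GM, $\lambda X \le \lambda^2 C_2/2 + X^2/(2C_2)$. Combining this with Jensen,
\[
\E e^{\lambda X} \le e^{\lambda^2C_2/2}\,\Par{\E e^{X^2/C_2}}^{1/2} \le e^{\lambda^2 C_2/2}\sqrt{C_2}.
\]
Since $\lambda^2 C_2 > 1$ in this regime, we have $\sqrt{C_2} \le \exp(\lambda^2 C_2 \ln C_2)$, so the constant prefactor is absorbed into the exponent.

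Both regimes give $\E e^{\lambda X} \le \exp(\lambda^2 a^2/2)$ with $a^2 = O(C_2\ln C_2 + C_2)$. This is item 1 with a constant-factor change in the variance proxy, as permitted by the statement.
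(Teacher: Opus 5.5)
Your proof is correct. The paper itself gives no proof: it cites Proposition 2.6.1 of \cite{vershynin2018high}, and a footnote remarks that the bound $2$ in Item~3 can be replaced by any constant ``through an appropriate tail bound.'' Concretely, Markov's inequality turns Item~3 into $\Pr[|X-\mu|\ge t]\le C_2\exp(-t^2/(C_2\sigma^2))$. One then reruns the textbook chain (tails $\Rightarrow$ moments $\Rightarrow$ MGF of $X^2$ $\Rightarrow$ MGF of $X$), and the prefactor $C_2$ in place of $2$ only changes constants.

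Your steps $1\Rightarrow 2$ and $2\Rightarrow 3$ are exactly the textbook arguments. Your $3\Rightarrow 1$ keeps the textbook's two-regime MGF argument but handles the arbitrary constant differently. Rather than detouring through tails and moments, you use Jensen to get the needed control of $\E e^{\lambda^2 X^2}$ (small $|\lambda|$) and of $\E e^{X^2/(2C_2)}$ (large $|\lambda|$) directly from Item~3. This is shorter and gives an explicit variance proxy $C_2(1+2\ln C_2)\sigma^2$. The footnote's route has the merit of reusing the cited implications verbatim.

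Three small points should be made explicit:
\begin{itemize}
\item The absorption steps need $\ln C_2\ge 0$. This holds automatically, since $C_2\ge \E e^{X^2/C_2}\ge 1$.
\item In $2\Rightarrow 3$, take $C_2=\max\{2,4eC_1^2\}$ so that the final inequality $2\le C_2$ is valid.
\item Your $3\Rightarrow 1$ yields sub-Gaussianity with proxy $O(\sigma^2)$ rather than exactly $\sigma^2$. That is the intended reading, and it is how the paper uses the lemma (e.g., in Lemma~\ref{lem:anticonc}).
\end{itemize}
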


\begin{lemma}\label{lem:anticonc}
    If $X \sim \subg(0, C\sigma^2)$ has variance $\sigma^2$ for some constant $C > 0$, then there exist constants $c_1, c_2 > 0$ such that $|X| \sim \subg(\mu, c_2\sig^2)$ for $\mu \ge c_1 \sig$.
\end{lemma}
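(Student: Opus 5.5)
We want to show that if $X \sim \subg(0, C\sigma^2)$ with $\Var(X) = \sigma^2$, then $|X|$ is $c_2\sigma^2$-sub-Gaussian around its mean $\mu \defeq \E|X|$, and that $\mu \ge c_1 \sigma$. The plan is to handle the two claims separately: sub-Gaussianity of $|X|$ comes from the moment characterization in Lemma~\ref{lem:subg_property}, and the lower bound on $\mu$ comes from a Paley--Zygmund / Hölder anticoncentration argument using the fourth moment.

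\textbf{Sub-Gaussianity of $|X|$.} By Lemma~\ref{lem:subg_property}, Item~\ref{item:moments}, applied to $X$ with variance proxy $C\sigma^2$, we get $\E|X|^p \le (C_1\sqrt{C}\sigma\sqrt p)^p$ for all $p \in \N$. For the centered variable, use $\big||X|-\mu\big| \le |X| + \mu$ together with Minkowski's inequality and Jensen's inequality ($\mu \le (\E|X|^p)^{1/p}$). This gives
\[
\Par{\E\Abs{|X|-\mu}^p}^{1/p} \le 2\Par{\E|X|^p}^{1/p} \le 2C_1\sqrt{C}\,\sigma\sqrt p .
\]
Since $\E[|X| - \mu] = 0$, the reverse implication in Lemma~\ref{lem:subg_property} then yields $|X| \sim \subg(\mu, c_2\sigma^2)$ with $c_2$ depending only on $C$. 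The only subtlety is that the constants change between the equivalent conditions, which is allowed by the lemma.

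\textbf{Lower bound on the mean.} Bound $\E|X|$ from below by interpolating $\E X^2$ between $\E|X|$ and $\E X^4$. By Cauchy--Schwarz applied to $X^2 = |X|^{1/2}|X|^{3/2}$,
\[
\sigma^2 = \E X^2 \le \Par{\E|X|}^{1/2}\Par{\E|X|^3}^{1/2}.
\]
Alternatively, Hölder gives $\E X^2 \le (\E|X|)^{2/3}(\E X^4)^{1/3}$. Either way, the moment bound controls the higher moment: $\E X^4 \le (2C_1\sqrt C\sigma)^4$. Rearranging,
\[
\E|X| \ge \frac{(\E X^2)^{3/2}}{(\E X^4)^{1/2}} \ge \frac{\sigma^3}{(2C_1\sqrt{C})^2\sigma^2} = c_1\sigma.
\]

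The main point needing care is this second step. It is what uses the hypothesis that the variance exactly equals $\sigma^2$, so that the sub-Gaussian proxy is only a constant factor larger than the variance; without this, $\E|X|$ could be arbitrarily small relative to the proxy. Beyond that, the argument is just routine tracking of constants.
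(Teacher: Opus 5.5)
Your proof is correct and follows essentially the same plan as the paper: a fourth-moment anticoncentration bound for $\E|X|$, plus an equivalent characterization from Lemma~\ref{lem:subg_property} to get the sub-Gaussian tail of $|X|$. The differences are only in the tools. The paper lower bounds $\E|X|$ by applying Paley--Zygmund to $X^2$, whereas your H\"older interpolation $\E X^2 \le (\E|X|)^{2/3}(\E X^4)^{1/3}$ bounds $\E|X|$ directly. The paper checks sub-Gaussianity through Item~\ref{item:mgfsquare}, using $(|X|-\E|X|)^2 \le 2X^2 + 2(\E|X|)^2$ and $(\E|X|)^2 \le \sigma^2$, whereas you use the moment condition together with Minkowski and Jensen.
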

\begin{proof}
    By Item~\ref{item:moments} in Lemma~\ref{lem:subg_property}, we know that $\expect{X^4} = O(\sigma^4)$. Applying the Paley-Zygmund inequality to $Y = X^2$, there exists a constant $c > 0$ such that 
    \begin{equation*}
        \prob{X^2 \ge \tfrac12\expect{X^2}} \ge \frac{\expect{X^2}^2}{4\expect{X^4}} \ge c.
    \end{equation*}
    Hence, $\expect{|X|} \ge c_1\sigma$ for some $c_1 > 0$, proving the claim about the mean. Next, by Item~\ref{item:mgfsquare} in Lemma~\ref{lem:subg_property}, we know that there exists some $C_2 > 0$ such that 
    \begin{align*}
    \E\Brack{\exp\Par{\frac{\Par{|X| - \E|X|}^2}{2C_2\sig^2}}} \le  \E\Brack{\exp\Par{\frac{2X^2 + 2(\E|X|)^2}{2C_2\sig^2}}} \le C_2\exp\Par{\frac{1}{C_2}}.
    \end{align*}
    Thus, there is a constant such that Item~\ref{item:mgfsquare} holds, so the sub-Gaussian parameter is $O(\sig^2)$.
\end{proof}

\subsection{Sparse recovery preliminaries}\label{ssec:sparse_recovery}

In this section, we recall some standard results from the sparse recovery literature.

\textbf{Regularity assumptions for sparse recovery.} We introduce two structural properties of $\mx$ that are commonly used to make sparse recovery tractable. 
For a more detailed introduction to these properties, we refer the reader to Chapter 7 of~\cite{wainwright2019high}.

\begin{definition}[Restricted isometry property]
    \label{def:RIP}
    Let $(\eps, s) \in (0, 1) \times [d]$.
    We say $\mx \in \bbR^{n\times d}$ satisfies the $(\epsilon, s)$-restricted isometry property, or $\mx$ is $(\epsilon, s)$-$\RIP$, if for all $\vth \in \bbR^d$ with $\nnz(\vth) \leq s$, 
    \begin{equation*}
        (1 - \epsilon) \twonorm{\vth}^2 \leq \twonorm{\mx\vth}^2 \leq (1 + \epsilon) \twonorm{\vth}^2.
    \end{equation*}
    An equivalent condition is that $\vlam([\mx^\top \mx]_{S \times S}) \in [1 - \eps, 1 + \eps]^s$ for all $S \subseteq [d]$ with $|S| \le s$.
\end{definition}
Intuitively, RIP implies $\mx$ acts as an
approximate isometry on sparse vectors, so $[\mx^\top \mx]_{S\times S}$ is well-conditioned for any sparse support $S$. It is well-known that Definition~\ref{def:RIP} is satisfied by various random matrix ensembles. In this paper, we primarily focus on $\mx$ drawn from sub-Gaussian ensembles, which we formally define here for ease of reference.

\begin{model}\label{model:subg}
Let $\mx \in \R^{n\times d}$ have i.i.d.\ entries $\sim \subg(0, \tfrac C n)$, with variance $\tfrac 1 n$, for a constant $C > 0$.
\end{model}

Next, we state a useful property of sub-Gaussian matrix ensembles.

\begin{proposition}[Theorem 9.2, \cite{foucart13}]
    \label{prop:RIP_sub_gaussian}
Let $\delta, \eps \in (0, \half)^2$. Under Model~\ref{model:subg}, for any $s \in [d]$, $\mx$ is $(\eps, s)$-RIP with probability $\ge 1 - \delta$ if, for an appropriate constant,
    \begin{equation*}
        n = \Omega\Par{\frac{s\log\frac{d}{s} + \log\frac{1}{\delta}}{\epsilon^2}}.
    \end{equation*}
\end{proposition}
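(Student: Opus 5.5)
We would prove Proposition~\ref{prop:RIP_sub_gaussian} by the standard three-step route: concentration for a single fixed vector, an $\eps$-net over each fixed support, and a union bound over supports. Throughout we use the equivalent formulation in Definition~\ref{def:RIP}. For each $S \subseteq [d]$ with $|S| = s$, set $\ma_S \defeq [\mx^\top \mx]_{S\times S} - \id_S$. It suffices to show $\normop{\ma_S} \le \eps$ simultaneously for all such $S$. Supports of size $< s$ are principal submatrices of size-$s$ ones, so they are covered automatically, assuming $s \le d$.

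\emph{Step 1 (fixed vector).} Fix a unit $\vth \in \R^d$. The $j^{\text{th}}$ row gives $(\mx\vth)_j = \sum_i \mx_{ji}\vth_i$. By Fact~\ref{fact:composition}, this is $\subg(0, \frac{C}{n})$, has variance exactly $\frac 1 n$, and the $n$ rows are independent. Hence
\[
n\twonorm{\mx\vth}^2 - n = \sum_{j\in[n]} \Par{n(\mx\vth)_j^2 - 1}
\]
is a sum of $n$ i.i.d.\ centered random variables. Each has $\psi_1$-norm $O(1)$, because the square of a sub-Gaussian variable is sub-exponential; this follows from Item~\ref{item:mgfsquare} of Lemma~\ref{lem:subg_property}. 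We then apply Bernstein's inequality for sub-exponential sums (Vershynin, Theorem 2.8.1). Since $t \le \eps < \half$, the quadratic regime applies, giving
\[
\Pr\Brack{\Abs{\twonorm{\mx\vth}^2 - 1} \ge t} \le 2\exp\Par{-c n t^2}.
\]

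\emph{Step 2 (net on a fixed support).} Fix $S$ and let $\calN_S$ be a $\frac14$-net of the unit sphere in $\R^S$, which can be chosen with $|\calN_S| \le 9^s$. Since $\ma_S$ is symmetric, the standard net argument gives
\[
\normop{\ma_S} \le 2\max_{\vv\in\calN_S}\Abs{\vv^\top \ma_S \vv}.
\]
Applying Step 1 with $t = \frac\eps2$ to each net point and taking a union bound over $\calN_S$ yields
\[
\Pr\Brack{\normop{\ma_S} > \eps} \le 2\cdot 9^s \exp\Par{-cn\eps^2/4}.
\]

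\emph{Step 3 (union over supports).} There are $\binom{d}{s} \le (\frac{ed}{s})^s$ supports. The total failure probability is therefore at most
\[
2\exp\Par{s\log\tfrac{9ed}{s} - \tfrac{c}{4}n\eps^2},
\]
which is at most $\delta$ once $n \ge \frac{C'}{\eps^2}\Par{s\log\frac{d}{s} + s + \log\frac1\delta}$. To absorb the additive $s$ term into $s\log\frac ds$, we need $\log\frac ds$ to be bounded below by a constant. When $s \le d/2$ this holds since $\log\frac ds \ge \log 2$, so the $s$ term is absorbed after adjusting constants. When $s > d/2$, the proposition's bound should be read with the convention $\log\frac{d}{s}$ replaced by $\max(1,\log\frac ds)$, which is how the cited result is stated up to constants.

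The main obstacle is Step 1. It requires the sub-exponential Bernstein bound, which has not been stated in the paper, so we import it from \cite{vershynin2018high}. It also requires checking that $n(\mx\vth)_j^2 - 1$ has $O(1)$ sub-exponential norm uniformly in $\vth$; this uses only the variance normalization $\frac1n$ and the sub-Gaussian parameter $\frac Cn$ from Model~\ref{model:subg}. Steps 2 and 3 are routine covering and counting arguments.
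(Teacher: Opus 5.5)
Your proof is correct. It is the standard argument behind the cited Theorem 9.2 of \cite{foucart13}, which the paper invokes without a proof of its own: concentration of $\twonorm{\mx\vth}^2$ for a fixed vector via sub-exponential Bernstein, a $\frac14$-net on each support, and a union bound over the $\binom{d}{s}$ supports. Your caveat that the bound must be read with $\log\frac{ed}{s}$ (equivalently $\max(1,\log\frac ds)$) is also right: as literally stated the proposition fails for $s$ near $d$, since $s=d$ would require only $n=\Omega(\eps^{-2}\log\frac1\delta)$, whereas $(\eps,d)$-RIP forces $n\ge d$.
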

For more RIP matrix ensembles, including those based on sampling bounded orthonormal systems and real trignometric polynomials, we refer the reader to Chapter 12 of \cite{foucart13}.

Another common condition for sparse recovery is through the lens of (approximate) orthogonality.

\begin{definition}[Pairwise incoherence]
    \label{def:PI}
    Let $\alpha \in (0, 1)$.
    We say $\mx \in \bbR^{n\times d}$ with columns $\{\vx_i\}_{i \in [d]}$ satisfies $\alpha$-pairwise incoherence, or $\mx$ is $\alpha$-PI, if 
    \begin{equation*}
        \max_{(i, j) \in [d] \times [d]} \Abs{\Brack{\mx^\top\mx - \id}_{ij}} \leq \alpha.
    \end{equation*}
\end{definition}

\begin{proposition}[Lemma 6.26, \cite{wainwright2019high}]\label{prop:pi}
Let $\alpha, \delta \in (0, \half)^2$. Under Model~\ref{model:subg}, $\mx$ is $\alpha$-PI with probability $\ge 1 - \delta$ if, for an appropriate constant,
    \begin{equation*}
        n = \Omega\Par{\frac{\log \frac d \delta}{\alpha^2}}.
    \end{equation*}
\end{proposition}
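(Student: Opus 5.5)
The plan is a union bound over the $d^2$ entries of $\mx^\top\mx - \id$, with each entry controlled by a Bernstein-type tail bound for sums of independent sub-exponential variables. Write $\mx_{ri}$ for the $(r,i)$ entry of $\mx$, so that for $(i,j) \in [d]\times[d]$,
\[
\Brack{\mx^\top \mx - \id}_{ij} = \sum_{r \in [n]} \Par{\mx_{ri}\mx_{rj} - \ind_{i=j}\cdot\tfrac1n}.
\]
Under Model~\ref{model:subg}, for fixed $(i,j)$ the $n$ summands are independent, since they depend on disjoint rows. Each summand has mean zero:
\begin{itemize}
\item for $i \neq j$ this is because $\mx_{ri}\perp\mx_{rj}$ and both have mean zero;
\item for $i=j$ it is because $\E[\mx_{ri}^2] = \frac1n$ by the variance normalization in Model~\ref{model:subg}.
\end{itemize}

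\textbf{Step 1 (sub-exponential summands).} A product of two $\subg(0, \frac Cn)$ variables, dependent or not, is sub-exponential. To see this, I would use the moment characterization in Item~\ref{item:moments} of Lemma~\ref{lem:subg_property}. By Cauchy--Schwarz,
\[
\E\Brack{\Abs{\mx_{ri}\mx_{rj}}^p} \le \sqrt{\E\Brack{\mx_{ri}^{2p}}\,\E\Brack{\mx_{rj}^{2p}}} \le \Par{C'\cdot\tfrac{p}{n}}^p.
\]
Hence each centered summand $Z_r$ satisfies $\E|Z_r|^p \le (C''p/n)^p$; centering costs at most a constant factor. Equivalently, $Z_r$ has sub-exponential (Orlicz $\psi_1$) norm $K = O(\frac1n)$.

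\textbf{Step 2 (Bernstein).} Apply Bernstein's inequality for sums of independent mean-zero sub-exponential variables (Theorem 2.8.1 of \cite{vershynin2018high}):
\[
\Pr\Brack{\Abs{\sum_{r\in[n]} Z_r} \ge \alpha} \le 2\exp\Par{-c\min\Par{\frac{\alpha^2}{nK^2}, \frac{\alpha}{K}}} = 2\exp\Par{-c' n\min(\alpha^2,\alpha)} = 2\exp\Par{-c' n\alpha^2}.
\]
The last equality holds because $\alpha < \half$. This step also supplies what was needed in Step 1: the moment bound there implies the moment-generating-function bound that Bernstein's inequality requires.

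\textbf{Step 3 (union bound).} Union bounding over at most $d^2$ pairs $(i,j)$, the failure probability is at most $2d^2\exp(-c'n\alpha^2)$. This is at most $\delta$ once
\[
n \ge \frac{\log(2d^2/\delta)}{c'\alpha^2},
\]
and the right-hand side is $O\Par{\log\frac d\delta/\alpha^2}$, which is the claimed bound.

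\textbf{Main obstacle.} The main obstacle is that the preliminaries only state Hoeffding-type sub-Gaussian facts, while the summands here are merely sub-exponential. I would therefore import Bernstein's inequality and the product bound $\norm{XY}_{\psi_1}\le\norm{X}_{\psi_2}\norm{Y}_{\psi_2}$ directly from \cite{vershynin2018high}. I would also be careful to observe that the diagonal terms are correctly centered only because the variance is exactly $\frac1n$, not merely bounded by the sub-Gaussian parameter $\frac Cn$.
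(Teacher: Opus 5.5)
Your proof is correct. You center each entry (using the exact variance $\frac1n$ on the diagonal), bound each summand's sub-exponential norm by $O(\frac1n)$, apply Bernstein to get a tail of $2\exp(-c'n\alpha^2)$ for $\alpha<\half$, and union bound over the $d^2$ entries; this is the standard argument for this fact. The paper gives no proof of its own and imports the statement as a black box from Lemma 6.26 of \cite{wainwright2019high}, so there is no separate route to compare against.
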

\textbf{$\ell_2$ sparse recovery in nearly-linear time.} Our algorithms use $\ell_2$ sparse recovery, an extensively studied primitive, as a subroutine. In Algorithm~\ref{alg:IHT}, we recall one famous sparse recovery algorithm, iterative hard thresholding (IHT) \cite{blumensath2009iterative}, whose $\ell_2$ error guarantees are well-understood.

\begin{algorithm}[ht]
\DontPrintSemicolon
    \caption{$\IHT(\mx, \vy, k, R, r)$}
    \label{alg:IHT}
    $\vth^{(0)} \gets \vzero_d$\;
    \If{$r \ge R$}{\Return {$\vth^{(0)}$}}
    $T \gets \lceil\log_2 \frac{R}{r}\rceil$\;
    \For {$t = 0, 1, \ldots, T-1$}{
    $\vth^{(t+1)} \gets H_k\paren{\vth^{(t)} + \mx^\top (\vy - \mx \vth^{(t)})}$\;
    }
    \Return {$\vth \gets \vth^{(T)}$}
\end{algorithm}

\begin{lemma}[Theorem 4.8, \cite{price2021}]
    \label{lem:IHT_l2_recover}
    In Problem~\ref{prob:linfty_sparse}, if $\mx$ is $(0.14, 3k)$-RIP, then for any  $R \ge \twonorm{\vths}$, the output $\vth$ of Algorithm~\ref{alg:IHT} satisfies\footnote{A minor difference is that \cite{price2021} derives a bound in terms of $\twonorm{\vxi}$, whereas we require a bound on $\norm{\mx^\top \vxi}_{2,3k}$. This mismatch can be resolved by refining Lemma 4.7 of \cite{price2021}: instead of applying the coarse bound $\twonorm{\mx^\top_{S:}\vxi} \le \normop{\mx_{S:}^\top}\twonorm{\vxi}$, one can directly control $\twonorm{\mx^\top_{S:}\vxi}$ via the tighter quantity $\norm{\mx^\top \vxi}_{2,3k}$.} 
    \begin{equation*}
        \twonorm{\vth - \vths} \leq r + 5\norm{\mx^\top \vxi}_{2,3k} \le r + 5\sqrt{3k}\infbound,\quad \Abs{\supp(\vth)} \le k.
    \end{equation*}
\end{lemma}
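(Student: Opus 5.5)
The plan is the standard one-step contraction analysis of IHT under RIP. We track $e_t \defeq \twonorm{\vth^{(t)} - \vths}$ and aim for the recursion $e_{t+1} \le \half e_t + 2\norm{\mx^\top\vxi}_{2,3k}$; the main point is to route the noise only through $3k$-sparse restrictions, which produces $\norm{\mx^\top \vxi}_{2,3k}$ instead of $\twonorm{\vxi}$. Two easy facts come first. If $r \ge R$, the output is $\vzero_d$ and the error is $\twonorm{\vths} \le R \le r$. In every case $\Abs{\supp(\vth)} \le k$, since the output is either $\vzero_d$ or an image of $H_k$.

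For one iteration, write $\vw \defeq \vth^{(t)} + \mx^\top(\vy - \mx\vth^{(t)}) = \vth^{(t)} + \mx^\top\mx(\vths - \vth^{(t)}) + \mx^\top\vxi$. Set $S \defeq \supp(\vth^{(t)}) \cup \supp(\vths) \cup \supp(\vth^{(t+1)})$, so $|S| \le 3k$.

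First, a thresholding step. For any $\vu$ supported in $S$,
\[\twonorm{\vu - \vw}^2 = \twonorm{\vu_S - \vw_S}^2 + \twonorm{\vw_{S^c}}^2.\]
The vector $\vth^{(t+1)} = H_k(\vw)$ minimizes $\twonorm{\vu - \vw}$ over $k$-sparse $\vu$ and is supported in $S$. Hence it also minimizes $\twonorm{\vu_S - \vw_S}$ among $k$-sparse $\vu$ supported in $S$, and $\vths$ is such a competitor. By the triangle inequality,
\[e_{t+1} \le \twonorm{\vth^{(t+1)}_S - \vw_S} + \twonorm{\vw_S - \vths_S} \le 2\twonorm{\vw_S - \vths_S}.\]

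Next, we bound the restricted residual. Since $\vth^{(t)} - \vths$ is supported in $S$,
\[\vw_S - \vths_S = \Par{\id_S - [\mx^\top\mx]_{S\times S}}(\vth^{(t)} - \vths)_S + [\mx^\top\vxi]_S.\]
By Definition~\ref{def:RIP}, $(0.14,3k)$-RIP means the eigenvalues of $[\mx^\top\mx]_{S\times S}$ lie in $[0.86, 1.14]$, so the first term has norm at most $0.14\, e_t$. The second term has norm at most $\norm{\mx^\top\vxi}_{2,3k}$ because $|S| \le 3k$; this is exactly where the refinement in the footnote enters. Together these give
\[e_{t+1} \le 0.28\, e_t + 2\norm{\mx^\top\vxi}_{2,3k}.\]

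Unrolling from $e_0 = \twonorm{\vths} \le R$ gives
\[e_T \le 0.28^T R + \frac{2}{0.72}\norm{\mx^\top\vxi}_{2,3k}.\]
With $T = \lceil \log_2 \frac R r\rceil$ we have $0.28^T R \le 2^{-T} R \le r$, and $\frac{2}{0.72} < 5$, which yields the first inequality. The second inequality is immediate, since the top $3k$ coordinates of any vector have $\ell_2$ norm at most $\sqrt{3k}$ times its $\ell_\infty$ norm. The only delicate step is the thresholding step: $S$ must be defined to include $\supp(\vth^{(t+1)})$ before comparing $\vth^{(t+1)}$ with $\vths$, so that the best-approximation property of $H_k$ can be applied inside $S$.
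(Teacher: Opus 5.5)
Your proof is correct. It is the standard one-step IHT contraction: the best-$k$-term projection costs a factor $2$ on the common support $S$ with $|S|\le 3k$, and RIP gives $\normop{\id_S - [\mx^\top\mx]_{S\times S}} \le 0.14$. This is the analysis the paper imports from Theorem 4.8 of \cite{price2021}, and you route the noise through $\norm{[\mx^\top\vxi]_S}_2 \le \norm{\mx^\top\vxi}_{2,3k}$ exactly as the footnote prescribes; your constants (contraction $0.28$, noise factor $2/0.72 < 5$) meet the stated bound.
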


%% file: tex_file/model1.tex
\section{Oblivious \texorpdfstring{$\ell_\infty$}{L-inf} Sparse Recovery}
\label{sec:for_each}
In this section, we develop our estimation algorithm to solve Problem~\ref{prob:linfty_sparse} under the oblivious Model~\ref{model:for_each}. 

In Algorithm~\ref{alg:IHT+OLS}, we decompose the task into three phases.
\begin{enumerate}
    \item In the first phase, we run $\IHT$ (Algorithm~\ref{alg:IHT}) to obtain a warm start and finer control on $\norms{\vths}_2$.
    \item In the second phase, we use a simple thresholding procedure to estimate the support of $\vths$.
    \item In the third phase, we perform an ordinary least squares (OLS) step on the learned support.
\end{enumerate}
The analysis of the first phase black-box applies Lemma~\ref{lem:IHT_l2_recover}. After stating some technical preliminaries  in Section~\ref{ssec:helper}, we analyze the second phase in Section~\ref{ssec:phase2} and the third phase in Section~\ref{ssec:phase3}.

Since Algorithm~\ref{alg:IHT+OLS} proceeds in multiple phases, the estimation in later phases generally depends on the outcomes of the preceding ones. To handle this dependence cleanly, we first establish a basic equivalence principle: bounds proved \emph{for every fixed design} are equivalent to bounds proved \emph{for any independent random design}. We will use these two viewpoints interchangeably henceforth.

\begin{lemma}
    \label{lem:fix_param_indep_model}
    Let $(\Omega_1, \calF_1)$ and $(\Omega_2, \calF_2)$ be two measurable spaces, and let $R_1: \Omega_1\to \calX_1$ and $R_2: \Omega_2 \to \calX_2$ be two random variables. Let $\calE \subset \calX_1 \times \calX_2$ be an measurable event. 
    Then 
    \begin{equation*}
        \bbP_{R_1}\bra{(R_1, R_2) \in \calE} \le \delta ,  \text{ for all } R_2 \in \calX_2\iff \bbP_{R_1, R_2}\bra{(R_1, R_2) \in \calE} \le \delta, \text{ for any } R_2 \perp R_1.
    \end{equation*}
\end{lemma}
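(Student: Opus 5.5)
The equivalence is a Fubini/Tonelli argument on the product measure induced by independence. I will treat the two implications separately and work throughout with the section function
\[
g(x_2) \defeq \bbP_{R_1}\bra{(R_1, x_2) \in \calE}, \qquad x_2 \in \calX_2 .
\]
This function is measurable in $x_2$, because $\calE$ is measurable in the product $\sigma$-algebra and sections of product-measurable sets have measurable measures (a standard consequence of Tonelli's theorem). In the left-hand statement, ``for all $R_2 \in \calX_2$'' means that $R_2$ is a deterministic point $x_2$.

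\textbf{($\Rightarrow$).} Assume $g(x_2) \le \delta$ for every $x_2 \in \calX_2$, and let $R_2$ be any random variable independent of $R_1$. By independence, the joint law of $(R_1, R_2)$ is the product $\mu_1 \otimes \mu_2$ of the marginals. Applying Tonelli's theorem to the indicator $\ind_{\calE}$ gives
\[
\bbP_{R_1,R_2}\bra{(R_1,R_2)\in\calE} = \int_{\calX_2} g(x_2)\, d\mu_2(x_2) \le \delta .
\]

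\textbf{($\Leftarrow$).} Assume the joint bound holds for every independent $R_2$. Fix a point $x_2 \in \calX_2$ and take $R_2 \equiv x_2$, a constant random variable. A constant is independent of every random variable, so the hypothesis applies. The joint probability then reduces to $g(x_2)$, which gives $g(x_2) \le \delta$. Since $x_2$ was arbitrary, this proves the left-hand statement.

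The only genuinely technical step is the measurability of $g$ together with the Tonelli disintegration used in the forward direction. Both follow directly from the measurable-section theorem for product spaces, so I do not expect any real obstacle. If the spaces are not standard Borel, I will state the forward direction using the product measure $\mu_1 \otimes \mu_2$ directly, which independence guarantees is the joint law.
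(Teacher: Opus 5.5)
Your proof is correct and follows essentially the same route as the paper: the reverse direction specializes to a deterministic (Dirac) $R_2$, and the forward direction writes the joint probability as an average over $R_2$ of the section probability $g(R_2) \le \delta$, with independence justifying the replacement of the conditional law of $R_1$ by its marginal. Your explicit appeal to Tonelli's theorem and to the measurability of $g$ makes rigorous the step the paper writes informally as $\bbE_{R_2}[\bbP_{R_1}[(R_1,R_2)\in\calE \mid R_2]]$.
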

\begin{proof}
The $\impliedby$ direction follows by taking $R_2$ to be deterministic (a Dirac measure).
To show the $\implies$ direction, notice that 
    \begin{equation*}
        \bbP_{R_1,R_2}\bra{(R_1, R_2) \in \calE} = \bbE_{\R_2}[\bbP_{R_1}\bra{(R_1, R_2)\in \calE \mid R_2}] = \bbE_{R_2}\bra{\bbP_{R_1}\bra{(R_1, R_2)\in \calA}},
    \end{equation*}
    where the last equality is by the fact that $R_1 \perp R_2$.
    By the assumption, we obtain 
    \begin{equation*}
        \bbP_{R_1, R_2}\bra{(R_1, R_2)\in \calE} = \bbE_{R_2}\bra{\bbP_{R_1}\bra{(R_1, R_2) \in \calE}} \le \bbE_{R_2}\bra{\delta} = \delta.
    \end{equation*}
\end{proof}
\begin{algorithm}[ht]
\DontPrintSemicolon
    \caption{$\OSR(\mx, \vy, k, R, r, c)$}
    \label{alg:IHT+OLS}
    $(\mx, \vy) \gets (3\mx, 3\vy)$\;
    Evenly divide observations $(\mx, \vy)$ into $\mx = \begin{bmatrix}
        \mx^{(1)}\\
        \mx^{(2)}\\
        \mx^{(3)}\\
    \end{bmatrix}, \vy = \begin{bmatrix}
        \vy^{(1)}\\
        \vy^{(2)}\\
        \vy^{(3)}
    \end{bmatrix}$\;
    $\vhth \gets \IHT(\mx^{(1)}, \vy^{(1)}, k, R, \sqrt k r)$ \label{line:iht}\;
    $(\vr^{(2)}, \vr^{(3)}) \gets (\vy^{(2)} - \mx^{(2)} \vhth, \vy^{(3)} - \mx^{(3)} \vhth)$ \label{line:residual}\;
    $L \gets \{i \in [d] : |\mx^{(2)\top}_{i:} \vr^{(2)}| \geq \frac r c\}$\label{line:support_id}\;
    $\vth \gets \vhth + [\mx^{(3)\top}\mx^{(3)}]^{-1}_{L \times L} \mx^{(3)\top}_{L:}\vr^{(3)}$\label{line:ols}\;
    \Return {$\vth$}
\end{algorithm}
\subsection{Helper technical lemmas}\label{ssec:helper}

Before we dive into the analysis of Algorithm~\ref{alg:IHT+OLS}, we first introduce several helper lemmas about sub-Gaussian ensembles (Model~\ref{model:subg}), which will be used later. 

\begin{lemma}
    \label{lem:cross_term_fix}
Let $\delta \in (0, \half)$ and let $\vv \in \R^d$, $\vu \in \R^n$ be fixed. Under Model~\ref{model:subg}, with probability $\ge 1 - \delta$,
\[\Abs{\inprod{\vu}{\mx\vv}} \le 2\norm{\vu}_2\norm{\vv}_2\sqrt{\frac{C\log\frac 1 \delta}{n}}.\]
\end{lemma}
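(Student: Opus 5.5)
We will write $\inprod{\vu}{\mx\vv}$ as a sum of independent sub-Gaussian terms and then apply Fact~\ref{fact:composition} followed by Hoeffding's inequality (Lemma~\ref{lem:hoeffding}). Concretely, expanding the bilinear form gives
\[
\inprod{\vu}{\mx\vv} = \sum_{a \in [n]}\sum_{b \in [d]} \vu_a \vv_b \mx_{ab}.
\]
Under Model~\ref{model:subg}, the entries $\mx_{ab}$ are independent and satisfy $\mx_{ab} \sim \subg(0, \frac C n)$. Since $\vu$ and $\vv$ are fixed, the scaling part of Fact~\ref{fact:composition} gives $\vu_a\vv_b\mx_{ab} \sim \subg(0, \vu_a^2\vv_b^2 \frac C n)$. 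The additivity part of Fact~\ref{fact:composition}, applied inductively over the $nd$ mutually independent summands, then shows that the sum is $\subg(0, \sigma^2)$ with
\[
\sigma^2 = \frac C n \sum_{a,b}\vu_a^2\vv_b^2 = \frac{C}{n}\norm{\vu}_2^2\norm{\vv}_2^2.
\]

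Next we apply Lemma~\ref{lem:hoeffding} with $t = 2\norm{\vu}_2\norm{\vv}_2\sqrt{C\log\frac1\delta / n}$. This choice gives $\frac{t^2}{2\sigma^2} = 2\log\frac1\delta$, so the failure probability is at most $2\exp(-2\log\frac1\delta) = 2\delta^2$. For $\delta < \half$ we have $2\delta^2 \le \delta$, which yields the claim. The constant $2$ in the statement is what absorbs the factor of $2$ from the two-sided tail, so no separate union or rescaling step is needed. We also note the degenerate case: if $\vu = \vzero_n$ or $\vv = \vzero_d$, the inner product is identically zero and the bound holds trivially.

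We do not expect any real obstacle. The only point that needs care is that Fact~\ref{fact:composition} requires independence, and this holds because the entries of $\mx$ are i.i.d. while $\vu$ and $\vv$ are deterministic. Lemma~\ref{lem:fix_param_indep_model} then extends the same bound to random $\vu$ and $\vv$ that are independent of $\mx$, and this extension is presumably how the lemma will be used later.
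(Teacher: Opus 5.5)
Your proposal is correct and matches the paper's proof: Fact~\ref{fact:composition} applied entrywise gives $\inprod{\vu}{\mx\vv} \sim \subg(0, \frac{C}{n}\norm{\vu}_2^2\norm{\vv}_2^2)$, and Lemma~\ref{lem:hoeffding} then finishes the argument. Your explicit check that the failure probability is $2\delta^2 \le \delta$ for $\delta < \half$, and your handling of the degenerate case $\vu = \vzero_n$ or $\vv = \vzero_d$, fill in details the paper leaves implicit.
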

\begin{proof}
By Fact~\ref{fact:composition} entrywise, $\inprod{\vu}{\mx\vv} \sim \subg(0, \frac C n \norm{\vv}_2^2\norm{\vu}_2^2)$. The claim is now Lemma~\ref{lem:hoeffding}.
\end{proof}

We conclude the following corollary holds using Lemma~\ref{lem:fix_param_indep_model}.

\begin{corollary}
    \label{cor:cross_term_indep}
    Let $\delta \in (0, \half)$,
    and under Model~\ref{model:subg}, let $\vv \in \R^d$, $\vu \in \R^n$ be random variables satisfying $(\vu, \vv) \perp \mx$ and $\norm{\vv}_2 \le \alpha$, $\norm{\vu}_2 \le \beta$ with probability $1$. Then with probability $\ge 1 - \delta$,
    \begin{equation*}
        \innp{\vu, \mx\vv} \le 2\alpha\beta\sqrt{\frac{C\log \frac 1 \delta}{n}}.
    \end{equation*}
\end{corollary}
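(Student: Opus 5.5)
This follows from Lemma~\ref{lem:cross_term_fix} via Lemma~\ref{lem:fix_param_indep_model}, taking $R_1 = \mx$ and $R_2 = (\vu, \vv)$. The bad event is
\[\calE \defeq \Brace{(\mx, (\vu,\vv)) : \innp{\vu, \mx\vv} > 2\alpha\beta\sqrt{\tfrac{C\log\frac1\delta}{n}}}.\]
Before invoking the lemma, I reduce to the case where $(\vu, \vv)$ is supported on $\calX_2 \defeq \{(\vu,\vv) : \norm{\vu}_2 \le \beta,\ \norm{\vv}_2 \le \alpha\}$. This is harmless, since the norm bounds hold with probability $1$ and modifying $(\vu,\vv)$ on a null set does not change $\Pr[(\mx,(\vu,\vv)) \in \calE]$. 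Measurability of $\calE$ is clear because the inner product is continuous in $(\mx, \vu, \vv)$.

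Next I verify the left-hand hypothesis of Lemma~\ref{lem:fix_param_indep_model} for every fixed $(\vu, \vv) \in \calX_2$. Lemma~\ref{lem:cross_term_fix} gives, with probability $\ge 1-\delta$ over $\mx$,
\[\innp{\vu,\mx\vv} \le \Abs{\innp{\vu,\mx\vv}} \le 2\norm{\vu}_2\norm{\vv}_2\sqrt{\tfrac{C\log\frac1\delta}{n}} \le 2\alpha\beta\sqrt{\tfrac{C\log\frac1\delta}{n}}.\]
Hence $\Pr_{\mx}[(\mx,(\vu,\vv))\in\calE] \le \delta$ uniformly over $\calX_2$. If $\vu = \vzero_n$ or $\vv = \vzero_d$ the bound holds trivially, so Lemma~\ref{lem:cross_term_fix} is only needed for nonzero vectors.

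Since $(\vu,\vv) \perp \mx$, the $\implies$ direction of Lemma~\ref{lem:fix_param_indep_model} then gives $\Pr_{\mx,(\vu,\vv)}[\calE] \le \delta$, which is the claim.

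There is no substantive obstacle. The only point needing care is the one handled first: the uniform-over-fixed-values bound is proved only on the norm-bounded set $\calX_2$, so the conditioning argument requires $(\vu,\vv)$ to land there, which the almost-sure norm constraints guarantee.
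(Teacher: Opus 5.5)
Your proposal is correct and follows the same route as the paper, which obtains this corollary in one line by combining Lemma~\ref{lem:cross_term_fix} with the $\implies$ direction of Lemma~\ref{lem:fix_param_indep_model}. Restricting $(\vu,\vv)$ to the norm-bounded set $\calX_2$ makes explicit a detail the paper leaves implicit.
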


Next, we show an important technical observation, that intuitively translates to the $\ell_\infty$-operator norm bound of $\mx^\top \mx$ restricted to small submatrices not holding for ``typical'' vectors. For a $k \times k$ submatrix of $\mx^\top \mx$ (or its inverse), Proposition~\ref{prop:pi} suggests that we need $n \approx k^2$ for the $\ell_\infty$-operator norm to be bounded by $O(1)$. The following result shows that if we parameterize the operator norm differently, $n \approx k$ samples suffices for a sharper bound on an independent vector to hold.

\begin{lemma}
    \label{lem:noise_linf_bound_fix}
Let $\delta \in (0, \half)$, $n = \Omega(k\log \frac d \delta)$ for an appropriate constant, and $k \in [d]$. Under Model~\ref{model:subg}, for any fixed $S \subseteq [d]$ with $|S| = s \le k$, and fixed $\vv \in \R^n$, with probability $\ge 1 - \delta$ the following hold:
    \begin{align}
        \label{eq:xxinv_v_l2}
        &\infnorm{\xxinv_{S\times S}\mx_{S:}^\top \vv} \le 8\twonorm{\vv}\sqrt{\frac{C\log \frac{d}{\delta}}{n}},\\
        \label{eq:xxinv_v_inf}
        \frac 1 6 \infnorm{\mx_{S:}^\top\vv}\le &\infnorm{\xxinv_{S\times S}\mx_{S:}^\top \vv} \le 6\infnorm{\mx_{S:}^\top\vv}.
    \end{align}
\end{lemma}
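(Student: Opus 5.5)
Fix $S$ with $|S| = s \le k$ and write $\mm \defeq [\mx^\top \mx]_{S\times S}$ and $\vw \defeq \mx_{S:}^\top \vv \in \R^S$. The plan is to split $\mm^{-1} = \id_S + (\mm^{-1} - \id_S)$, so that
\[
\mm^{-1}\vw = \vw + (\mm^{-1} - \id_S)\vw .
\]
It then suffices to show that the correction term is small in $\ell_\infty$, compared both with $\norm{\vv}_2\sqrt{\log(d/\delta)/n}$ and with $\infnorm{\vw}$. The one difficulty is that $\mm^{-1}-\id_S$ has only $O(1)$ operator norm, and its $\infty\to\infty$ norm is not small when $n \approx k$. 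So we cannot bound the correction through $\infnorm{\vw}$ directly. The remedy is to exploit the randomness of $\mx$ coordinate by coordinate.

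\textbf{Step 1 (the first term).} Each coordinate $\vw_i = \inprod{\vx_i}{\vv}$ is $\subg(0, \frac Cn\norm{\vv}_2^2)$ by Fact~\ref{fact:composition}. Hoeffding (Lemma~\ref{lem:hoeffding}) with a union bound over $i\in S$ gives
\[
\infnorm{\vw} \le 2\norm{\vv}_2\sqrt{\frac{C\log(d/\delta)}{n}} .
\]
We also need a matching lower-type control, stated below.

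\textbf{Step 2 (the correction term), the main obstacle.} Fix $i \in S$ and use a leave-one-out / Schur complement formula. Write $T = S\setminus\{i\}$ and let $\matp$ be the projection onto $\mathrm{span}(\mx_{:T})^\perp$ (columns indexed by $T$). Block inversion gives
\[
[\mm^{-1}\vw]_i = \frac{\inprod{\matp\vx_i}{\vv}}{\norm{\matp \vx_i}_2^2}.
\]
This is the coefficient of $\vx_i$ in the least-squares fit of $\vv$ on $\mx_{:S}$. Conditioned on $\mx_{:T}$, the vector $\matp\vv$ is fixed with $\norm{\matp\vv}_2 \le \norm{\vv}_2$, and $\vx_i$ is independent of $\mx_{:T}$. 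Hence, by Corollary~\ref{cor:cross_term_indep} together with Lemma~\ref{lem:fix_param_indep_model},
\[
\inprod{\matp\vx_i}{\vv} = \inprod{\vx_i}{\matp\vv}
\]
is $\le 2\norm{\vv}_2\sqrt{C\log(d/\delta)/n}$ with probability $1-\delta/(3d)$. For the denominator, RIP (Proposition~\ref{prop:RIP_sub_gaussian} with $n=\Omega(k\log\frac d\delta)$, say $(\frac12,k)$-RIP) gives $\norm{\matp\vx_i}_2^2 = 1/[\mm^{-1}]_{ii} \ge 1-\eps \ge \frac12$. Dividing, and taking a union bound over $i$, yields \eqref{eq:xxinv_v_l2} with constant $\le 8$.

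\textbf{Step 3 (the two-sided bound \eqref{eq:xxinv_v_inf}).} Compare the two expressions coordinatewise via
\[
[\mm^{-1}\vw]_i - \frac{\vw_i}{\norm{\matp\vx_i}_2^2} = -\frac{\inprod{(\id-\matp)\vx_i}{\vv}}{\norm{\matp\vx_i}_2^2}.
\]
The numerator is small because $(\id-\matp)\vx_i$ lies in a $(s-1)$-dimensional space that is nearly independent of the direction of $\vx_i$. The cleaner route is to express $(\id-\matp)\vx_i = \mx_{:T}\mathbf{a}$ with $\mathbf{a} = [\mx_{:T}^\top\mx_{:T}]^{-1}\mx_{:T}^\top\vx_i$. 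RIP gives $\norm{\mathbf a}_2 \lesssim \sqrt{k/n}$, and then
\[
\inprod{\mx_{:T}\mathbf a}{\vv} = \inprod{\mathbf a}{\vw_T}
\]
has size $\lesssim \norm{\mathbf a}_2 \sqrt k\,\infnorm{\vw}$, i.e. $\lesssim (k/\sqrt n)\infnorm{\vw}$, which is too weak. So here I would instead condition on $\mx_{:T}$ and bound $\inprod{\vx_i}{(\id-\matp)\vv}$ by Corollary~\ref{cor:cross_term_indep}: it is $\lesssim \norm{(\id-\matp)\vv}_2\sqrt{\log(d/\delta)/n}$. The quantity $\norm{(\id-\matp)\vv}_2 \le \norm{[\mx_{:T}^\top\mx_{:T}]^{-1/2}\vw_T}_2 \lesssim \sqrt k\infnorm{\vw}$, giving an error of $\lesssim \sqrt{k\log(d/\delta)/n}\,\infnorm{\vw} \le \frac1{10}\infnorm{\vw}$ for $n=\Omega(k\log\frac d\delta)$.

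Combining this with $\norm{\matp\vx_i}_2^2\in[\frac12,\frac32]$ gives
\[
|[\mm^{-1}\vw]_i| \le 2|\vw_i| + \tfrac15\infnorm{\vw} ,
\]
which yields the upper bound. For the lower bound, choose $i$ attaining $\infnorm{\vw}$; then $|[\mm^{-1}\vw]_i| \ge \frac23\infnorm{\vw} - \frac15\infnorm{\vw}$. A final union bound over the $|S|\le d$ coordinates and the RIP event completes the proof.
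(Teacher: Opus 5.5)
Your proof is correct and follows essentially the same route as the paper. Both use the Schur-complement identity $[\mm^{-1}\vw]_i = \vx_i^\top\matp\vv / \vx_i^\top\matp\vx_i$, a leave-one-out sub-Gaussian bound on $\vx_i^\top\matp\vv$ for \eqref{eq:xxinv_v_l2}, and the split $\vx_i^\top\vv - \vx_i^\top(\id-\matp)\vv$ with $\norm{(\id-\matp)\vv}_2 \lesssim \sqrt{k}\,\infnorm{\mx_{S:}^\top\vv}$ for \eqref{eq:xxinv_v_inf}; your Step 1 and the initial $\id + (\mm^{-1}-\id)$ splitting end up unused. The one genuine difference is the denominator: you lower-bound it deterministically via $\vx_i^\top\matp\vx_i = 1/[\mm^{-1}]_{ii} \ge 1-\eps$ under RIP, whereas the paper invokes Hanson--Wright plus a union bound, so your version is slightly simpler.
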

\begin{proof}
By Proposition~\ref{prop:RIP_sub_gaussian}, with probability $1-\frac \delta 4$, $\mx$ satisfies $(\half, k)$-RIP, and under this event, applying the RIP definition to standard basis vectors yields that for all $i \in [d]$, we have 
$\twonorm{\vx_i}^2 \in [\half, \frac 3 2]$.

To prove the statement, it suffices to control the quantity, for all $i \in S$,
\[\Abs{\ve_i^\top [\mx^\top \mx]^{-1}_{S\times S}\mx_{S:}^\top \vv}.\]
For any $i\in S$, we let $\mx_{:S-i}\in\bbR^{n\times (s-1)}$ be the submatrix of $\mx_{:S}$ without its $i^{\text{th}}$ column. Now fix an index $i \in S$. Displaying $S \times S$ matrices so that index $i$ is in the top-left corner gives
\begin{align*}
[\mx^\top \mx]_{S\times S}
=
\begin{bmatrix}
a & \vb^\top\\
\vb & \mc
\end{bmatrix},
\text{ where }
a \defeq \vx_i^\top \vx_i,\;
\vb \defeq \mx_{S-i:}^\top \vx_i,\;
\mc \defeq \mx_{S-i:}^\top \mx_{:S-i}.
\end{align*}

By the Schur complement formula,
\begin{equation*}
[\mx^\top \mx]^{-1}_{S\times S}
=
\frac{1}{a-\vb^\top \mc^{-1}\vb}
\begin{bmatrix}
1 & -\vb^\top \mc^{-1}\\
-\mc^{-1}\vb & \mc^{-1}\vb\vb^\top\mc^{-1}+(a-\vb^\top\mc^{-1}\vb)\mc^{-1}
\end{bmatrix}.
\end{equation*}
We introduce the helpful notation
\begin{equation*}
\matp_i := \id_n - \mx_{:S-i}(\mx_{S-i:}^\top\mx_{:S-i})^{-1}\mx_{S-i:}^\top,
\quad
\vu_i := \mx_{:S-i}(\mx_{S-i:}^\top\mx_{:S-i})^{-1}\mx_{S-i:}^\top \vv,
\end{equation*}
so $\matp_i$ is the orthogonal projector onto the range of $\mx_{:S-i}$.
A direct calculation yields
\begin{equation*}
a-\vb^\top\mc^{-1}\vb = \vx_i^\top \matp_i \vx_i,
\quad
\ve_i^\top [\mx^\top\mx]^{-1}_{S\times S}\mx_{S:}^\top \vv
=
\frac{\vx_i^\top \matp_i \vv}{\vx_i^\top \matp_i \vx_i}
=
\frac{\vx_i^\top \vv-\vx_i^\top \vu_i}{\vx_i^\top \matp_i \vx_i}.
\end{equation*}
Thus, 
\begin{equation}\label{eq:key_ratio}
\infnorm{\xxinv_{S\times S}\mx_{S:}^\top \vv} = \max_{i\in S} \frac{\Abs{\vx_i^\top \matp_i \vv}}{\vx_i^\top \matp_i \vx_i} = \max_{i\in S}\frac{\Abs{\vx_i^\top \vv-\vx_i^\top \vu_i}}{\vx_i^\top \matp_i \vx_i}.
\end{equation}
We proceed to establish concentration of both the numerator and denominator of \eqref{eq:key_ratio}.

\textbf{Controlling the denominator of \eqref{eq:key_ratio}.}
The matrix $\matp_i$ is an orthogonal projection with rank
$r = n+1-s \ge \frac n 2$, and $\vx_i \perp \matp_i$. Hence, for all $i \in [s]$,
\[\E\Brack{\vx_i^\top \matp_i\vx_i} = \inprod{\E\Brack{\vx_i\vx_i^\top}}{\matp_i} = \frac 1 n \Tr(\matp_i) = \frac r n \ge \half.\]
Next, the Hanson-Wright inequality (Theorem 6.2.2, \cite{vershynin2018high}) states that there is a constant $c > 0$ such that for all $t \ge 0$,
\begin{align*}
\Pr\Brack{\vx_i^\top \matp_i \vx_i \le \E[\vx_i^\top \matp_i \vx_i] - t} \le \exp\Par{-c\min\Par{\frac{t^2 n^2}{C^2 r}, \frac{tn}{C}}}.
\end{align*}
Combining the above two displays, and applying a union bound over $i \in S$, we have
\begin{equation}\label{eq:hanson_wright}\Pr\Brack{\exists i \in S \mid \vx_i^\top \matp_i \vx_i \le \frac 1 4} \le s\exp\Par{-c\min\Par{\frac{n}{16C^2},\frac{n}{4C}}} \le \frac \delta 4,\end{equation}
for sufficiently large $n = \Omega(\log \frac d \delta)$ as stated, using $s \le d$.
Also, RIP implies $\vx_i^\top \matp_i \vx_i \le \twonorm{\vx_i}^2 \le \frac 3 2$.

\paragraph{Control of $\vx_i^\top \matp_i \vv$.}
Since $\matp_i$ is a projection operator, $\twonorm{\matp_i\vv} \le \twonorm{\vv}$.
Conditioning on $(\matp_i,\vv)$ and using $\vx_i \perp (\matp_i, \vv)$, we thus have
$\vx_i\matp_i\vv \sim \subg(0, \frac C n \twonorm{\matp_i\vv}^2)$.
Hence,
\begin{equation*}
\prob{\Abs{\vx_i^\top \matp_i\vv} > t} \le 2\exp\paren{-\frac{ n t^2}{2C\|\vv\|_2^2}},
\end{equation*}
for all $i \in S$, by Lemma~\ref{lem:hoeffding}.
Now applying a union bound over $i\in S$ yields
\begin{equation*}
\prob{\exists i \in S : \Abs{\vx_i^\top \matp_i \vv} > 2\|\vv\|_2\sqrt{\frac{C\log\frac{d}{\delta}}{n}}}\le \frac \delta 4, 
\end{equation*}
which further implies~\eqref{eq:xxinv_v_l2} when plugged into \eqref{eq:key_ratio}, as long as the event in \eqref{eq:hanson_wright} fails.

\paragraph{Control of $\vx_1^\top \vu_i$.}
Conditioned on $\mx$ satisfying $(\half,k)$-RIP, we have
\begin{equation*}
\twonorm{\vu_i} 
\le
\norm{\mx_{:S-i}}_{2\to 2} \cdot \normop{[\mx_{S-i:}^\top\mx_{:S-i}]^{-1}} \cdot \twonorm{\mx_{S-i:}^\top \vv}
\le
4\sqrt{s}\infnorm{\mx^\top_{S:}\vv}
\le
4\sqrt{k}\infnorm{\mx^\top_{S:}\vv}.
\end{equation*}

Since $\vu_i$ is independent of $\vx_i$, we have by Fact~\ref{fact:composition} that
$\vx_i^\top \vu_i \sim \subg(0, \frac C n \twonorm{\vu_i}^2)$, and thus
\begin{equation*}
\prob{\Abs{\vx_i^\top \vu} > t}
\le
2\exp\paren{-\frac{nt^2}{2C\twonorm{\vu}^2}}
\le
2\exp\paren{-\frac{n t^2}{32kC\infnorm{\mx^\top_{S:} \vv}^2}}.
\end{equation*}

Taking $t = \half\infnorm{\mx_{S:}^\top \vv}$ and applying a union bound over $i\in S$ yields
\begin{equation*}
\prob{\exists i : \Abs{\vx_i^\top \vu} > \half\infnorm{\mx_{S:}^\top \vv}}\le \frac \delta 4, 
\end{equation*}
for sufficiently large $n = \Omega(k\log \frac d \delta)$. Conditioning on all four random events used so far in the proof gives the failure probability. Now letting $j \in S$ be the index that achieves $|\vx_{j}^\top\vv| = \infnorm{\mx_{S:}^\top \vv}$, and letting $i \in S$ be the index that achieves the maximum in \eqref{eq:key_ratio}, we include that \eqref{eq:xxinv_v_inf} holds:
\begin{align*}   \frac{\Abs{\vx_i^\top \vv - \vx_i^\top \vu_i}}{\vx_i^\top \matp_i \vx_i} &\ge \frac{\Abs{\vx_j^\top \vv - \vx_j^\top \vu_j}}{\vx_j^\top \matp_j \vx_j} \ge \frac 2 3 \Abs{\vx_j^\top \vv - \vx_j^\top \vu_j} \ge \frac 1 6\norm{\mx^\top_{S:}\vv}_\infty, \\
\frac{\Abs{\vx_i^\top \vv - \vx_i^\top \vu_i}}{\vx_i^\top \matp_i \vx_i} &\le 4\Abs{\vx_i^\top \vv} + 4\Abs{\vx_i^\top \vu_i} \le 6\norm{\mx^\top_{S:}\vv}_\infty. \end{align*}
\end{proof}

We again state a corollary due to Lemma~\ref{lem:fix_param_indep_model}.

\begin{corollary}
    \label{cor:noise_linf_bound_indep}
Let $\delta \in (0, \half)$, $n = \Omega(k\log\frac d \delta)$ for an appropriate constant, and $k \in [d]$. Under Model~\ref{model:subg}, for any $S \perp \mx$ such that $|S| \le k$, and $\vv \perp \mx_{:S}$, with probability $\ge 1 - \delta$ we have
    \begin{equation}
        \label{eq:xxinv_v}
        \infnorm{\xxinv_{S\times S}\mx_{S:}^\top \vv} \le 8\min\paren{\infnorm{\mx_{S:}^\top\vv}, \twonorm{\vv}\sqrt{\frac{C\log\frac{d}{\delta}}{n}}}.
    \end{equation}
\end{corollary}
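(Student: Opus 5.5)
The plan is to derive Corollary~\ref{cor:noise_linf_bound_indep} from Lemma~\ref{lem:noise_linf_bound_fix} through the equivalence principle of Lemma~\ref{lem:fix_param_indep_model}. There is one wrinkle: the corollary only assumes $\vv \perp \mx_{:S}$, not $\vv \perp \mx$.

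\textbf{Step 1: fix $S$ and $\vv$.} First condition on $S$. Because $S \perp \mx$, the conditional law of $\mx$ given $S$ is still Model~\ref{model:subg}, so we may treat $S$ as a fixed set with $|S| = s \le k$.

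Next I would observe that every quantity in the proof of Lemma~\ref{lem:noise_linf_bound_fix} involves only the columns $\mx_{:S}$:
\begin{itemize}
\item the submatrix $[\mx^\top\mx]_{S\times S} = \mx_{S:}^\top \mx_{:S}$;
\item the vector $\mx_{S:}^\top \vv$;
\item the projectors $\matp_i$ and the vectors $\vu_i$ for $i \in S$.
\end{itemize}
The single exception is the global $(\half,k)$-RIP event. That event is used only through the restriction of $\mx$ to columns in $S$, namely the column norm bounds $\twonorm{\vx_i}^2 \in [\half, \frac32]$ for $i \in S$ and the bound on $\normop{[\mx_{S-i:}^\top\mx_{:S-i}]^{-1}}$. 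So it can be replaced by the event that the $n \times s$ matrix $\mx_{:S}$ has all its squared singular values in $[\half,\frac32]$. By Proposition~\ref{prop:RIP_sub_gaussian}, applied with $d = s$, this event has probability at least $1-\frac\delta4$ once $n = \Omega(k\log\frac d\delta)$.

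Hence, for every fixed $S$ and every fixed $\vv$, the event in \eqref{eq:xxinv_v_l2}--\eqref{eq:xxinv_v_inf} is a measurable event of the pair $(\mx_{:S}, \vv)$. It holds with probability at least $1-\delta$ over $\mx_{:S}$ alone.

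\textbf{Step 2: apply the equivalence principle.} Apply Lemma~\ref{lem:fix_param_indep_model} with $R_1 = \mx_{:S}$ and $R_2 = \vv$. Since $R_2 \perp R_1$ conditionally on $S$, both bounds hold with probability at least $1-\delta$ for the random $\vv$. Averaging over $S$ then removes the conditioning.

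\textbf{Step 3: combine into the stated form.} On this event, the upper bound in \eqref{eq:xxinv_v_inf} gives
\[
\infnorm{\xxinv_{S\times S}\mx_{S:}^\top \vv} \le 6\infnorm{\mx_{S:}^\top\vv} \le 8\infnorm{\mx_{S:}^\top\vv}.
\]
The bound \eqref{eq:xxinv_v_l2} gives the $\ell_2$ term directly. Together these yield the minimum in \eqref{eq:xxinv_v}.

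\textbf{Main obstacle.} The only delicate point is Step~1: checking that the proof of Lemma~\ref{lem:noise_linf_bound_fix} never uses columns outside $S$, so that independence from $\mx_{:S}$ alone suffices. The Hanson--Wright bound and the two Hoeffding steps need $\vx_i$ independent of $(\matp_i, \vv, \vu_i)$. This holds because $\matp_i$ and $\vu_i$ are functions of $\mx_{:S-i}$ and $\vv$, and $\vv \perp \mx_{:S}$. The union bounds run over $i \in S$, with at most $s \le d$ terms, which is consistent with the $\log\frac d\delta$ factors.
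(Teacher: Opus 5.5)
Your proposal is correct and follows the paper's own argument. The paper likewise combines Lemma~\ref{lem:noise_linf_bound_fix} with the equivalence principle of Lemma~\ref{lem:fix_param_indep_model}, notes that only independence of $\vv$ from $\mx_{:S}$ is needed, and merges \eqref{eq:xxinv_v_l2} with the upper half of \eqref{eq:xxinv_v_inf}. (Your re-audit of the RIP step is harmless but not strictly necessary: for fixed $S$ and $\vv$, the lemma's conclusion is already an event determined by $(\mx_{:S},\vv)$, so its probability over $\mx$ equals its probability over $\mx_{:S}$ alone.)
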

\begin{proof}
This follows from Lemmas~\ref{lem:fix_param_indep_model} and~\ref{lem:noise_linf_bound_fix}, where we note that the proof of Lemma~\ref{lem:noise_linf_bound_fix} only required the independence assumption between $\vv$ and $\mx_{:S}$ (rather than all of $\mx$).
\end{proof}

\subsection{Support identification}\label{ssec:phase2}

We proceed to analyze the support identification step in Line~\ref{line:support_id} of Algorithm~\ref{alg:IHT+OLS}.
Direct calculation shows that in Problem~\ref{prob:linfty_sparse}, for any $i \in \supp(\vths)$ and $j \in \supp(\vths)^c$, we have 
\begin{equation}\label{eq:indep_calc}
    \vx_i^\top\vy = \twonorm{\vx_i}^2\vths_i + \innp{\vx_i, \mx_{:-i}\vths_{-i}} + \vx_i^\top \vxi, \quad \vx_j^\top \vy = \innp{\vx_j, \mx\vths} + \vx_j^\top \vxi.
\end{equation}
Our goal is to separate such a pair $(i, j)$, whenever $|\vths_i| \gg \norms{\mx^\top \vxi}_\infty$ is large (intuitively, a ``heavy'' coordinate of the support). We obtain signal from $\norm{\vx_i}_2^2 \vths_i \gg \norms{\mx^\top \vxi}_\infty$, and 
the pure noise terms $|\vx_i^\top \vxi|$, $|\vx_j^\top \vxi|$ are trivially bounded by $\infnorm{\mx^\top \vxi}$. Thus, our goal is to ensure the leftover terms, $\inprod{\vx_i}{\mx_{:-i}\vths_{-i}}$ and $\innp{\vx_j, \mx\vths}$, are small in magnitude. If we can ensure this holds, then a simple thresholding operation on $\mx^\top \vy$ will separate large coordinates of $\vths$ from zero coordinates.

We now formalize this idea, crucially leveraging independence of $\mx$ and $\vths$.

\begin{lemma}
    \label{lem:large_coord_est}
Let $\delta \in (0, \half)$, $n = \Omega(k \log \frac d \delta)$ for an appropriate constant, and $k \in [d]$. Under Model~\ref{model:subg}, suppose that $L \defeq \{i \in [d] : |\vx_i^\top \vy| \ge r_\infty\}$, for 
\[r_\infty \ge \frac{\norm{\vths}_2}{\sqrt k} + 3\norm{\mx^\top \vxi}_\infty.\]
Then the following hold with probability $\ge 1 - \delta$:
\[L \subseteq \supp(\vths),\quad \norm{\vths_{L^c}}_\infty \le 4r_\infty.\]
\end{lemma}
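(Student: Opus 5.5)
Throughout I work with the decomposition \eqref{eq:indep_calc}, in the oblivious setting $(\vths,\vxi)\perp\mx$. The plan is to show that with probability $\ge 1-\delta$, every cross term is at most $\frac{\norm{\vths}_2}{\sqrt k}$ up to a small constant factor:
\[\Abs{\inprod{\vx_i}{\mx_{:-i}\vths_{-i}}} \le \tfrac{1}{2}\cdot\frac{\norm{\vths}_2}{\sqrt k}\ \text{ for all } i\in[d],\]
where for $j\notin\supp(\vths)$ the term $\inprod{\vx_j}{\mx\vths}$ is the same quantity since $\vths_j=0$. I also want $\twonorm{\vx_i}^2\in[\half,\frac32]$ for all $i$, which follows from $(\half,k)$-RIP via Proposition~\ref{prop:RIP_sub_gaussian} with failure probability $\delta/2$.

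For the cross term, fix $i$. The vector $\vw_i\defeq\mx_{:-i}\vths_{-i}$ is independent of $\vx_i$, since it depends only on the other columns and on $\vths\perp\mx$. Conditioned on $\vw_i$, Fact~\ref{fact:composition} gives $\vx_i^\top\vw_i\sim\subg(0,\frac Cn\twonorm{\vw_i}^2)$. RIP applied to the $k$-sparse vector $\vths_{-i}$ gives $\twonorm{\vw_i}\le\sqrt{3/2}\,\twonorm{\vths}$. The bound must hold on the RIP event, not conditionally on it, so I would instead use Corollary~\ref{cor:cross_term_indep}-style reasoning: work with a truncated $\vw_i$, or bound $\twonorm{\vw_i}$ by a separate union-bounded concentration for the fixed vector $\vths_{-i}$ (Lemma~\ref{lem:cross_term_fix} and Lemma~\ref{lem:fix_param_indep_model}). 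Hoeffding (Lemma~\ref{lem:hoeffding}) with a union bound over $d$ indices then gives
\[\max_i\Abs{\vx_i^\top\vw_i}\le 2\twonorm{\vths}\sqrt{\frac{3C\log\frac{4d}{\delta}}{n}}\le \frac{\twonorm{\vths}}{2\sqrt k},\]
once $n=\Omega(k\log\frac d\delta)$ with a large enough constant. I expect this step, handling the dependence between $\vw_i$'s norm and the RIP event cleanly, to be the main technical point. Since $\vw_i$ depends on $\mx_{:-i}$ only, conditioning on $\mx_{:-i}$ suffices.

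With these bounds in hand, the conclusions follow. For $j\notin\supp(\vths)$ we get $|\vx_j^\top\vy|\le\frac{\norm{\vths}_2}{2\sqrt k}+\infnorm{\mx^\top\vxi}<r_\infty$, so $j\notin L$; hence $L\subseteq\supp(\vths)$. For $i\in\supp(\vths)\setminus L$ we have
\[r_\infty>|\vx_i^\top\vy|\ge\tfrac12|\vths_i|-\frac{\norm{\vths}_2}{2\sqrt k}-\infnorm{\mx^\top\vxi},\]
so $|\vths_i|<2r_\infty+\frac{\norm{\vths}_2}{\sqrt k}+2\infnorm{\mx^\top\vxi}\le 4r_\infty$, using that $\frac{\norm{\vths}_2}{\sqrt k}+2\infnorm{\mx^\top\vxi}\le r_\infty$ by the assumption on $r_\infty$. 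Coordinates outside $\supp(\vths)$ are zero, so $\norm{\vths_{L^c}}_\infty\le 4r_\infty$.
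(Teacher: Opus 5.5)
Your proposal is correct and follows the paper's proof essentially step for step. Both use RIP to get $\twonorm{\vx_i}^2\in[\half,\frac32]$, then bound each cross term $\vx_i^\top\mx_{:-i}\vths_{-i}$ by a sub-Gaussian tail, using independence of $\vx_i$ from $\mx_{:-i}\vths_{-i}$ with a union bound over $[d]$, and finish with the same thresholding argument on and off the support. Your resolution of the conditioning issue is the right one and is slightly more careful than the paper: condition on $(\mx_{:-i},\vths)$ and intersect with the event $\twonorm{\vw_i}\le\sqrt{3/2}\,\twonorm{\vths}$. The paper simply invokes Corollary~\ref{cor:cross_term_indep}, even though that corollary nominally requires an almost-sure norm bound.
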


\begin{proof}
Throughout the proof, denote $S \defeq \supp(\vths)$. We claim both of the following hold, with probability at least $1 - \delta$: $\mx$ is $(\half, 1)$-RIP, and 
\[\max\Brace{\max_{i \in S} \inprod{\vx_i}{\mx_{:-i}\vths_{-i}},\; \max_{j \in S^c} \inprod{\vx_j}{\mx\vths} } \le \frac{\norms{\vths}_2}{\sqrt k}.\]
To see that this is correct, RIP holds with probability $\ge 1 - \frac \delta 2$ by Proposition~\ref{prop:RIP_sub_gaussian}, which implies all $i \in [d]$ have $\norm{\vx_i}_2 \in [\half, \frac 3 2]$. Hence, because  $(i, j) \in S \times S^c$ satisfy $\vx_i \perp \mx_{:-i} \vths_{-i}$, and $\vx_j \perp \mx \vths$, applying Corollary~\ref{cor:cross_term_indep} with failure probability $\delta \gets \frac{\delta}{2d}$ gives the bound for large enough $n$. 

Condition on the above events holding henceforth.
Now recalling the decomposition \eqref{eq:indep_calc}, 
\begin{align*}
|\vx_j^\top \vy| \le \frac{\norm{\vths}_2}{\sqrt k} + \norm{\mx^\top \vxi}_\infty \text{ for all } j \not\in S. \end{align*}
Finally, supposing some $i \in [d]$ with  $|\vths_i| > 4r_\infty$ is not contained in $L$ gives a contradiction:
\begin{align*}
|\vx_i^\top \vy| > \half\Abs{\vths_i} -\frac{\norm{\vths}_2}{\sqrt k} - \norm{\mx^\top \vxi}_\infty  \ge r_\infty.
\end{align*}
\end{proof}
\subsection{In-support estimation}\label{ssec:phase3}
Once the large coordinates of $\vths$ are isolated, the remaining task reduces to a low-dimensional regression problem restricted to the recovered support. As our main helper lemma, we show that running ordinary least squares (OLS) on the learned support directly gives us accurate estimation in the $\ell_\infty$ norm, by crucially leveraging independence in Model~\ref{model:for_each}.

\begin{lemma}
    \label{lem:OLS_bound}
Let $\delta \in (0, \half)$, $n = \Omega(k\log \frac d \delta)$ for an appropriate constant, and $k \in [d]$. Under Models~\ref{model:for_each} and~\ref{model:subg}, suppose that $L \subseteq \supp(\vths)$, and that $L \perp \mx$. Then with probability $\ge 1 - \delta$,
\[\infnorm{\vths_{L} - [\mx^{\top}\mx]^{-1}_{L \times L} \mx^{\top}_{L:}\vy} \leq 8\norm{\mx^\top \vxi}_\infty + \frac 1 {\sqrt k}\norm{\vths}_2.\]
\end{lemma}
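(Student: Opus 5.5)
Write $S \defeq \supp(\vths)$ and split $\vy$ into the part explained by $L$ and the rest:
\[\vy = \mx_{:L}\vths_L + \mx_{:S\setminus L}\vths_{S\setminus L} + \vxi .\]
Since $[\mx^\top\mx]^{-1}_{L\times L}\mx^\top_{L:}\mx_{:L} = \id_L$, the OLS error decomposes as
\[[\mx^\top\mx]^{-1}_{L \times L}\mx^\top_{L:}\vy - \vths_L = [\mx^\top\mx]^{-1}_{L \times L}\mx^\top_{L:}\vxi + [\mx^\top\mx]^{-1}_{L \times L}\mx^\top_{L:}\vw,\qquad \vw \defeq \mx_{:S\setminus L}\vths_{S\setminus L}.\]
I will bound the two terms separately, by $6\norm{\mx^\top\vxi}_\infty$ (up to a constant) and $\frac{1}{\sqrt k}\norm{\vths}_2$ respectively. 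Throughout, I condition on $(L, \vths, \vxi)$, which is legitimate by Lemma~\ref{lem:fix_param_indep_model} because $L \perp \mx$ and Model~\ref{model:for_each} holds.

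\textbf{Noise term.} Here $\vxi \perp \mx$ and $|L| \le k$, so the right-hand inequality of \eqref{eq:xxinv_v_inf} in Lemma~\ref{lem:noise_linf_bound_fix} (with $S \gets L$, $\vv \gets \vxi$), or equivalently Corollary~\ref{cor:noise_linf_bound_indep}, gives with probability $\ge 1-\delta/2$
\[\infnorm{[\mx^\top\mx]^{-1}_{L\times L}\mx^\top_{L:}\vxi} \le 6\infnorm{\mx^\top_{L:}\vxi} \le 6\infnorm{\mx^\top\vxi}.\]
This is within the budget of $8\norm{\mx^\top\vxi}_\infty$.

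\textbf{Signal-leakage term.} This is the main obstacle, because $\vw$ depends on $\mx$ and so the lemmas cannot be applied to it directly. The key observation is that $\vw$ depends only on the columns $\mx_{:S\setminus L}$, which are disjoint from $L$. Under Model~\ref{model:subg} the entries are i.i.d., so $\vw \perp \mx_{:L}$.

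Corollary~\ref{cor:noise_linf_bound_indep} requires only $\vv \perp \mx_{:S}$, as noted in its proof. Applying its $\ell_2$ branch, \eqref{eq:xxinv_v_l2}, with $\vv \gets \vw$ gives
\[\infnorm{[\mx^\top\mx]^{-1}_{L\times L}\mx^\top_{L:}\vw} \le 8\twonorm{\vw}\sqrt{\frac{C\log\frac d\delta}{n}}.\]
It remains to control $\twonorm{\vw}$. On the event that $\mx$ is $(\half,k)$-RIP, which holds with probability $\ge 1-\delta/4$ by Proposition~\ref{prop:RIP_sub_gaussian}, we have $\twonorm{\vw}\le \sqrt{3/2}\,\norm{\vths_{S\setminus L}}_2 \le \sqrt{3/2}\,\norm{\vths}_2$. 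Alternatively, Hoeffding applied to the fixed vector $\vths_{S\setminus L}$ gives the same bound with high probability.

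With $n \ge C' k\log\frac d\delta$ for a large enough constant $C'$, the leakage term is then at most $\frac{1}{\sqrt k}\norm{\vths}_2$.

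\textbf{Conclusion.} The only delicate points are (i) checking that the proof of Lemma~\ref{lem:noise_linf_bound_fix} indeed uses only $\vv\perp \mx_{:L}$, together with RIP on $\mx$, and (ii) that the RIP event may be intersected freely. A union bound over the failure events then gives the claim with probability $\ge 1-\delta$.
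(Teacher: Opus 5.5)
Your proposal is correct and follows the paper's proof essentially step for step. You use the same split into a noise term and a leakage term $\vw = \mx_{:L^c}\vths_{L^c}$, the same observation that $\vw \perp \mx_{:L}$ so the $\ell_2$ branch of Corollary~\ref{cor:noise_linf_bound_indep} applies, and RIP to bound $\twonorm{\vw}$. The differences are only cosmetic: you get the constant $6$ from the right-hand side of \eqref{eq:xxinv_v_inf} and $\sqrt{3/2}$ from RIP, where the paper uses $8$ and $2$. Your aside that plain Hoeffding would also control $\twonorm{\vw}$ would actually need a Bernstein-type bound to avoid losing a logarithm, but that step is not needed.
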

\begin{proof}
Because we assumed $L \subseteq S \defeq \supp(\vths)$, we can decompose $\mx \vths = \mx_{:L} \vths + \mx_{:L^c} \vths_{L^c}$, so the OLS solution also admits the following decomposition:
\[[\mx^{\top}\mx]^{-1}_{L \times L} \mx^{\top}_{L:}\vy = \vths_{L} + [\mx^{\top}\mx]^{-1}_{L \times L}\mx^{\top}_{L:}\mx_{:L^c}\vths_{L^c} + [\mx^{\top}\mx]^{-1}_{L \times L} \mx^{\top}_{L:}\vxi.\]
In the remainder of the proof we bound the last two terms on the right-hand side. First, condition on $\mx$ satisfying $(\half, k)$-RIP, with fails with probability $\frac \delta 3$ by Proposition~\ref{prop:RIP_sub_gaussian}. Next, letting $\vw \defeq \mx_{L^c} \vths_{L^c}$, we have by RIP that $\norm{\vw}_2 \le 2\norms{\vths}_2$, and $(L, \vw) \perp \mx_{:L}$ by assumption. Thus, by Corollary~\ref{cor:noise_linf_bound_indep},
\[\norm{\Brack{\mx^\top \mx}_{L \times L}^{-1}\mx_{L:}^\top\vw}_\infty \le 8 \norm{\vw}_2 \sqrt{\frac{C\log \frac d \delta}{n}} \le \frac 1 {\sqrt k}\norm{\vths}_2,\]
except with probability $\frac \delta 3$, by our choice of $n$. Finally, again by Corollary~\ref{cor:noise_linf_bound_indep},
\[\norm{\Brack{\mx^\top \mx}_{L \times L}^{-1}\mx_{L:}^\top \vxi}_\infty \le 8\norm{\mx_{L:}^\top \vxi}_\infty \le 8\norm{\mx^\top \vxi}_\infty,\]
except with probability $\frac \delta 3$, because $\vxi \perp \mx$. The conclusion follows from combining the above two displays and applying a union bound for the failure probability.
\end{proof}

Finally, we give an end-to-end guarantee for Algorithm~\ref{alg:IHT+OLS}.
\begin{theorem}[Oblivious $\ell_\infty$ sparse recovery]
    \label{thm:l_inf_norm_bound_model1}
Let $\delta \in (0, \half)$, $n = \Omega(k \log \frac d \delta)$ for an appropriate constant, and $k \in [d]$. Under Models~\ref{model:for_each} and~\ref{model:subg}, there is a universal constant $c > 0$ such that if Algorithm~\ref{alg:IHT+OLS} is called with parameters $c$, $R \ge \norm{\vths}_2$, and 
\[r \ge \sigma \defeq \norm{\mx^\top\vxi}_\infty \sqrt{\log \frac{n}{\delta}},\]
then with probability $\ge 1 - \delta$, its output satisfies
\begin{equation}
        \label{eq:l_inf_norm_bound_model1}
        \infnorm{\vth - \vths} = O\paren{r},\quad \Abs{\supp(\vth)} = O\Par{k}.
    \end{equation}
    Moreover, the algorithm runs in time $O(nd\log \frac R r + nk\log\frac{Rk}{r})$.
\end{theorem}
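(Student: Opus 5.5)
We chain the three phases of Algorithm~\ref{alg:IHT+OLS}, conditioning on earlier phases and using the sample split to keep independence. After rescaling by $3$, each block $\mx^{(j)}$ has $n/3$ rows with i.i.d.\ $\subg(0, \frac{3C}{n/3}\cdot\frac13)$-type entries of variance $\frac{3}{n}$. It is therefore again a Model~\ref{model:subg} matrix with $n/3 = \Omega(k\log\frac d\delta)$ rows, and the three blocks are mutually independent. The noise for block $j$ is $3\vxi^{(j)}$. A bookkeeping step relates $\norms{\mx^{(j)\top}\vxi^{(j)}}_\infty$ to the full $\norm{\mx^\top\vxi}_\infty$. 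This is where the $\sqrt{\log\frac n\delta}$ slack in $\sigma$ should be spent. Conditioned on $\vxi$, each coordinate of $\mx^{(j)\top}\vxi^{(j)}$ is sub-Gaussian with scale $\norms{\vxi^{(j)}}_2/\sqrt n$, so it is at most $O(\norms{\vxi}_2\sqrt{\log(d/\delta)/n})$. Meanwhile $\norm{\mx^\top\vxi}_\infty$ is at least of order $\norms{\vxi}_2/\sqrt n$ by anticoncentration (Lemma~\ref{lem:anticonc}), applied to the maximum over $d$ independent coordinates. I expect this comparison to be the main obstacle: getting a clean per-block bound of $O(\sigma)\le O(r)$ on $\norms{\mx^{(j)\top}\vxi^{(j)}}_\infty$ with probability $1-\delta/10$. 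If it proves delicate, I will simply carry $\sigma_j\defeq\norms{\mx^{(j)\top}\vxi^{(j)}}_\infty$ through the argument and verify $\sigma_j=O(r)$ at the end.

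\textbf{Phase 1.} By Proposition~\ref{prop:RIP_sub_gaussian}, $\mx^{(1)}$ is $(0.14,3k)$-RIP with probability $1-\delta/10$. Lemma~\ref{lem:IHT_l2_recover} with target accuracy $\sqrt k r$ then gives $\norms{\vhth-\vths}_2\le \sqrt k r+5\sqrt{3k}\,\sigma_1 = O(\sqrt k r)$ and $|\supp(\vhth)|\le k$. Set $\bvth\defeq\vths-\vhth$, which is $2k$-sparse with $\norms{\bvth}_2/\sqrt{2k}=O(r)$. The residuals satisfy $\vr^{(j)}=\mx^{(j)}\bvth+3\vxi^{(j)}$ for $j=2,3$. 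Since $\bvth$ depends only on block $1$ and $\vxi$, the pair $(\bvth,\vxi^{(j)})$ is independent of $\mx^{(j)}$, so we are in Model~\ref{model:for_each} for blocks $2,3$ with sparsity $2k$.

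\textbf{Phase 2.} Apply Lemma~\ref{lem:large_coord_est} to block $2$ with signal $\bvth$ and sparsity $2k$, using Lemma~\ref{lem:fix_param_indep_model} to condition on $\bvth$. We need the threshold $r/c\ge \norms{\bvth}_2/\sqrt{2k}+3\sigma_2$. The right-hand side is $O(r)$, so this holds for a suitably small constant $c$ (and $c$ is chosen independently of the other constants). The lemma yields $L\subseteq\supp(\bvth)$, hence $|L|\le 2k$, and $\norms{\bvth_{L^c}}_\infty\le 4r/c=O(r)$. Since $L$ is a function of block $2$ and $\bvth$, it is independent of $\mx^{(3)}$.

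\textbf{Phase 3.} Apply Lemma~\ref{lem:OLS_bound} to block $3$ with signal $\bvth$, support $L\perp\mx^{(3)}$, and sparsity $2k$. This gives $\norms{\bvth_L-[\mx^{(3)\top}\mx^{(3)}]^{-1}_{L\times L}\mx^{(3)\top}_{L:}\vr^{(3)}}_\infty\le 8\sigma_3+\norms{\bvth}_2/\sqrt{2k}=O(r)$. The output is $\vth=\vhth+\text{(OLS on }L)$. On $L$ this gives $|\vth_i-\vths_i|=O(r)$, and off $L$ we have $|\vth_i-\vths_i|=|\bvth_i|=O(r)$ from Phase 2. Also $|\supp(\vth)|\le |\supp(\vhth)|+|L|\le 3k$. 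A union bound over the $O(1)$ failure events finishes the argument.

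\textbf{Runtime.} IHT performs $\lceil\log_2\frac{R}{\sqrt k r}\rceil$ iterations, each costing $O(nd)$ for the matrix-vector products plus linear-time selection for $H_k$. Computing the residuals and $\mx^{(2)\top}\vr^{(2)}$ costs $O(nd)$. For the OLS step, $\mx^{(3)}_{:L}$ is well-conditioned by RIP, so conjugate gradient on the $|L|\le 2k$ system reaches accuracy $r/\text{poly}$ in $O(\log\frac{Rk}{r})$ iterations of cost $O(nk)$. The inexactness is absorbed into the $O(r)$ bound. The total is $O(nd\log\frac Rr+nk\log\frac{Rk}{r})$.
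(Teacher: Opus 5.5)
Your three-phase argument is the paper's proof. It uses the same sample split and the same applications of Lemma~\ref{lem:IHT_l2_recover}, Lemma~\ref{lem:large_coord_est} and Lemma~\ref{lem:OLS_bound}, with sparsity $2k$ and signal $\bvth = \vths - \vhth$. The independence bookkeeping is the same ($\bvth \perp (\mx^{(2)}, \mx^{(3)})$ and $L \perp \mx^{(3)}$). So is the comparison of the per-block noise levels $\sigma_j$ to $\norms{\mx^\top\vxi}_\infty$, via a sub-Gaussian union bound plus the anticoncentration bound $\norms{\mx^\top\vxi}_\infty = \Omega(\norms{\vxi}_2/\sqrt n)$. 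One small slip: to get block entries of variance $\frac 3 n$, as you state, the rescaling must be by $\sqrt 3$, not $3$. Algorithm~\ref{alg:IHT+OLS} has the same typo; compare the $\sqrt{2T}$ in Algorithm~\ref{alg:oblivious-general}.

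The step you flag is a genuine gap in the proposal, and your fallback does not close it. A union bound over the $d$ coordinates of $\mx^{(j)\top}\vxi^{(j)}$ gives only
$\sigma_j = O(\norms{\vxi}_2\sqrt{\log(d/\delta)/n}) = O(\norms{\mx^\top\vxi}_\infty\sqrt{\log(d/\delta)})$.
In the allowed regime $\log\frac d\delta$ can be of order $\frac nk$, which can far exceed $\log\frac n\delta$.

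Carrying $\sigma_j$ along and verifying $\sigma_j = O(r)$ at the end is circular. The only hypothesis on $r$ is $r \ge \norms{\mx^\top\vxi}_\infty\sqrt{\log(n/\delta)}$, so that verification is the very same comparison. Everything downstream depends on it: the warm start $\norms{\bvth}_2 = O(\sqrt k\, r)$, the threshold condition $r/c \ge \norms{\bvth}_2/\sqrt{2k} + 3\sigma_2$, and the OLS bound.

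The paper's proof does not supply the missing idea either. It obtains the $\sqrt{\log(n/\delta)}$ factor by describing each $\norms{\mx^{(i)\top}\vxi^{(i)}}_\infty$ as a maximum of $\frac n3$ or $n$ sub-Gaussians, which miscounts the $d$ coordinates being maximized over.

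What your argument, and the paper's, actually establishes is one of two weaker forms:
\begin{itemize}
\item the theorem with $\sigma$ redefined as $\norms{\mx^\top\vxi}_\infty\sqrt{\log(d/\delta)}$; or
\item the guarantee under the hypothesis $r \gtrsim \max_{j \in [3]}\norms{\mx^{(j)\top}\vxi^{(j)}}_\infty$, which is the form identified in the paper's remark after the proof.
\end{itemize}
The stated $\log\frac n\delta$ version would need a block-versus-total comparison that goes beyond a coordinatewise union bound.
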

\begin{proof}
By Proposition~\ref{prop:RIP_sub_gaussian}, with probability $\ge 1 - \frac \delta 4$, $\mx^{(1)}, \mx^{(2)}, \mx^{(3)}$ are all $(0.14, 3k)$-RIP. Moreover, we claim that with probability $\ge 1 - \frac \delta 4$, for all $i \in [3]$, there exists a constant $A > 0$ such that
\begin{equation}\label{eq:each_obs_bound}\norm{\mx^{(i)\top}\vxi^{(i)}}_\infty \le A \sigma.\end{equation}
To see this, each of the relevant quantities is the maximum of either $\frac n 3$ or $n$ i.i.d.\ sub-Gaussian random variables.
Standard sub-Gaussian concentration (see the derivation of $(b)$ in Lemma~\ref{lem:equiv_err}) shows that with probability $\frac \delta 4$, 
\begin{equation}\label{eq:compare_upper_lower}
\norm{\mx^{(i)\top}\vxi^{(i)}}_\infty  =O\Par{ \norm{\vxi^{(i)}}_2 \sqrt{\frac{\log \frac n \delta} n}} = O\Par{\norm{\vxi}_2\sqrt{\frac{\log \frac n \delta} n}} = O\Par{\norm{\mx^\top \vxi}_\infty\sqrt{\log \frac n \delta}}.
\end{equation}

Next, we analyze the three stages of Algorithm~\ref{alg:IHT+OLS} in turn.
By Lemma~\ref{lem:IHT_l2_recover}, $\vhth$ returned on Line~\ref{line:iht} has
\begin{equation}\label{eq:stage1_bound}\norm{\vhth - \vths}_2 \le 5\sqrt{3k} A\sigma + \sqrt k r \le 10A\sqrt k r.\end{equation}
In the remainder of the proof, let $\bvth \defeq \vths - \vhth$. Note that the pairs $(\mx^{(2)}, \vr^{(2)})$ and $(\mx^{(3)}, \vr^{(3)})$ are exactly instances of Model~\ref{model:for_each} with sparsity parameter $k \gets 2k$, and the signal-noise pairs $(\bvth, \vxi^{(2)})$ and $(\bvth, \vxi^{(3)})$ respectively. Moreover, the learned support $L$ from Line~\ref{line:support_id} satisfies $L \perp \mx^{(3)}$.

Next we apply Lemma~\ref{lem:large_coord_est}. We first check its preconditions hold: indeed,
    \[3\norm{\mx^{(2)\top}\vxi^{(2)}}_\infty + \frac{\norm{\bvth}_2}{\sqrt k} \le 13 A r, \]
    so it is enough to choose $\frac 1 c \ge 13A$. Then, with probability $\ge 1 - \frac \delta 4$, Lemma~\ref{lem:large_coord_est} gives
    \[L \subseteq \supp(\bvth),\quad \norm{\bvth_{L^c}}_\infty \le \frac{4r}{c}.\]
    Finally, applying Lemma~\ref{lem:OLS_bound}, with probability $\ge 1 - \frac{\delta}{4}$, 
    \begin{equation*}
        \infnorm{\bvth_{L} - [\mx^{(3)\top}\mx^{(3)}]^{-1}_{L \times L} \mx^{(3)\top}_{L:}\vr^{(3)}}\le 8C'\sigma + 10Ar \le (10A + 8C')r. 
    \end{equation*}
    Summing the above two displays thus yields for the output $\vth$,
    \[\norm{\vth - \vths}_\infty = \norm{\vth - (\bvth + \vhth)}_\infty \le \norm{\bvth_{L^c}}_\infty + \norm{\bvth_L - (\vth - \vhth)}_\infty = O(r). \]
    Finally, to prove the sparsity claim, it suffices to note that both the support of $\vhth$ output by $\IHT$, and the learned support $L$, have cardinality at most $O(k)$.

    We conclude by discussing the runtime. All steps in Algorithm~\ref{alg:IHT+OLS} clearly take time at most $O(nd\log \frac R r)$
    except for the OLS step, Line~\ref{line:ols}. Whenever Lemma~\ref{lem:large_coord_est} holds, $|L| \le k$, so under our stated sample complexity, the matrix $\ma \defeq [\mx^{(3)\top}\mx^{(3)}]_{L \times L}$ is $O(1)$-well-conditioned. Our goal is to compute $\ma^{-1} \vb$ up to $O(r)$ in $\ell_\infty$ error, where $\vb = \mx^{(3)\top}\vr^{(3)}$. Further, 
    \[\norm{\vb}_2 \le \norm{\mx^{(3)\top}\mx^{(3)} \vhth}_2 + \norm{\mx^{(3)\top}\vxi^{(3)}}_2 \le 2\norm{\vhth}_2 + Ar \le 2R +  20A\sqrt k r + Ar,\]
    where we used the triangle inequality, RIP and the assumption \eqref{eq:each_obs_bound}, and the recovery guarantee \eqref{eq:stage1_bound} respectively in the three inequalities. Standard bounds on gradient descent, e.g., Theorem 2.1.15, \cite{Nesterov03}, then show that $\log(\frac {kR} r)$ iterations yield a sufficiently-accurate solution. Each iteration requires $O(1)$ matrix-vector multiplications through $\mx^{(3)}_{:L}$, each taking $O(nk)$ time.

\end{proof}

The polylogarithmic overhead in the error of Theorem~\ref{thm:l_inf_norm_bound_model1} is due to our use of a small amount of adaptivity, leading to a requirement of the form \eqref{eq:each_obs_bound} to compare each phase's error to the overall observations. Examining the proof, it is clear we can replace the error bound with
\[O\Par{\max_{i \in [3]} \norm{\mx^{(i)\top}\vxi^{(i)}}_\infty}.\]
In general, as long as each individual $\norms{\mx^{(i)\top}\vxi^{(i)}}_\infty$ is comparable to $\norms{\mx^\top \vxi}_\infty$, then we have the tight requirement $r = \Omega(\norms{\mx^\top \vxi}_\infty)$. We also point out that for certain popular random matrix models, we can obtain this improvement directly in some parameter regimes. For example, if $\mx$ is entrywise a (scaled) Gaussian, and $\delta = \poly(\frac 1 k)$, then using tight bounds for the maximum of $k$ Gaussians in place of \eqref{eq:compare_upper_lower} again yields a tight bound $r = \Omega(\norms{\mx^\top \vxi}_\infty)$.

We also mention here that it is simple to boost any improper hypothesis for Problem~\ref{prob:linfty_sparse} into a proper (i.e., exactly $k$-sparse) hypothesis, at a constant factor overhead in the recovery guarantee.

\begin{lemma}\label{lem:proper}
Let $\vths \in \R^d$ have $\nnz(\vths) \le k$. Then for any $\vth \in \R^d$ and $p \in \R_{\ge 1} \cup \{\infty\}$,
\[\norm{H_k(\vth) - \vths}_p \le 2\norm{\vth - \vths}_p.\]
\end{lemma}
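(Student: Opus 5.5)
The plan is to compare $H_k(\vth)$ against $\vth$ coordinatewise, and to use that $H_k(\vth)$ is a best $k$-sparse approximation of $\vth$ in every $\ell_p$ norm. The most direct route is the triangle inequality
\[\norm{H_k(\vth) - \vths}_p \le \norm{H_k(\vth) - \vth}_p + \norm{\vth - \vths}_p,\]
combined with the optimality claim $\norm{H_k(\vth) - \vth}_p \le \norm{\vth - \vths}_p$. The latter holds because $\vths$ is itself $k$-sparse, so it suffices to show that $H_k(\vth)$ minimizes $\norm{\vth - \vv}_p$ over all $\vv$ with $\nnz(\vv) \le k$. Adding the two bounds gives the factor $2$.

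To establish the optimality claim, fix any $\vv$ with $T \defeq \supp(\vv)$ and $|T| \le k$. First observe that $\norm{\vth - \vv}_p \ge \norm{\vth_{T^c}}_p$, since on $T^c$ the vector $\vth - \vv$ agrees with $\vth$, and dropping coordinates does not increase any $\ell_p$ norm, including $p = \infty$. Next, let $K$ be the index set kept by $H_k$. Then $\vth - H_k(\vth) = \vth_{K^c}$, extended by zeros. Because $K$ consists of the $k$ largest-magnitude coordinates and $|T| \le k$, the multiset of magnitudes $\{|\vth_i|\}_{i \in K^c}$ is dominated entrywise, after sorting, by $\{|\vth_i|\}_{i \in T^c}$. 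More precisely, the $m$-th largest entry of $\vth_{K^c}$ is at most the $m$-th largest entry of $\vth_{T^c}$, and $|K^c| \le |T^c|$. By monotonicity of $\ell_p$ norms under such domination, $\norm{\vth_{K^c}}_p \le \norm{\vth_{T^c}}_p$.

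Chaining these gives $\norm{\vth - H_k(\vth)}_p \le \norm{\vth - \vv}_p$, and taking $\vv = \vths$ finishes the argument. The only step requiring care is the domination claim. I would prove it by noting that $T^c$ contains at least $d-k$ indices, and for each rank $m$ the $m$-th largest magnitude outside $T$ is at least the $(m+k)$-th largest magnitude overall, which equals the $m$-th largest magnitude outside $K$. Ties are handled by the lexicographic convention and do not affect magnitudes. This step is elementary, so I expect no real obstacle; the case $p = \infty$ is covered by the same reasoning applied to the largest entry alone.
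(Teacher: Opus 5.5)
Your proposal is correct and follows the paper's proof: you combine the triangle inequality with the fact that $H_k(\vth)$ is an $\ell_p$-projection of $\vth$ onto $k$-sparse vectors, so that $\norm{H_k(\vth) - \vth}_p \le \norm{\vth - \vths}_p$. The paper only asserts this projection property, whereas your rank-domination argument (removing at most $k$ entries shifts sorted ranks by at most $k$) actually proves it.
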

\begin{proof}
Because $H_k(\vth)$ is the projection of $\vth$ onto the set of $k$-sparse vectors under the $\ell_p$ norm,
\[\norm{H_k(\vth) - \vths}_p \le \norm{H_k(\vth) - \vth}_p + \norm{\vth - \vths}_p \le 2\norm{\vth - \vths}_p.\]
\end{proof}

\subsection{Expanding the \texorpdfstring{$r$}{r} range in Theorem~\ref{thm:l_inf_norm_bound_model1}}\label{ssec:expand_r}

A potential shortcoming of Theorem~\ref{thm:l_inf_norm_bound_model1} is that it requires the lower bound $r \gtrsim \norms{\mx^\top \vxi}_\infty$ to hold, and its guarantees do not formally apply otherwise. In this section, we give a simple black-box reduction that lifts this requirement, yielding error guarantee $\lesssim r + \norms{\mx^\top \vxi}_\infty$ for \emph{any} $r > 0$. This matches the form of our $\IHT$ bounds in Lemma~\ref{lem:IHT_l2_recover} and Theorem~\ref{thm:linf_iht}.

Our reduction (Algorithm~\ref{alg:oblivious-general}) maintains a threshold $\rho$, and applies Theorem~\ref{thm:l_inf_norm_bound_model1} in phases. In each phase it then uses holdout samples to check whether a certain test passes. Finally, it outputs the estimated parameters obtained on the last phase that passed.
\begin{algorithm}[ht]
\DontPrintSemicolon
    \caption{$\OSRR(\mx, \vy, k, R, r, c')$}
    \label{alg:oblivious-general}
     $\vth^{(0)} \gets \vzero_d$;
    \If{$r \ge R$}{
    \Return {$\vth^{(0)}$}
    }
    $T \gets \lceil\log_2 \frac{R}{r}\rceil$\;
    $c \gets $ constant in the statement of Theorem~\ref{thm:l_inf_norm_bound_model1}\;
    $\rho^{(0)} \gets R$\;
    $(\mx, \vy) \gets \sqrt{2T}(\mx,\vy)$\;\label{line:scaling_rip}
    Evenly divide observations $(\mx, \vy)$ into $\mx = \begin{bmatrix}
        \mx^{(1)}\\
        \vdots\\
        \mx^{(2T)}\\
    \end{bmatrix}, \vy = \begin{bmatrix}
        \vy^{(1)}\\
        \vdots\\
        \vy^{(2T)}
    \end{bmatrix}$\;    
    \For {$t = 0, 1, \ldots, T-1$}{
    $\rho^{(t + 1)} \gets \frac {\rho^{(t)}} 2$\;
    $\vth^{(t+1)} \gets H_k(\OSR(\mx^{(2t+1)}, \vy^{(2t+1)}, k, R, \rho^{(t + 1)}, c))$\;
    \If{$\norms{(\mx^{(2t+2)})^\top(\vy^{(2t+2)} - \mx^{(2t+2)}\vth^{(t+1)})}_\infty>\frac 1 {c'}\rho^{(t + 1)}$\label{line:test_holdout}}{
    \Return {$\vth \gets \vth^{(t)}$}
    }
    $t\gets t+1$\;
    }
    \Return{$\vth \gets \vth^{(T)}$}
\end{algorithm}

We require one helper result in the analysis of Algorithm~\ref{alg:oblivious-general}.

\begin{lemma}\label{lem:preserve_norm_indep}
Let $\delta \in (0, 1)$, and let $\vv \in \R^d$ be fixed with $\nnz(\vv) \le k$. If $n = \Omega(k\log \frac d \delta)$ for an appropriate constant, then under Model~\ref{model:for_each}, with probability $\ge 1 - \delta$,
\[\frac{\norm{\mx^\top \mx \vv}_\infty}{\norm{\vv}_\infty} \in \Brack{\half, 2}.\]
\end{lemma}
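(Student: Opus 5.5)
We need $\|\mx^\top\mx\vv\|_\infty \in [\tfrac12,2]\|\vv\|_\infty$ for a fixed $k$-sparse $\vv$, with $\mx$ independent of $\vv$ and $n = \Omega(k\log\frac d\delta)$. The natural route is to expand coordinatewise and control each cross term with Corollary~\ref{cor:cross_term_indep}, as in Lemma~\ref{lem:large_coord_est}. Let $S \defeq \supp(\vv)$. For each $i \in [d]$,
\[
[\mx^\top\mx\vv]_i = \norm{\vx_i}_2^2\vv_i + \inprod{\vx_i}{\mx_{:-i}\vv_{-i}}.
\]
Here $\vx_i$ is independent of $\mx_{:-i}\vv_{-i}$, and on the RIP event $\norm{\mx_{:-i}\vv_{-i}}_2 \le 2\norm{\vv}_2 \le 2\sqrt k\norm{\vv}_\infty$. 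Corollary~\ref{cor:cross_term_indep}, applied with failure probability $\frac{\delta}{2d}$ and followed by a union bound over $i\in[d]$, then gives
\[
\Abs{\inprod{\vx_i}{\mx_{:-i}\vv_{-i}}} \le O\Par{\sqrt{\tfrac{k\log(d/\delta)}{n}}}\norm{\vv}_\infty \le \eta\norm{\vv}_\infty
\]
for every $i$, where $\eta$ is any small constant fixed by the constant in $n=\Omega(k\log\frac d\delta)$.

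The norm of $\vx_i$ itself must be controlled at constant accuracy $1\pm\eps$ with small $\eps$, not merely within $[\tfrac12,\tfrac32]$. Proposition~\ref{prop:RIP_sub_gaussian} gives $(\eps,k)$-RIP for a small constant $\eps$ (say $\eps = 0.1$) with probability $1-\frac\delta2$ at this sample size. This yields both $\norm{\vx_i}_2^2\in[1-\eps,1+\eps]$ and the bound $\norm{\mx_{:-i}\vv_{-i}}_2\le\sqrt{1+\eps}\,\norm{\vv}_2$ used above.

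For the upper bound, every coordinate satisfies $|[\mx^\top\mx\vv]_i| \le (1+\eps)|\vv_i| + \eta\norm{\vv}_\infty \le (1+\eps+\eta)\norm{\vv}_\infty \le 2\norm{\vv}_\infty$. For the lower bound, take $i^\star\in\arg\max_i|\vv_i|$. Then $|[\mx^\top\mx\vv]_{i^\star}| \ge (1-\eps-\eta)\norm{\vv}_\infty \ge \tfrac12\norm{\vv}_\infty$. Taking a union bound over the RIP event and the $d$ cross-term events gives total failure probability at most $\delta$.

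The only delicate point is that Corollary~\ref{cor:cross_term_indep} requires independence between $\vx_i$ and $\mx_{:-i}\vv_{-i}$, together with an almost-sure norm bound. The norm bound only holds on the RIP event, which depends on $\vx_i$ as well. To avoid this circularity I would bound the norm directly. For fixed $i$, $\mx_{:-i}\vv_{-i}$ is a fixed sparse combination of columns other than $i$, so its squared norm concentrates by a Hanson--Wright or sub-Gaussian norm bound applied to $\mx_{:S\setminus\{i\}}$ alone. That estimate is independent of $\vx_i$. Alternatively, one can condition on $\mx_{:-i}$, work on the event $\{\norm{\mx_{:-i}\vv_{-i}}_2\le 2\norm{\vv}_2\}$, which is measurable with respect to $\mx_{:-i}$, and apply Lemma~\ref{lem:cross_term_fix} conditionally, as in Lemma~\ref{lem:fix_param_indep_model}. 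I expect this step to be the main, though mild, obstacle; everything else is routine.
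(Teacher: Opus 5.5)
Your proposal is correct and matches the paper's proof: $(\frac14,k)$-RIP controls $\norm{\vx_i}_2^2$ and $\norm{\mx\vv}_2$, the cross term $\inprod{\vx_i}{\mx_{:-i}\vv_{-i}}$ is bounded by sub-Gaussian concentration with a union bound over $i\in[d]$, and the maximizing coordinate $i^\star$ gives the lower bound. The only differences are cosmetic. You treat on- and off-support coordinates with a single leave-one-out decomposition instead of the paper's two cases, and your conditioning on the $\mx_{:-i}$-measurable norm event is more careful than the paper, which simply applies Hoeffding after conditioning on RIP.
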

\begin{proof}
The argument follows the same structure as Lemma~\ref{lem:noise_linf_bound_fix}, but is simpler.
By Proposition~\ref{prop:RIP_sub_gaussian}, with probability at least $1-\frac \delta  2$, the matrix $\mx$ is $(\frac 1 4,k)$-RIP. Let $S=\supp(\vv)$ with $|S|\le k$. 

Under $(\frac 1 4,k)$-RIP, $\|\mx\vv\|_2^2\le 2\|\vv\|_2^2\le 2k\|\vv\|_\infty^2$.
For any $i\notin S$, since $\vx_i\perp \mx_{:S}$, by Lemma~\ref{lem:hoeffding}, 
\begin{equation}\label{eq:offsupport_bound}
\prob{ \Abs{\Brack{\mx^\top \mx \vv}_i} > \frac{1}{4}\infnorm{\vv} } \le 2\exp\paren{-\frac{n}{2Ck}} \le \frac{\delta}{2d}, \text{ for all } i \not\in S,
\end{equation}
where $C$ is as in Model~\ref{model:subg},
and the last inequality takes $n = \Omega(k\log\tfrac{d}{\delta})$ sufficiently large. 

Next, fix $j\in S$ and write $T_j = S \backslash \cbra{j}$. Then
\begin{equation*}
    \innp{\vx_j, \mx_{:S}\vv} = \twonorm{\vx_j}^2\vv_1 + \innp{\vx_j, \mx_{:T_j}\vv_{-j}}.
\end{equation*}
Since $\vx_j\perp\mx_{:T_j}$, the second term satisfies the same sub-Gaussian bound as above, hence
\begin{equation}\label{eq:onsupport_bound}
\prob{ \Abs{\innp{\vx_j, \mx_{:T_j}\vv_{-j}}} > \frac{1}{4}\infnorm{\vv} } \le \frac{\delta}{2d}, \text{ for all } j \in S.
\end{equation}
By a union bound over all $d$ events in \eqref{eq:offsupport_bound}, \eqref{eq:onsupport_bound}, and using $\twonorm{\vx_j}^2 \le \frac 5 4$ for all $j \in S$, we then have
\[
\max_{i\not\in S}\Abs{\Brack{\mx^\top \mx \vv}_i} \le \frac{1}{4}\infnorm{\vv}, \quad \max_{j \in S} \Abs{\Brack{\mx^\top \mx \vv}_j} \le \frac{3}{2}\infnorm{\vv},
\]
which yields the claimed upper bound. 
For the lower bound, let $j^\star\in S$ satisfy $|\vv_{j^\star}|=\|\vv\|_\infty$. Using $\|\vx_{j^\star}\|_2^2\ge \frac 3 4$ and the deviation bound in \eqref{eq:onsupport_bound},
\[
\Abs{\Brack{\mx^\top \mx \vv}_{j^\star}} \ge \frac{3}{4}\Abs{\vv_{j^\star}} - \frac{1}{4}\infnorm{\vv} = \frac{1}{2}\infnorm{\vv}.
\]

\end{proof}

We now prove that Algorithm~\ref{alg:obs_gen} obtains an  $\ell_\infty$ recovery guarantee of the form $\approx \norms{\mx^\top \vxi}_\infty + r$. For simplicity, we assume the error bound in Theorem~\ref{thm:l_inf_norm_bound_model1} only applies in the regime
\[r \ge \norm{\mx^\top \vxi}_\infty\sqrt{\log \frac n \delta},\]
although we inherit similar improvements under certain models (e.g., Gaussian measurements) where Theorem~\ref{thm:l_inf_norm_bound_model1} offers a tighter error bound, see the discussion after the proof.

\begin{corollary}
    \label{cor:l_inf_norm_bound_model1-general}
Let $\delta \in (0, \half)$, $n = \Omega(kT \log \frac {dT} \delta)$ for an appropriate constant, and $k \in [d]$, where $T = O(\log \frac R r)$ is as in Algorithm~\ref{alg:oblivious-general}. Under Models~\ref{model:for_each} and~\ref{model:subg}, there is a universal constant $c' > 0$ such that if Algorithm~\ref{alg:oblivious-general} is called with parameters $c'$, $R \ge \norm{\vths}_2$, and $r > 0$, 
then with probability $\ge 1 - \delta$, its output satisfies
\begin{equation}
    \label{eq:l_inf_norm_bound_model1_expand_r}
    \infnorm{\vth - \vths} = O\paren{r + \norms{\mx^\top \vxi}_\infty\sqrt{\log \Par{\frac n \delta}\log\Par{\frac R r}}}.
\end{equation}
\end{corollary}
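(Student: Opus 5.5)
The plan is to analyze Algorithm~\ref{alg:oblivious-general} phase by phase, showing that whenever $\rho^{(t+1)}$ is large relative to the noise, the phase-$t$ estimate is good and the holdout test passes. The test can fail only once $\rho^{(t+1)}$ has fallen to the noise scale, and even then the previously accepted iterate is still good.

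Set $\sigma_\star \defeq \norms{\mx^\top \vxi}_\infty\sqrt{\log\frac{nT}{\delta}}$. After the rescaling on Line~\ref{line:scaling_rip}, each block $\mx^{(s)}$ follows Model~\ref{model:subg} with $\frac{n}{2T} = \Omega(k\log\frac{dT}{\delta})$ rows. As in \eqref{eq:compare_upper_lower}, a union bound over all $2T$ blocks shows that, with probability $\ge 1-\frac\delta 4$, every block satisfies $\norms{\mx^{(s)\top}\vxi^{(s)}}_\infty \le A\sigma_\star$. Here a $\sqrt{\log T}$ loss is absorbed into the $\sqrt{\log(R/r)}$ factor of \eqref{eq:l_inf_norm_bound_model1_expand_r}. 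Also, each block independently satisfies the $\ell_\infty$-norm preservation of Lemma~\ref{lem:preserve_norm_indep} for any fixed $2k$-sparse vector.

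The key observation is that $\vth^{(t+1)}$ depends only on blocks $2t'+1$ with $t' \le t$ and on the pass/fail outcomes of earlier holdout blocks. Hence $\vv_t \defeq \vth^{(t+1)} - \vths$ is $2k$-sparse and independent of $\mx^{(2t+2)}$. This lets us apply Lemma~\ref{lem:preserve_norm_indep} to $\vv_t$ via Lemma~\ref{lem:fix_param_indep_model}, with failure probability $\frac{\delta}{4T}$ per phase. We then get
\[
(\mx^{(2t+2)})^\top(\vy^{(2t+2)} - \mx^{(2t+2)}\vth^{(t+1)}) = -\mx^{(2t+2)\top}\mx^{(2t+2)}\vv_t + \mx^{(2t+2)\top}\vxi^{(2t+2)},
\]
so the test statistic $\tau_t$ satisfies
\[
\tfrac12\norm{\vv_t}_\infty - A\sigma_\star \le \tau_t \le 2\norm{\vv_t}_\infty + A\sigma_\star .
\]

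Call phase $t$ \emph{good} if $\rho^{(t+1)} \ge \sigma_\star$. By Theorem~\ref{thm:l_inf_norm_bound_model1} (failure $\frac{\delta}{4T}$ per phase) together with Lemma~\ref{lem:proper}, every good phase has $\norms{\vv_t}_\infty \le C_1\rho^{(t+1)}$. Choosing $c'$ small enough that $\frac{1}{c'} > 2C_1 + A$, the test passes in every good phase. Consequently, if the algorithm returns $\vth^{(t)}$ after a failed test at phase $t$, two cases arise.
\begin{itemize}
\item If $t = 0$, then phase $0$ is not good, so $R/2 < \sigma_\star$, and the output $\vzero$ has error $\norms{\vths}_\infty \le R = O(\sigma_\star)$.
\item If $t \ge 1$, then phase $t-1$ passed. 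The lower bound on $\tau_{t-1}$ gives $\norms{\vv_{t-1}}_\infty \le 2(\frac{1}{c'}\rho^{(t)} + A\sigma_\star)$. Since phase $t$ failed, it was not good, so $\rho^{(t)} = 2\rho^{(t+1)} < 2\sigma_\star$. Hence the error is $O(\sigma_\star)$.
\end{itemize}
If the algorithm never fails, the final output has error $O(\rho^{(T)} + \sigma_\star) = O(r + \sigma_\star)$. This holds either by the same passed-test bound applied to phase $T-1$, or directly from Theorem~\ref{thm:l_inf_norm_bound_model1} when that phase is good, using $\rho^{(T)} \le r$.

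The main obstacle is the independence bookkeeping. Lemma~\ref{lem:preserve_norm_indep} and Theorem~\ref{thm:l_inf_norm_bound_model1} must be invoked on blocks that are independent of the random quantities fed into them, and we need to check that the data-dependent stopping time does not break this. It does not, because the event ``the test passes at phase $t$'' is measurable with respect to blocks $\le 2t+2$, and we union bound over the $2T$ deterministic events per phase rather than conditioning on the stopping time. A smaller technicality is matching the noise parameter of each block to $\norms{\mx^\top\vxi}_\infty$, which costs the stated logarithmic factors.
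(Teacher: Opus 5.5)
\paragraph{Verdict.} Your overall structure matches the paper's proof, but the per-block noise bound it rests on is false as stated. The shared structure is: good phases pass the holdout test, a failed test forces $\rho$ down to the noise scale, and the returned iterate is certified by applying the lower bound in Lemma~\ref{lem:preserve_norm_indep} to the last passed test. Your handling of independence, the $t=0$ case, and the never-fails case is actually more careful than the paper's.

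\paragraph{The gap.} The step that fails is $\norms{\mx^{(s)\top}\vxi^{(s)}}_\infty\le A\sigma_\star$ with $\sigma_\star=\norms{\mx^\top\vxi}_\infty\sqrt{\log(nT/\delta)}$. Your definition of a good phase, the test-passing claim, and both case bounds all depend on it. Line~\ref{line:scaling_rip} multiplies $\vy$, and hence the noise, by $\sqrt{2T}$, not just $\mx$. Writing $B_s$ for the rows of block $s$, the block instance has noise $\vxi^{(s)}=\sqrt{2T}\,\vxi_{B_s}$. So $\mx^{(s)\top}\vxi^{(s)}=2T\,\mx_{B_s:}^\top\vxi_{B_s}$, whose coordinates have sub-Gaussian scale of order $T\norms{\vxi_{B_s}}_2/\sqrt n$, not the $\norms{\vxi}_2/\sqrt n$ of $\mx^\top\vxi$. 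Since $\max_s\norms{\vxi_{B_s}}_2^2\ge\norms{\vxi}_2^2/(2T)$, some block is always at least a factor $\sqrt{2T}$ noisier in scale. Two concrete cases:
\begin{itemize}
\item For Gaussian $\mx$ and i.i.d.\ Gaussian noise, every block has $\norms{\mx^{(s)\top}\vxi^{(s)}}_\infty\approx\sqrt{2T}\,\norms{\mx^\top\vxi}_\infty$.
\item For Rademacher $\mx$ with $\vxi=\ve_1$, the block containing row $1$ has exactly $2T\norms{\mx^\top\vxi}_\infty$.
\end{itemize}
In both cases your inequality fails once $T$ is large compared to $\log(n/\delta)$, which the hypotheses permit.

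\paragraph{Where the $\sqrt{\log(R/r)}$ factor comes from.} This blowup is what the $\sqrt{\log(R/r)}\approx\sqrt{T}$ factor in the statement pays for. It is not a $\sqrt{\log T}$ union-bound loss: that loss is essentially free, since $T\le n$ gives $\log(nT/\delta)\le 2\log(n/\delta)$. A warning sign is that your argument as written would prove the strictly stronger bound $O(r+\norms{\mx^\top\vxi}_\infty\sqrt{\log(n/\delta)})$.

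\paragraph{Repair.} Use a noise scale that dominates every rescaled block, as the paper does with $\sigma=\norms{\mx^\top\vxi}_\infty\sqrt{\log(n/\delta)\log(R/r)}$ in \eqref{eq:noise_compare}. With $\sigma_\star$ replaced by such a scale, your case analysis goes through verbatim and coincides with the paper's. Be aware that the $\sqrt{T}$ accounting implicitly assumes the noise energy is spread roughly evenly over the blocks. For concentrated noise, as in the Rademacher example, the factor is $2T$. A fully rigorous version should therefore state the bound in terms of $\max_s\norms{\mx^{(s)\top}\vxi^{(s)}}_\infty$, in the spirit of the remark following Theorem~\ref{thm:l_inf_norm_bound_model1}.
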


\begin{proof}
We begin by noting that we scaled all of the divided measurements, $(\mx^{(i)}, \vy^{(i)})$ for $i \in [2T]$, by $\sqrt{2T}$, to ensure that they fall in the setting of Model~\ref{model:subg} after adjusting for the smaller sample size $\frac{n}{2T}$.
Let $C > 0$ be such that Theorem~\ref{thm:l_inf_norm_bound_model1} and Lemma~\ref{lem:proper} guarantee that 
\begin{equation}\label{eq:good_run}
\rho^{(t)} \ge \sig \defeq \norm{\mx^\top \vxi}_\infty \sqrt{\log \Par{\frac {n} \delta}\log\Par{\frac R r}}
\implies
\norm{\vth^{(t)} - \vths}_\infty \le C\rho^{(t)},\end{equation}
and such that in every iteration $t \in [T]$,
\begin{equation}\label{eq:noise_compare}
\begin{gathered}
\norm{\mx^{(2t)\top}\vxi^{(2t)}}_\infty \le C\sigma,\quad \frac{\norm{\mx^{(2t)\top} \mx^{(2t)}(\vth^{(2t)} - \vths)}_\infty}{\norm{\vth^{(2t)} - \vths}_\infty} \in \Brack{\half, 2}.
\end{gathered}
\end{equation}
Note that \eqref{eq:good_run} holds with probability $\ge 1 - \frac \delta 3$ by taking a union bound over Theorem~\ref{thm:l_inf_norm_bound_model1} for $T \le n$ rounds. The first bound in \eqref{eq:noise_compare} also holds with probability $\ge 1 - \frac \delta 3$ by a union bound for an appropriate $C$, via \eqref{eq:compare_upper_lower}. Finally, the second bound in \eqref{eq:noise_compare} holds with probability $\ge 1 - \frac \delta 3$ by Lemma~\ref{lem:preserve_norm_indep} and $\nnz(\vth^{(t)} - \vths) \le 2k$. 
We henceforth condition on both events, and let $\frac 1 {c'} \ge 3C$.

We next claim that in every round where $\rho^{(t + 1)} \ge \sigma$, the test in Line~\ref{line:test_holdout} will pass. This is because
    \begin{align*}
        \norm{\mx^{(2t+2)\top}(\vy^{(2t + 2)} - \mx^{(2t + 2)} \vth^{(t + 1)})}_\infty & \le \norm{\mx^{(2t+2)\top}\mx^{(2t+2)} (\vths - \vth^{(t + 1)})}_\infty + \norm{\mx^{(2t+2)\top} \vxi^{(2t + 2)}}_\infty \\
        & \le 2 \norm{\vths - \vth^{(t + 1)}}_\infty + \norm{\mx^{(2t+2)\top} \vxi^{(2t + 2)}}_\infty \\
        & \le 3C\rho^{(t + 1)} \le \frac 1 {c'} \rho^{(t + 1)}.
    \end{align*}
    Therefore, the last round where the test passes will satisfy $\rho^{(t + 1)} \le \max(r, 2\sigma)$. On this round, 
    \begin{align*}
        \norm{\vth^{(t)} - \vths}_\infty &\le 2\norm{\mx^{(2t)\top} \mx^{(2t)} (\vths - \vth^{(t)})}_\infty \\
        & \le 2 \norm{\mx^{(2t)\top}(\vy^{(2t)} - \mx^{(2t)} \vth^{(t)})}_\infty + 2 \norm{\mx^{(2t)\top} \vxi^{(2t)}}_\infty \\
        & \le \frac{\max(2\rho, 4\sigma)}{c'} + C\sigma = O\left(r+\norm{\mx^\top \vxi}_\infty\sqrt{\log\Par{ \frac n \delta}\log\Par{\frac R r}}\right).
    \end{align*}
\end{proof}

%% file: tex_file/model3.tex
\section{Adaptive \texorpdfstring{$\ell_\infty$}{L-inf} Sparse Recovery}
\label{sec:adv_noise}

In this section, we study fundamental limits and algorithms for Problem~\ref{prob:linfty_sparse} under Model~\ref{model:adv_noise}. 

We first establish that $n = \Omega(k^2 \log d)$ sub-Gaussian measurements suffice to solve adaptive $\ell_\infty$ sparse recovery. We in fact prove our upper bound result under a more general regularity condition, an $\ell_\infty$-norm variant of RIP, defined and studied in Section~\ref{ssec:linfty_rip}. Under this condition, we give our main algorithm for solving Problem~\ref{prob:linfty_sparse} in the adaptive model in Section~\ref{ssec:iht_linfty}.

In Section~\ref{ssec:linfty_lower}, we then construct a nearly-matching lower bound, which shows that sample sizes of order $n = O(k^2)$ can be fooled by adversarial instances. This rules out the possibility for any algorithm to solve the adaptive variant of Problem~\ref{prob:linfty_sparse} in this sample complexity regime. 

\subsection{\texorpdfstring{$\ell_\infty$}{L-inf}-RIP}\label{ssec:linfty_rip}
Motivated by the classical RIP (Definition~\ref{def:RIP}), we introduce an $\ell_\infty$ analog that controls the Gram matrix $\mx^\top\mx$ under a different norm. While RIP bounds the $\ell_2$ operator norm of $\mx^\top\mx-\id_d$ on sparse submatrices, $\ell_\infty$-RIP instead bounds the $\ell_\infty$ operator norm of the same restrictions. 

\begin{definition}[$\ell_\infty$-RIP]
    \label{def:linf_RIP}
    Let $(\eps, s) \in (0, 1) \times [d]$.
    We say $\mx \in \bbR^{n\times d}$ is $(\epsilon, s)$-$\ell_\infty$-RIP, if for all $S \subseteq [d]$ with $|S| \le s$, 
    \begin{equation}\label{eq:linf_rip_def}
        \norm{[\mx^\top\mx - \id_d]_{S\times S}}_{\infty \to \infty} \le \eps.
    \end{equation}
\end{definition}

The following observation shows that $\ell_\infty$-RIP implies $\ell_2$-RIP with the same parameters.

\begin{lemma}\label{lem:linfty_l2}
For any symmetric matrix $\mm \in \R^{d \times d}$, we have $\norms{\mm}_{2 \to 2} \le \norms{\mm}_{\infty \to \infty}$.
\end{lemma}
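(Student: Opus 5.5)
The plan is to compare both norms through the spectral radius, using the symmetry of $\mm$. Since $\mm$ is symmetric, the spectral theorem gives $\norms{\mm}_{2 \to 2} = \max_{\lam} |\lam|$, where the maximum ranges over the (real) eigenvalues of $\mm$. So it suffices to show that every eigenvalue satisfies $|\lam| \le \norms{\mm}_{\infty \to \infty}$.

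Fix an eigenpair $\mm \vv = \lam \vv$ with $\vv \neq \vzero_d$, and normalize so that $\norm{\vv}_\infty = 1$. Then, directly from the definition of $\norm{\cdot}_{\infty \to \infty}$,
\[|\lam| = \norm{\lam \vv}_\infty = \norm{\mm \vv}_\infty \le \norms{\mm}_{\infty \to \infty}.\]
Taking the maximum over all eigenvalues concludes the proof.

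As an alternative route, one can use Riesz--Thorin interpolation, or the elementary bound $\norms{\mm}_{2\to2}^2 \le \norms{\mm}_{1\to1}\norms{\mm}_{\infty\to\infty}$. Symmetry then gives $\norms{\mm}_{1\to1} = \norms{\mm^\top}_{\infty\to\infty} = \norms{\mm}_{\infty\to\infty}$, since the largest column $\ell_1$ norm equals the largest row $\ell_1$ norm. I do not expect any real obstacle. The only point needing care is the appeal to symmetry, which ensures that the operator $2$-norm equals the spectral radius. For general matrices the claim fails, since the $2$-norm can exceed the spectral radius.
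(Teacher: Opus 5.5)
Your proof is correct and is essentially the paper's argument: by symmetry $\norms{\mm}_{2\to2}$ is the largest $|\lam|$ over eigenvalues, and an eigenvector attaining it, normalized in $\ell_\infty$, shows $|\lam| = \norms{\mm\vv}_\infty \le \norms{\mm}_{\infty\to\infty}$. Your alternative route via $\norms{\mm}_{2\to2}^2 \le \norms{\mm}_{1\to1}\norms{\mm}_{\infty\to\infty}$, with $\norms{\mm}_{1\to1} = \norms{\mm}_{\infty\to\infty}$ by symmetry, is also valid.
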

\begin{proof}
This follows because $\lam = \norms{\mm}_{2 \to 2}$ is witnessed by some eigenvector $\vv$ with either $\mm \vv = \lam \vv$ or $\mm \vv = -\lam \vv$, both of which witness the same blowup in the $\ell_\infty$ norm.
\end{proof}

The $\ell_\infty$-RIP condition is also closely related to pairwise incoherence (PI, Definition~\ref{def:PI}). Our next lemma shows that $\frac \eps s$-PI implies $(\eps,s)$-$\ell_\infty$-RIP. The converse, however, does not hold: $\ell_\infty$-RIP permits structured correlations that are not excluded by the pairwise bounds of PI.

\begin{lemma}[PI implies $\ell_\infty$-RIP]
    \label{lem:pi_to_linf_RIP}
    For any $(\eps, s) \in (0, 1) \times [d]$, if $\mx\in \R^{n\times d}$ is $\frac{\eps}{s}$-PI, then $\mx$ is also $(\eps, s)$-$\ell_\infty$-RIP. Moreover, the converse statement is false.
\end{lemma}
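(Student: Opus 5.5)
The forward direction is a direct row-sum computation. Fix $S \subseteq [d]$ with $|S| \le s$ and write $\mm \defeq [\mx^\top \mx - \id_d]_{S \times S}$. Recall that $\norm{\mm}_{\infty \to \infty}$ is the largest $\ell_1$ norm of a row of $\mm$. Each row of $\mm$ has $|S| \le s$ entries, and by $\frac{\eps}{s}$-PI (Definition~\ref{def:PI}) each entry has magnitude at most $\frac{\eps}{s}$. So every row has $\ell_1$ norm at most $s \cdot \frac{\eps}{s} = \eps$, which is exactly \eqref{eq:linf_rip_def}. Since $S$ was arbitrary, $\mx$ is $(\eps, s)$-$\ell_\infty$-RIP.

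For the converse, I would give an explicit counterexample in which a single off-diagonal Gram entry is large but row sums stay bounded. To separate the $s = 1$ case from general $s$, I would take $s \ge 2$, say $s = 2$ (with $d \ge 2$ and $n \ge 2$). Let $\mx$ have columns with unit norm except $\vx_1 = \vx_2 = \frac{1}{\sqrt 2}\ve_1$... but this makes the diagonal entries $\half$, and the diagonal also counts toward the row sum. It is cleaner to use a Gram matrix $\mg = \mx^\top \mx$ in which $\mg_{11} = \mg_{22} = 1$ and $\mg_{12} = \eps'$ with $\frac{\eps}{2} < \eps' \le \eps$. Concretely, take $\vx_1 = \ve_1$ and $\vx_2 = \eps' \ve_1 + \sqrt{1 - \eps'^2}\,\ve_2$, and set all remaining columns to zero, or (if $d > n$) choose them so that they are orthogonal to everything relevant.

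Zero columns, however, make $[\mx^\top \mx - \id]_{jj} = -1$, which breaks $\ell_\infty$-RIP. So instead I would take $d = n = 2$: the only subsets are singletons (trivial) and $\{1,2\}$. For $\{1,2\}$, each row of the restricted matrix has $\ell_1$ norm $\eps' \le \eps$, so $\mx$ is $(\eps, 2)$-$\ell_\infty$-RIP. On the other hand, $|[\mx^\top\mx - \id]_{12}| = \eps' > \frac{\eps}{2} = \frac{\eps}{s}$, so $\mx$ is not $\frac{\eps}{s}$-PI.

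Nothing here is a real obstacle. The only care needed is choosing the counterexample so that the diagonal of $\mx^\top\mx - \id$ vanishes (unit-norm columns) and so that the dimension does not force bad extra columns. If a larger $d$ is desired, I would take $n = d$ and replace the other columns by further standard basis vectors orthogonal to $\vx_1$ and $\vx_2$.
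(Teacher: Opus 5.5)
Your proof is correct and follows the paper's argument: the forward direction is the same row-$\ell_1$-norm bound, and the converse is refuted by an explicit near-identity design with a single large off-diagonal Gram entry. Your unit-norm-column construction is slightly cleaner than the paper's $\mx = \id_d + \eps \ve_i \ve_j^\top$. That matrix's Gram matrix picks up a diagonal entry $\eps^2$, so it is literally $(\eps + \eps^2, s)$-$\ell_\infty$-RIP and needs a reparametrization. Your $[\mx^\top \mx - \id]_{S \times S}$ has zero diagonal and row sums at most $\eps$.
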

\begin{proof}
As remarked on in Section~\ref{ssec:notation}, the $\ell_\infty\to\ell_\infty$ norm of a matrix is the largest $\ell_1$ norm of any of its rows, so for any $|S| \le s$, the bound \eqref{eq:linf_rip_def} is immediate using $\frac \eps s$-PI. To disprove the converse statement, take $\mx = \id_d + \eps \ve_i\ve_j^\top$ for any $i\ne j$ and $s \ge 2$.
\end{proof}

Given Lemma~\ref{lem:pi_to_linf_RIP} and Proposition~\ref{prop:pi}, sub-Gaussian ensembles are $\ell_\infty$-RIP with high probability. We show that directly applying a union bound gives a slightly tighter guarantee.

\begin{lemma}
    \label{lem:subg_linf_rip}
    Let $(\delta, \eps) \in (0, \half)^2$ and $s \in [d]$. Under Model~\ref{model:subg}, if $n = \Omega (\frac{s^2}{\eps^2} (\log \tfrac{d}{s} + \log\tfrac{1}{\delta}))$ for an appropriate constant, then with probability $1- \delta$, $\mx$ satisfies $(\eps,s)$-$\ell_\infty$-RIP.
\end{lemma}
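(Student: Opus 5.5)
We reduce $\ell_\infty$-RIP to row-wise concentration and then union bound over rows and supports. Since $\norm{\cdot}_{\infty\to\infty}$ is the largest row $\ell_1$ norm, $(\eps,s)$-$\ell_\infty$-RIP is equivalent to requiring, for every $i \in [d]$ and every $T \subseteq [d]\setminus\{i\}$ with $|T| \le s-1$,
\[
\Abs{\norm{\vx_i}_2^2 - 1} + \sum_{j \in T} \Abs{\inprod{\vx_i}{\vx_j}} \le \eps .
\]
(Rows of submatrices indexed by $S$ with $i \in S$ correspond to such pairs $(i,T)$ with $T = S \setminus \{i\}$, and enlarging $T$ only increases the left-hand side, so it suffices to take $|T| = s-1$.) Rewriting the sum via signs,
\[
\sum_{j\in T}\Abs{\inprod{\vx_i}{\vx_j}} = \max_{\vq \in \{\pm 1\}^T} \inprod{\vx_i}{\mx_{:T}\vq},
\]
so we must control the random variables $\inprod{\vx_i}{\mx_{:T}\vq}$ uniformly over $i$, $T$, and sign vectors $\vq$. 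The diagonal term is split off and handled separately.

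\textbf{Step 1 (diagonal term).} By Bernstein's inequality for sums of sub-exponential variables, $\norm{\vx_i}_2^2 - 1$ is a sum of $n$ independent centered sub-exponential terms with parameter $O(1/n)$. Hence $\Abs{\norm{\vx_i}_2^2-1} \le \frac{\eps}{2}$ for all $i$ with probability $1-\frac{\delta}{3}$ once $n = \Omega(\eps^{-2}\log\frac{d}{\delta})$, which is far weaker than the claimed requirement.

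\textbf{Step 2 (off-diagonal term, fixed $(i,T,\vq)$).} Fix such a triple. First condition on $\mx_{:T}$ and note that on the event that $\mx$ is $(\half,s)$-RIP (Proposition~\ref{prop:RIP_sub_gaussian}, probability $1-\frac{\delta}{3}$ under our $n$), $\norm{\mx_{:T}\vq}_2^2 \le \frac32 s$. Since $\vx_i \perp \mx_{:T}$, Lemma~\ref{lem:cross_term_fix} (equivalently Hoeffding) shows that $\inprod{\vx_i}{\mx_{:T}\vq}$ is $\subg(0, \frac{C}{n}\cdot\frac32 s)$ conditionally. Therefore
\[
\Pr\Brack{\inprod{\vx_i}{\mx_{:T}\vq} > \tfrac{\eps}{2}} \le \exp\Par{-\frac{c\, n\eps^2}{s}} .
\]
The conditioning is harmless: either intersect with the RIP event, or simply note that $\norm{\mx_{:T}\vq}_2^2$ concentrates on its own.

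\textbf{Step 3 (union bound).} The number of triples is at most
\[
d \cdot \binom{d}{s} \cdot 2^s \le \exp\Par{O\Par{s\log\tfrac{d}{s} + \log d}} .
\]
The union bound then needs
\[
\frac{n\eps^2}{s} \gtrsim s\log\frac{d}{s} + \log\frac{d}{\delta},
\]
that is, $n = \Omega\Par{\frac{s^2}{\eps^2}\Par{\log\frac ds + \log\frac1\delta}}$. The stray $\frac{s}{\eps^2}\log d$ term is absorbed into this bound because $s\log\frac{d}{s} + \log d \lesssim s\log\frac{d}{s}$ for $s \ge 1$, after handling $s$ close to $d$ trivially.

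\textbf{Main obstacle.} The delicate point is Step 2: we need a tail of the form $\exp(-n\eps^2/s)$ rather than the bound implied by PI. Bounding each $\Abs{\inprod{\vx_i}{\vx_j}}$ individually and summing would lose an extra factor and reproduce the $\log d$ rather than $\log\frac{d}{s}$ dependence. Linearizing with the sign vector $\vq$, together with independence of $\vx_i$ from the other columns, is what makes the sum behave like a single sub-Gaussian of variance $O(s/n)$; the price is only the $2^s$ sign patterns, which the union bound absorbs.
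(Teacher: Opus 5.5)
Your proposal is correct and follows essentially the same argument as the paper: split off the diagonal, linearize the off-diagonal row sum with a sign vector, obtain a conditional sub-Gaussian tail $\exp(-\Omega(n\eps^2/s))$, and union bound over the $(S, i, \vq)$ certificates. The only difference is which factor you condition on. You condition on $\mx_{:T}$ and use $(\frac12, s)$-RIP to get $\norm{\mx_{:T}\vq}_2^2 \le \frac32 s$, whereas the paper conditions on $\vx_i$, needing only a column-norm bound, so that the $\vx_i^\top \vx_j$ are conditionally independent sub-Gaussians. Your justification by intersecting with the RIP event is, if anything, slightly more careful than the paper's conditioning on a global event.
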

\begin{proof}
Write $\mg := \mx^\top\mx-\id_d$. We control the diagonal and off-diagonal terms separately. 

For each $i$, $\mg_{ii}=\|\vx_i\|_2^2-1$ is a centered $\chi^2$ random variable, so Lemma 1, \cite{LaurentM00} shows
\[
\prob{|\mg_{ii}|\ge \frac \eps 2}\le 2\exp\Par{-\Omega(\eps n)}.
\]
A union bound over $i\in[d]$ yields $\max_i|\mg_{ii}|\le \frac \eps 2$ with probability
at least $1-\frac \delta 3$ provided $n = \Omega(\frac 1 \eps \log(\frac d \delta))$, which falls within our parameter regime.
Next, by Proposition~\ref{prop:RIP_sub_gaussian}, the event $\norm{\vx_i}_2 \le 2$ for all $i \in [d]$ occurs with probability $\ge 1 - \frac \delta 3$.  Condition on both events henceforth.

Finally, for the off-diagonal terms, we first fix a subset $S\subset[d]$ with $|S|=s$, $i\in S$, and $\vv\in\{\pm1\}^S$. Conditioning on $\vx_i$, 
the $\{\vx_i^\top\vx_j:j\in S\setminus\{i\}\}$ are i.i.d.\ $\subg(0, \frac{2C}n)$ random variables. Hence,
\[
\sum_{j\in S\setminus\{i\}} \vv_j\,\vx_i^\top\vx_j
\sim \subg\Par{0, \frac{2Cs}{n}},
\]
by Fact~\ref{fact:composition}, and therefore by Lemma~\ref{lem:hoeffding},
\begin{equation}\label{eq:to_union}
\prob{\Abs{\sum_{j\in S \setminus \{i\}} \vv_j\,\vx_i^\top\vx_j} \ge \frac \eps 2}
= 2\exp\Par{-\Omega\Par{\frac{\eps^2 n}{s}}}.
\end{equation}
Note that if the event in \eqref{eq:to_union} holds, then combined with the diagonal, $|\ve_i^\top \mg_{S \times S} \vv| \le \eps$.
Now to obtain $\ell_\infty$ RIP, we must show
\[
\max_{\substack{S \subseteq [d]: |S| \le s \\ i \in S \\ \vv \in \{\pm 1\}^S }} |\ve_i^\top \mg_{S\times S}\vv|\le \eps. \]
for all of $N$ certificates $(S, i, \vv)$.
The number of events is at most
\(
N \le s\,2^s\binom{d}{k}
\),
with $\log N = \Theta(s\log(\frac d s))$. Setting the right-hand side of \eqref{eq:to_union} to $\frac \delta {3N}$ and solving for $n$ gives the claim.
\end{proof}
If we instead combine Lemma~\ref{lem:pi_to_linf_RIP} with Proposition~\ref{prop:pi} to analyze sub-Gaussian ensembles, the resulting argument yields the sufficient condition $n \gtrsim \tfrac{s^2}{\eps^2}\log \frac d \delta$, which is slightly worse than Lemma~\ref{lem:subg_linf_rip}. This gap in the logarithmic dependence can be viewed as direct evidence that $\mathrm{PI}$ imposes a more restrictive structural requirement than $\ell_\infty$--RIP.

A natural question is whether the $\approx s^2$ sample complexity established in Lemma~\ref{lem:subg_linf_rip} for sub-Gaussian ensembles can be further improved by alternative matrix ensembles. Unfortunately, as we shortly demonstrate, this dependence is essentially tight: for any matrix with suitably-normalized columns, $(\eps,s)$-$\ell_\infty$-RIP cannot hold when $n=o(s^2)$. Consequently, sub-Gaussian ensembles achieve the optimal sample complexity for constant $\eps$, up to logarithmic factors. 

To prove this argument, we first recall a classic inequality known as the Welch bounds, which lower bounds averaged correlations among a set of unit vectors. 
\begin{lemma}[\cite{welch1974lower}]
    \label{lem:welch_bound}
    Let $\cbra{\vv_i}_{i \in [d]}$ be a set of unit vectors in $\R^n$. Then,
    \begin{equation*}
        \frac{1}{d^2}\sum_{(i, j) \in [d] \times [d]} \innp{\vv_i, \vv_j}^2 \ge \frac{1}{n}.
    \end{equation*}
\end{lemma}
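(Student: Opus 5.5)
The Welch bound is a statement about the Gram matrix $\mg \in \R^{d \times d}$ with entries $\mg_{ij} \defeq \innp{\vv_i, \vv_j}$. I will prove it by comparing its Frobenius norm to its trace, using that its rank is at most $n$. Write $\mathbf{V} \in \R^{n \times d}$ for the matrix whose columns are the $\vv_i$, so that $\mg = \mathbf{V}^\top \mathbf{V}$. Then $\mg$ is positive semidefinite with $\textup{rank}(\mg) \le n$. Since each $\vv_i$ is a unit vector, $\Tr(\mg) = \sum_{i \in [d]} \norm{\vv_i}_2^2 = d$. The left-hand side of the claim equals $\frac{1}{d^2}\normf{\mg}^2$, so it suffices to show $\normf{\mg}^2 \ge \frac{d^2}{n}$.

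Let $\lam_1, \ldots, \lam_m \ge 0$ be the nonzero eigenvalues of $\mg$, where $m \le n$. Then
\[
\normf{\mg}^2 = \sum_{\ell \in [m]} \lam_\ell^2 \ge \frac{1}{m}\Par{\sum_{\ell \in [m]} \lam_\ell}^2 = \frac{\Tr(\mg)^2}{m} \ge \frac{d^2}{n},
\]
where the first inequality is Cauchy--Schwarz applied to $(\lam_\ell)_{\ell \in [m]}$ and the all-ones vector in $\R^m$. Dividing by $d^2$ gives the claim.

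An equivalent route, which avoids eigenvalues, uses the $n \times n$ frame operator $\mathbf{F} \defeq \mathbf{V}\mathbf{V}^\top$. Cyclicity of the trace gives $\normf{\mg}^2 = \Tr(\mathbf{V}^\top\mathbf{V}\mathbf{V}^\top\mathbf{V}) = \normf{\mathbf{F}}^2$ and $\Tr(\mathbf{F}) = \Tr(\mg) = d$. Cauchy--Schwarz against $\id_n$ then gives $\normf{\mathbf{F}}^2 \ge \Tr(\mathbf{F})^2 / n = d^2/n$.

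There is no real obstacle here. The only point to get right is where the rank bound $m \le n$ enters: it is what converts $\Tr(\mg)^2/m$ into the dimension-dependent $d^2/n$.
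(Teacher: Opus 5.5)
Your proof is correct. The paper does not prove this lemma; it only cites Welch, so there is no in-paper argument to compare against. Your argument (Frobenius norm of the Gram matrix $\mg$ versus its trace $d$, with Cauchy--Schwarz over at most $n$ nonzero eigenvalues) is the standard proof of this first-moment case of the Welch bound. The frame-operator variant is an equivalent way to put the rank constraint into the dimension of $\id_n$.

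For completeness, note that $m \ge 1$ because $\Tr(\mg) = d > 0$, so the division by $m$ is legitimate. Welch's original paper proves a more general family, lower bounding $\frac{1}{d^2}\sum_{i,j}\innp{\vv_i, \vv_j}^{2t}$ by $1/\binom{n+t-1}{t}$. That family follows by applying your argument to the tensor powers $\vv_i^{\otimes t}$ in the symmetric tensor space. Only the $t = 1$ case you proved is used in Lemma~\ref{lem:LB_inf_RIP}.
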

Then we prove our tightness result as follows.
\begin{lemma}
    \label{lem:LB_inf_RIP}
    For any $\eps \in (0, \half)$, $s \ge 2$, and $d \ge \frac{s^3}{\eps^2}$, if $\mx$ satisfies $(\eps, s)$-$\ell_\infty$-RIP, then
    $n \ge \frac{s^2}{144\eps^2}$.
\end{lemma}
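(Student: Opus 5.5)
The plan is to combine the Welch bound (Lemma~\ref{lem:welch_bound}) with the per-row control that $\ell_\infty$-RIP gives on the Gram matrix. Write $\mg \defeq \mx^\top\mx - \id_d$. We use all $d$ columns, and let $\vv_i \defeq \vx_i/\norm{\vx_i}_2$ be the unit vectors to which Lemma~\ref{lem:welch_bound} will be applied.

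\textbf{Step 1 (diagonal).} Apply \eqref{eq:linf_rip_def} with $S = \{i\}$. This gives $|\mg_{ii}| \le \eps$, so $\norm{\vx_i}_2^2 \in [1-\eps, 1+\eps] \subseteq [\half, \frac32]$. Hence for $i \neq j$,
\[
\inprod{\vv_i}{\vv_j}^2 = \frac{\mg_{ij}^2}{\norm{\vx_i}_2^2\norm{\vx_j}_2^2} \le 4\mg_{ij}^2 .
\]

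\textbf{Step 2 (off-diagonal row control).} Fix a row $i$ and sort the values $\{|\mg_{ij}|\}_{j \ne i}$ in decreasing order. Choose $S$ to be $i$ together with the indices of the $s-1$ largest of these values. The $\ell_\infty \to \ell_\infty$ norm of $\mg_{S\times S}$ is its largest row $\ell_1$ norm, so $\ell_\infty$-RIP bounds the sum of these $s-1$ largest values by $\eps$. This has two consequences.
\begin{itemize}
\item Every value outside the top $s-1$ is at most the $(s-1)$-th largest, which is at most $\frac{\eps}{s-1} \le \frac{2\eps}{s}$.
\item The sum of squares of the top $s-1$ values is at most the square of their sum, i.e.\ at most $\eps^2$.
\end{itemize}
Together these give
\[
\sum_{j \ne i}\mg_{ij}^2 \le \eps^2 + d\cdot\frac{4\eps^2}{s^2}.
\]

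\textbf{Step 3 (Welch bound).} Plug the above into Lemma~\ref{lem:welch_bound}, separating the $d$ diagonal terms, each equal to $1$:
\[
\frac1n \le \frac{1}{d^2}\Big(d + 4d\Big(\eps^2 + \frac{4d\eps^2}{s^2}\Big)\Big) = \frac1d + \frac{4\eps^2}{d} + \frac{16\eps^2}{s^2}.
\]
Now use $d \ge \frac{s^3}{\eps^2}$ and $s \ge 2$. The first term is $\frac1d \le \frac{\eps^2}{s^3} \le \frac{\eps^2}{s^2}$. The second is $\frac{4\eps^2}{d} \le \frac{4\eps^4}{s^3} \le \frac{\eps^2}{s^2}$, using $\eps < \half$. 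Therefore
\[
\frac1n \le \frac{18\eps^2}{s^2}, \qquad\text{i.e.}\qquad n \ge \frac{s^2}{18\eps^2} \ge \frac{s^2}{144\eps^2}.
\]

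\textbf{Main obstacle.} The only nontrivial point is Step~2: $\ell_\infty$-RIP controls only $s$-element subsets, yet Welch averages over all $d$ columns. The sorting argument bridges this. It caps the tail entries of each row at $\frac{\eps}{s-1}$, and the condition $d \ge s^3/\eps^2$ is exactly what makes the $\frac1d$ diagonal contribution in Welch negligible next to $\frac{\eps^2}{s^2}$. Everything else is constant bookkeeping, and the stated constant $144$ leaves ample slack over the $18$ obtained here.
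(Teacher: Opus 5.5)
Your proof is correct, and it reaches the bound by a different route from the paper's.

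\textbf{The paper's route.} It argues by finding a witness. It applies Welch to all columns and averages to get a single row $i_0$ with $\sum_j \inprod{\vx_{i_0}}{\vx_j}^2 \ge \frac{d}{4n}$. It then bounds the heavy entries only by the crude estimate $|\inprod{\vx_{i_0}}{\vx_j}| \le \frac32$, which shows that at least $\frac{d}{18n}$ entries of that row exceed $\frac{1}{2\sqrt{2n}}$. Finally, in the case $n < \frac{s^2}{18\eps^2}$, it uses $d \ge \frac{s^3}{\eps^2}$ to extract $s$ such entries and exhibit an explicit $S$ whose row $\ell_1$ norm exceeds $\eps$.

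\textbf{Your route.} You run the argument uniformly over all rows. Applying $\ell_\infty$-RIP to the top-$(s-1)$ set of each row caps the head's sum of squares by $\eps^2$ and every tail entry by $\frac{\eps}{s-1}$. This gives a per-row $\ell_2$ bound that contradicts Welch directly, with no thresholding and no case split.

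\textbf{What each buys.} Your version has tighter bookkeeping: it gives $n \ge \frac{s^2}{18\eps^2}$, and Step 3 only needs $d \ge \frac{s^2}{\eps^2}$. Indeed, then $\frac1d \le \frac{\eps^2}{s^2}$ and $\frac{4\eps^2}{d} \le \frac{4\eps^4}{s^2} \le \frac{\eps^2}{s^2}$. So your closing remark that $d \ge \frac{s^3}{\eps^2}$ is ``exactly'' what is needed overstates it. The paper's counting, by contrast, genuinely uses the extra factor of $s$ to guarantee $|T| \ge s$. What the paper's route buys in return is constructiveness: it names an explicit support on which $\ell_\infty$-RIP fails.
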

\begin{proof}
Throughout the proof, we use Lemma~\ref{lem:linfty_l2} which shows $\mx$ also satisfies $(\eps, s)$-$\ell_2$-RIP, so $\norm{\vx_i}_2^2 \in [1 - \eps, 1 + \eps] \subset [\half, \frac 3 2]$ for all $i \in [d]$. Now applying Lemma~\ref{lem:welch_bound} to the $\{\vx_i \twonorm{\vx_i}^{-1}\}_{i \in [d]}$, 
\begin{equation*}
\sum_{(i,j)\in[d]\times[d]} \frac{|\langle \vx_i,\vx_j\rangle|^2}
{\|\vx_i\|_2^2\|\vx_j\|_2^2} \ge \frac{d^2}{n}.
\end{equation*}
Since all $\twonorm{\vx_i}^2 \ge \half$, it follows that $\sum_{(i, j) \in [d] \times [d]} \Abs{\innp{\vx_i, \vx_j}}^2 \ge \frac{d^2}{4n}$.

Next, define $S_i := \sum_{j\in[d]} |\langle \vx_i,\vx_j\rangle|^2$.
Averaging over $i$ yields $\frac{1}{d}\sum_{i\in[d]} S_i \ge \frac{d}{4n}$. Hence there exists an index $i_0$ such that $S_{i_0} \ge \frac{d}{4n}$. We define an index set $T$ through a thresholding rule: 
\[T := \cbra{j\in[d] : \Abs{\innp{\vx_{i_0},\vx_j}} \ge \frac{1}{2\sqrt{2n}}}.\]
Moreover, the Cauchy-Schwarz inequality gives a trivial bound for pairwise incoherence: $|\langle \vx_{i_0},\vx_j\rangle|
\le \|\vx_{i_0}\|_2\|\vx_j\|_2 \le \frac 3 2$, so 
we have
\begin{equation*}
S_{i_0} \le |T|\cdot \frac{9}{4} + (d-|T|)\cdot \frac{1}{8n}. 
\end{equation*}
Combining with $S_{i_0} \ge \frac{d}{4n}$ yields $|T| \ge \frac{d}{18n}$.
If $n \ge \frac{s^2}{18\eps^2}$, the desired claim already holds. Otherwise, $|T| \ge s$, since $d \ge \frac{s^3}{\eps^2}$. We now select a subset $S \subseteq T$ with $|S|=s$ and $i_0 \in S$ and obtain that 
\begin{equation*}
\norm{[\mx^\top\mx - \id]_{S\times S}}_{\infty\to\infty} \ge \sum_{j\in S,\,j\neq i_0} \Abs{\innp{ \vx_{i_0},\vx_j}}- |\twonorm{\vx_{i_0}}^2 - 1|.
\end{equation*}
Using the definition of $S \subseteq T$ and that
$|\|\vx_{i_0}\|_2^2 - 1| \le \eps$, we obtain
\begin{equation*}
    \eps \ge \norm{[\mx^\top\mx - \id]_{S\times S}}_{\infty\to\infty} \ge \frac{s-1}{2\sqrt{2n}} - \half \ge \frac{s}{6\sqrt{n}} - \eps.
\end{equation*}
Now, rearranging $2\eps \ge \frac{s}{6\sqrt n}$ gives the claim.
\end{proof}

\subsection{IHT under \texorpdfstring{$\ell_\infty$}{L-inf}-RIP}\label{ssec:iht_linfty}
We next show that under Definition~\ref{def:linf_RIP}, IHT (Algorithm~\ref{alg:IHT}) solves Problem~\ref{prob:linfty_sparse} even in an adaptive setting. The proof roadmap is similar to the classic $\ell_2$ case, cf.\ \cite{price2021}, where the RIP condition is used to argue contraction on a potential based on the $\ell_2$ norm of the residual, and then the sparsity of $\vths$ bounds the lossiness of the top-$k$ selection step with respect to the potential. 

We start with Lemma~\ref{lem:half_step_contract}, which shows that before the top-$k$ truncation step, the $\ell_\infty$ error of our estimation contracts by a constant factor in each iteration. 

\begin{lemma}
    \label{lem:half_step_contract}
    Following the notation of Algorithm~\ref{alg:IHT}, for all $t \in [0, T-1]$, define
    \begin{equation*}
        \vth^{(t+0.5)} = \vth^{(t)} + \mx^\top (\vy - \mx \vth^{(t)}).
    \end{equation*}
    Then in the setting of Problem~\ref{prob:linfty_sparse}, if $\mx$ is $(\eps,2k)$-$\ell_\infty$-RIP, we have 
    \begin{equation*}
        \infnorm{\vth^{(t+0.5)} - \vths} \leq \eps\infnorm{\vth^{(t)} - \vths} + \infnorm{\mx^\top \vxi}.
    \end{equation*}
\end{lemma}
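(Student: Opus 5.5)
Substituting $\vy = \mx\vths + \vxi$ into the definition of $\vth^{(t+0.5)}$ and subtracting $\vths$ gives the error identity
\[
\vth^{(t+0.5)} - \vths = \Par{\id_d - \mx^\top\mx}\Par{\vth^{(t)} - \vths} + \mx^\top\vxi .
\]
Write $\vv \defeq \vth^{(t)} - \vths$. The triangle inequality immediately handles the noise term, bounding it by $\infnorm{\mx^\top\vxi}$. So it remains to show $\infnorm{[\mx^\top\mx - \id_d]\vv} \le \eps\infnorm{\vv}$.

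The key observation concerns the support of $\vv$. By construction, $\vth^{(t)}$ is either $\vzero_d$ (for $t=0$) or the output of $H_k$, so $\nnz(\vth^{(t)}) \le k$. Also $\nnz(\vths)\le k$. Hence $S_0 \defeq \supp(\vv)$ satisfies $|S_0| \le 2k$, and $\mg \defeq \mx^\top\mx - \id_d$ acts on $\vv$ only through the columns indexed by $S_0$.

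The one subtlety is that $\mg\vv$ is supported on all of $[d]$, while Definition~\ref{def:linf_RIP} only controls the principal submatrices $\mg_{S\times S}$. To handle this, I would bound the coordinates of $\mg\vv$ one at a time. Fix any $i\in[d]$ and set $S \defeq S_0 \cup \{i\}$. We need $|S| \le 2k$; this holds directly if $i \in S_0$. Otherwise I would either use the slack that $\vth^{(t)}$ and $\vths$ together can be taken to have support at most $2k-1$, or simply note that the argument needs $(\eps, 2k+1)$-$\ell_\infty$-RIP, which is harmless under Lemma~\ref{lem:subg_linf_rip}. More simply, replace $S_0$ by any superset of size $2k$ containing $i$: if $|S_0|=2k$ and $i\notin S_0$, one can still argue directly, since row $i$ of $\mg$ restricted to $S_0 \cup \{i\}$ has $\ell_1$ norm bounded by the $\ell_\infty$-RIP bound for the size-$(2k+1)$ set. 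I expect this off-support coordinate bookkeeping to be the only real obstacle, and would resolve it by stating the lemma for $(\eps,2k+1)$ if needed.

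Given such an $S$, we have $(\mg\vv)_i = \ve_i^\top \mg_{S\times S}\vv_S$. Since $\norm{\cdot}_{\infty\to\infty}$ is the maximum row $\ell_1$ norm,
\[
|(\mg\vv)_i| \le \norm{\mg_{S\times S}}_{\infty\to\infty}\infnorm{\vv_S} \le \eps\infnorm{\vv}.
\]
Taking the maximum over $i$ and combining with the noise term yields the claim.
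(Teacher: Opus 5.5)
Your argument is the same as the paper's. Both use the error identity $\vth^{(t+0.5)}-\vths=(\id_d-\mx^\top\mx)(\vth^{(t)}-\vths)+\mx^\top\vxi$ and then bound each coordinate by a row $\ell_1$ norm of a restricted Gram matrix. You are also right to flag the off-support coordinates, and on this point you are more careful than the paper. Its proof bounds $\infnorm{(\id-\mx^\top\mx)(\vth^{(t)}-\vths)}$ by $\infnorm{[\id-\mx^\top\mx]_{S\times S}(\vth^{(t)}-\vths)}$ with $S=\supp(\vths)\cup\supp(\vth^{(t)})$, which only controls coordinates in $S$. For $i\notin S$ the entry $-\vx_i^\top\mx_{:S}(\vth^{(t)}-\vths)_S$ is generally nonzero. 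It matters downstream, because $H_k$ in Corollary~\ref{cor:one_step_contract} may select such a coordinate.

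However, only one of your three suggested repairs works. There is no slack down to $2k-1$: $\vth^{(t)}$ and $\vths$ can have disjoint supports of size $k$ each. When $|S_0|=2k$ and $i\notin S_0$, no superset of $S_0$ of size $2k$ contains $i$, and appealing to a bound for the size-$(2k+1)$ set is circular under an $(\eps,2k)$ hypothesis.

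In fact, no argument using only $2k$-sparsity of $\vv$ can get the constant $\eps$ from $(\eps,2k)$-$\ell_\infty$-RIP. Take $\mg$ to be a star at $i$: $\mg_{ij}=\mg_{ji}=\frac{\eps}{2k-1}\sign(\vv_j)$ for $j\in S_0$, and all other entries $0$. Then every $\mg_{S\times S}$ with $|S|\le 2k$ has maximum row $\ell_1$ norm at most $\eps$, and $\id+\mg$ is positive definite for $k\ge 2$. Yet $(\mg\vv)_i=\frac{2k}{2k-1}\eps\infnorm{\vv}$ when $|\vv_j|=\infnorm{\vv}$ on $S_0$.

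So commit to one of two correct fixes:
\begin{enumerate}
\item Assume $(\eps,2k+1)$-$\ell_\infty$-RIP. This costs nothing in Theorem~\ref{thm:linf_iht} via Lemma~\ref{lem:subg_linf_rip}.
\item Keep $(\eps,2k)$ and, for $i\notin S_0$, pick $j_0\in S_0$. Applying the hypothesis to $(S_0\setminus\{j_0\})\cup\{i\}$ and to $\{i,j_0\}$ gives $\sum_{j\in S_0}|\mg_{ij}|\le 2\eps$. This yields the lemma with $2\eps$ in place of $\eps$, which is still enough for Theorem~\ref{thm:linf_iht} if you take $\eps=\frac18$ there.
\end{enumerate}
With either choice stated explicitly, your proof is complete and fixes the gap in the paper's own argument.
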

\begin{proof}
    Rearranging the definition of $\vth^{(t+0.5)}$ yields 
    \begin{align*}
        \vth^{(t+0.5)} - \vths = (\id - \mx^\top\mx) (\vth^{(t)} - \vths) + \mx^\top \vxi.
    \end{align*}
    Thus, letting $S \defeq \supp(\vths) \cup \supp(\vth^{(t)})$ with $|S| \le 2k$,
    \begin{align*}
    \infnorm{\vth^{(t+0.5)} - \vths} &\le \infnorm{\Brack{\id - \mx^\top\mx}_{S \times S} (\vth^{(t)} - \vths)} + \infnorm{\mx^\top \vxi}.\\
    &\leq \norm{\Brack{\id - \mx^\top \mx}_{S \times S}}_{\infty \to \infty} \infnorm{\vth^{(t)} - \vths} + \infnorm{\mx^\top \vxi}\\
    &\leq \eps\infnorm{\vth^{(t)} - \vths} + \infnorm{\mx^\top \vxi}.
    \end{align*}
\end{proof}
Combining Lemma~\ref{lem:half_step_contract} with our earlier observation in Lemma~\ref{lem:proper}, we conclude that the $\ell_\infty$-norm error of IHT iterates admits an exponential decay toward $\infnorm{\mx^\top \vxi}$, as $\vth^{(t + 1)} = T_k(\vth^{(t + 0.5)})$.

\begin{corollary}
\label{cor:one_step_contract}
In the setting of Lemma~\ref{lem:half_step_contract}, 
\begin{equation*}
\infnorm{\vth^{(t+1)} - \vths} \leq 2\eps\infnorm{\vth^{(t)} - \vths} + 2\infnorm{\mx^\top \vxi}.
\end{equation*}
\end{corollary}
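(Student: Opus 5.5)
The plan is to combine the half-step contraction of Lemma~\ref{lem:half_step_contract} with the projection property of hard thresholding from Lemma~\ref{lem:proper}, taking $p = \infty$. The update in Algorithm~\ref{alg:IHT} is $\vth^{(t+1)} = H_k(\vth^{(t+0.5)})$, where $\vth^{(t+0.5)}$ is the pre-truncation iterate defined in Lemma~\ref{lem:half_step_contract}. Since $\nnz(\vths) \le k$, Lemma~\ref{lem:proper} applies directly with $\vth \gets \vth^{(t+0.5)}$ and gives
\[\infnorm{\vth^{(t+1)} - \vths} = \infnorm{H_k(\vth^{(t+0.5)}) - \vths} \le 2\infnorm{\vth^{(t+0.5)} - \vths}.\]

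Next, the hypotheses of Lemma~\ref{lem:half_step_contract} hold in the present setting: $\mx$ is $(\eps, 2k)$-$\ell_\infty$-RIP, and $\vth^{(t)}$ is at most $k$-sparse. Sparsity holds for $t = 0$ because $\vth^{(0)} = \vzero_d$, and for $t \ge 1$ because each iterate is the output of $H_k$. The lemma therefore gives
\[\infnorm{\vth^{(t+0.5)} - \vths} \le \eps\infnorm{\vth^{(t)} - \vths} + \infnorm{\mx^\top \vxi}.\]
Substituting this into the previous display gives exactly the claimed bound $2\eps\infnorm{\vth^{(t)} - \vths} + 2\infnorm{\mx^\top\vxi}$.

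There is essentially no obstacle; the argument is a direct chaining of the two lemmas. The only point needing care is the sparsity of $\vth^{(t)}$ required by Lemma~\ref{lem:half_step_contract}. That lemma restricts to $S = \supp(\vths)\cup\supp(\vth^{(t)})$ with $|S| \le 2k$, which is why $(\eps,2k)$-$\ell_\infty$-RIP suffices. This sparsity is guaranteed by the thresholding step and the zero initialization, as noted above.
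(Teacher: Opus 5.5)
Your proposal is correct and is the paper's argument: the paper also obtains the corollary by applying Lemma~\ref{lem:proper} with $p = \infty$ to $\vth^{(t+1)} = H_k(\vth^{(t+0.5)})$, and then substituting the half-step bound of Lemma~\ref{lem:half_step_contract}. Your explicit check that each iterate $\vth^{(t)}$ is $k$-sparse (from the zero initialization and the thresholding step) is a detail the paper leaves implicit.
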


With this one-step contraction, we finally give our $\ell_\infty$ sparse recovery guarantee for IHT. 

\begin{theorem}[Adaptive $\ell_\infty$ sparse recovery]\label{thm:linf_iht}
Let $\delta \in (0, \half)$, $n = \Omega(k^2 \log \frac d \delta)$ for an appropriate constant, and $k \in [d]$. Under Models~\ref{model:adv_noise} and~\ref{model:subg}, if Algorithm~\ref{alg:IHT} is called with parameters $R \ge \norms{\vths}_\infty$ and $r > 0$, then with probability $\ge 1-\delta$, its output satisfies
\[\infnorm{\vth - \vths} \le r + 2\norm{\mx^\top \vxi}_\infty.\]
Moreover, the algorithm runs in time $O(nd\log \frac R r)$.
\end{theorem}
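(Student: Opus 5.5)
The proof combines the one-step contraction of Corollary~\ref{cor:one_step_contract} with a high-probability $\ell_\infty$-RIP guarantee from Lemma~\ref{lem:subg_linf_rip}. The adaptive model imposes no independence between $\mx$ and $(\vths,\vxi)$, so the only probabilistic event used is that $\mx$ is $(\eps, 2k)$-$\ell_\infty$-RIP for a small absolute constant $\eps$. This is a property of $\mx$ alone, and once it holds the argument is deterministic for every $(\vths,\vxi)$. Lemma~\ref{lem:subg_linf_rip} with $s = 2k$ gives it with probability $\ge 1-\delta$ once $n = \Omega(\frac{k^2}{\eps^2}\log\frac{d}{\delta})$. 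For constant $\eps$ this is the stated $n = \Omega(k^2\log\frac d\delta)$.

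\textbf{Deterministic recursion.} Write $e_t \defeq \infnorm{\vth^{(t)} - \vths}$ and $\sigma \defeq \infnorm{\mx^\top\vxi}$.
\begin{itemize}
\item The initialization $\vth^{(0)} = \vzero_d$ gives $e_0 = \norms{\vths}_\infty \le R$. If $r \ge R$, the algorithm returns $\vzero_d$ and the bound is immediate.
\item Every iterate is $k$-sparse, so Lemma~\ref{lem:half_step_contract} applies at each step with support $S = \supp(\vths)\cup\supp(\vth^{(t)})$ of size $\le 2k$.
\item Corollary~\ref{cor:one_step_contract} then gives $e_{t+1} \le 2\eps e_t + 2\sigma$.
\item Unrolling for $T = \lceil \log_2 \frac R r\rceil$ steps with $2\eps \le \half$ gives
\[e_T \le (2\eps)^T R + 2\sigma\sum_{i<T}(2\eps)^i \le r + \frac{2\sigma}{1-2\eps}.\]
\end{itemize}

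\textbf{Main obstacle: the constant $2$.} The unrolled bound has $2/(1-2\eps)$ in front of $\sigma$, not $2$. To reach exactly $r + 2\sigma$, I would re-examine the truncation step rather than use Lemma~\ref{lem:proper} as a black box. Let $\vth^{(t+0.5)} - \vths = \Delta + \mx^\top\vxi$, where $\infnorm{\Delta} \le \eps e_t$.
\begin{itemize}
\item A coordinate kept by $H_k$ incurs error at most $\eps e_t + \sigma$.
\item A coordinate $i \in \supp(\vths)$ dropped by $H_k$ is dominated by some kept $j \notin \supp(\vths)$. Hence $|\vths_i| \le |\vth^{(t+0.5)}_i| + \eps e_t + \sigma \le |\vth^{(t+0.5)}_j| + \eps e_t+\sigma$, which is at most $2\eps e_t + 2\sigma$.
\end{itemize}
So the noise enters with coefficient exactly $2$ at each step, and only the $\eps e_t$ terms compound. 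I would therefore track two sequences: a noise-free part obeying $a_{t+1}\le 2\eps a_t$ from $a_0 = R$, and a noise part. The aim is to show the noise contribution at the final step is $2\sigma$ plus a term of order $\eps\sigma$.

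I would absorb the leftover $O(\eps\sigma)$ by two moves. First, take $\eps$ a small constant so that $(2\eps)^T R \le r/2$, which the geometric decay already gives with $T = \lceil\log_2\frac Rr\rceil$ once $2\eps \le \frac14$. Second, bound the residual cross term by the remaining $r/2$ slack. If that comparison fails, meaning the slack $r/2$ is smaller than the $O(\eps\sigma)$ cross term, I would fall back to the bound $r + \frac{2}{1-2\eps}\sigma$. That bound has the stated form up to a $1+O(\eps)$ factor on the constant $2$, and the factor can be made as close to $1$ as desired through the constant in $n$.

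\textbf{Runtime.} Each of the $T = O(\log\frac Rr)$ iterations costs two matrix–vector products with $\mx$, taking $O(nd)$ time, plus a top-$k$ selection in $O(d)$ time. The total is $O(nd\log\frac Rr)$.
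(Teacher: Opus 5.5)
Your argument is the paper's: obtain $(\eps,2k)$-$\ell_\infty$-RIP from Lemma~\ref{lem:subg_linf_rip} (the paper fixes $\eps=\frac14$), then unroll Corollary~\ref{cor:one_step_contract} for $T=\lceil\log_2\frac Rr\rceil$ steps. The runtime count is as you give it. Your unrolled bound $e_T\le r+\frac{2}{1-2\eps}\sigma$ is exactly what this argument proves, and you were right to distrust the constant $2$. The paper's own display evaluates to $2^{-T}R+2\sigma\sum_{t\ge0}2^{-t}=2^{-T}R+4\sigma$, so its final ``$+2\norm{\mx^\top\vxi}_\infty$'' is an arithmetic slip.

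What does not work is your plan to recover exactly $2$, and you should drop it.

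\emph{Your refinement reproduces the same recursion.} The refined truncation analysis gives $e_{t+1}\le\max(\eps e_t+\sigma,\,2\eps e_t+2\sigma)=2\eps e_t+2\sigma$, which is Corollary~\ref{cor:one_step_contract} verbatim. Since $e_t$ already carries a $2\sigma$ term, the $\eps e_t$ term does compound the noise. Any two-sequence bookkeeping therefore lands on the fixed point $\frac{2\sigma}{1-2\eps}$.

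\emph{No $\ell_\infty$-RIP argument can do better.} Take $d=2$, $k=1$, and unit columns with $\inprod{\vx_1}{\vx_2}=-\eps$; this matrix is $(\eps,2)$-$\ell_\infty$-RIP. Let $\vths=a\ve_1$ and $\mx^\top\vxi=-\sigma\vone_2$, with $\sigma<a<\frac{2\sigma}{1-\eps}$. Then $\vth^{(0.5)}=(a-\sigma,\,-\eps a-\sigma)$, so $H_1$ keeps coordinate $2$ and $\vth^{(1)}=(0,\,-\eps a-\sigma)$. The next half-step is $(a(1-\eps^2)-(1+\eps)\sigma,\,-\eps a-\sigma)$, so $\vth^{(1)}$ is a fixed point. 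For every $T\ge1$ the output error is $a$, which exceeds $r+2\sigma$ whenever $r<a-2\sigma$.

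So state the conclusion as $r+\frac{2}{1-2\eps}\infnorm{\mx^\top\vxi}$. This is $r+4\infnorm{\mx^\top\vxi}$ at $\eps=\frac14$, or $2+O(\eps)$ in place of $2$ if you take a larger constant in $n$. A universal constant is all Problem~\ref{prob:linfty_sparse} requires.

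One minor point is inherited from Lemma~\ref{lem:half_step_contract}. After the gradient step every coordinate of $\vth^{(t+0.5)}-\vths$ can be nonzero. Bounding a row $i\notin S$, as you do at the kept decoy $j$, uses the $\ell_\infty$-RIP bound on $S\cup\{i\}$, which has size $2k+1$. Apply Lemma~\ref{lem:subg_linf_rip} with $s=2k+1$ instead of $2k$; this costs nothing.
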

\begin{proof}
    By Lemma~\ref{lem:subg_linf_rip}, with probability $1-\delta$, $\mx$ is $(\frac 1 4, 2k)$-$\ell_\infty$-RIP.
    Now, by telescoping Corollary~\ref{cor:one_step_contract}, 
    \begin{align*}
        \infnorm{\vth - \vths} &\leq \paren{\frac{2}{4}}^T \infnorm{\vths} + 2\infnorm{\mx^\top \vxi} \cdot \sum_{t=0}^\infty \paren{\frac{2}{4}}^t \leq \paren{\half}^T R + 2\norm{\mx^\top \vxi}_\infty.
    \end{align*}
    Taking $T \ge  \log_2 \frac{R}{r}$, we obtain the desired claim, and the runtime is immediate.
\end{proof}

As noted, Theorem~\ref{thm:linf_iht} extends beyond Model~\ref{model:subg}, as it only uses the sufficient condition of $\ell_\infty$-RIP.
\subsection{Lower bound}\label{ssec:linfty_lower}

The previous Sections~\ref{ssec:linfty_rip} and~\ref{ssec:iht_linfty} show that if we go through the sufficient condition of $\ell_\infty$-RIP, then we must take $n = \Omega(k^2)$ for an algorithm to solve Problem~\ref{prob:linfty_sparse}. However, this does not formally preclude a lower sample complexity of $n = o(k^2)$ that solves the problem under Model~\ref{model:subg} directly. The main goal of this section is to rule out such a possibility.

The key intuition behind the construction is to identify a $k$-sparse $\vv \in \R^d$ satisfying $\infnorm{\vv} \gg \sigma$ while still having $\infnorm{\mX^\top \mX \vv} = \sigma$. 
Setting $\vxi = \mX \vv$ then produces an adversarial noise vector that significantly perturbs the underlying signal yet contributes negligibly to $\mX^\top \vxi$, effectively masking the true support. 
To formalize this idea, we first establish that under suitable conditions, we can control the $\ell_\infty$ operator norm of the \emph{inverse} of a Gram matrix from Model~\ref{model:subg}.

\begin{lemma}
    \label{lem:inf_inf_lower_bound}
Let $\delta \in (0, \half)$, $\Omega(k\log \frac d \delta) = n = o(k^2)$, and let $S \subseteq [d]$ be any fixed subset such that $|S| = k = \Omega(\log \frac 1 \delta)$. Under Model~\ref{model:subg}, with probability $\ge 1 - \delta$,
    \begin{equation*}
        \norm{\Brack{\mx^\top \mx}_{S\times S}^{-1}}_{\infty \to \infty} = \Omega\paren{\frac{k}{\sqrt{n}}}.
    \end{equation*}
\end{lemma}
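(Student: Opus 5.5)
We plan to lower bound the $\ell_\infty \to \ell_\infty$ norm by the $\ell_1$ norm of a single row of $[\mx^\top\mx]^{-1}_{S\times S}$, which we compute through the Schur complement formula from the proof of Lemma~\ref{lem:noise_linf_bound_fix}. Fix any $i \in S$ and use the notation there: $a = \norm{\vx_i}_2^2$, $\vb = \mx_{S-i:}^\top \vx_i$, $\mc = \mx_{S-i:}^\top \mx_{:S-i}$. The $i^{\text{th}}$ row of the inverse is $\frac{1}{\vx_i^\top \matp_i \vx_i}\,[1, -(\mc^{-1}\vb)^\top]$. By $(\half,k)$-RIP (Proposition~\ref{prop:RIP_sub_gaussian}) we have $\vx_i^\top\matp_i\vx_i \le \norm{\vx_i}_2^2 \le \frac 3 2$. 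It therefore suffices to show $\norm{\mc^{-1}\vb}_1 = \Omega(\frac{k}{\sqrt n})$ with probability $\ge 1-\delta$; one index $i$ is enough.

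\textbf{Step 1: $\mc^{-1}\vb \approx \vb$ in $\ell_1$.} By Proposition~\ref{prop:RIP_sub_gaussian} with a small constant $\eps_0$ (allowed since $n = \Omega(k\log\frac d\delta)$ with a large constant), $\mx$ is $(\eps_0,k)$-RIP. Hence $\normop{\mc^{-1}-\id} \le 2\eps_0$ and $\norm{\mc^{-1}\vb - \vb}_2 \le 2\eps_0\norm{\vb}_2$. Since $\mc^{-1}\vb - \vb$ has $k-1$ coordinates, Cauchy--Schwarz gives
\[\norm{\mc^{-1}\vb}_1 \ge \norm{\vb}_1 - 2\eps_0\sqrt{k}\,\norm{\vb}_2.\]
For $\norm{\vb}_2 = \norm{\mx_{S-i:}^\top \vx_i}_2$, RIP on $S$ bounds it by $O(\sqrt{k})\cdot\max_j|\vx_j^\top\vx_i|$, which is too lossy. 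Instead, condition on $\vx_i$: the entries $\vx_j^\top\vx_i$ are i.i.d.\ $\subg(0,\frac{C}{n}\norm{\vx_i}_2^2)$. A sub-exponential (Bernstein) bound on $\sum_j(\vx_j^\top\vx_i)^2$ then gives $\norm{\vb}_2^2 \le C'\frac{k}{n}$ with probability $1-\exp(-\Omega(k))$. So the error term is at most $2\eps_0\sqrt{C'}\,\frac{k}{\sqrt n}$.

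\textbf{Step 2: $\norm{\vb}_1 \ge c\frac{k}{\sqrt n}$.} Still conditioning on $\vx_i$ with $\norm{\vx_i}_2^2 \ge \half$, each $\vx_j^\top\vx_i$ for $j\in S\setminus\{i\}$ has variance $\frac{\norm{\vx_i}_2^2}{n}$ and is sub-Gaussian with a comparable parameter. By Lemma~\ref{lem:anticonc}, $|\vx_j^\top\vx_i| \sim \subg(\mu, O(\frac 1n))$ with $\mu \ge \frac{c_1}{\sqrt{2n}}$. These are independent across $j$, so by Fact~\ref{fact:composition} and Lemma~\ref{lem:hoeffding},
\[\norm{\vb}_1 \ge (k-1)\mu - O\Big(\sqrt{\tfrac{k\log(1/\delta)}{n}}\Big) \ge \tfrac{c_1 k}{4\sqrt n},\]
with probability $\ge 1-\frac\delta 3$. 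The last inequality uses the hypothesis $k = \Omega(\log\frac1\delta)$.

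\textbf{Step 3: conclude.} Choosing $\eps_0$ small enough that $2\eps_0\sqrt{C'} \le \frac{c_1}{8}$ and union bounding the three events gives
\[\norm{[\mx^\top\mx]^{-1}_{S\times S}}_{\infty\to\infty} \ge \tfrac23\norm{\mc^{-1}\vb}_1 = \Omega\Big(\frac{k}{\sqrt n}\Big).\]
The condition $n = o(k^2)$ only ensures this exceeds the trivial $O(1)$ bound, so the statement is meaningful.

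The main obstacle is Step 1. A cruder route via $\norm{\vw}_1 \ge \norm{\vw}_2^2/\norm{\vw}_\infty$ together with Lemma~\ref{lem:noise_linf_bound_fix} would lose a $\sqrt{\log d}$ factor. Perturbing around $\vb$ in $\ell_2$ avoids this, but only if the RIP constant can be taken below a small absolute constant and $\norm{\vb}_2 = O(\sqrt{k/n})$ is proven sharply rather than through the $\sqrt k \norm{\cdot}_\infty$ bound.
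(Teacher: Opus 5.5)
Your proof is correct. Its skeleton matches the paper's: the dominant part of a row of $[\mx^\top\mx]^{-1}_{S\times S}$ is the off-diagonal part of the corresponding Gram row. Its $\ell_1$ norm is $\Omega(k/\sqrt n)$ by Lemma~\ref{lem:anticonc} plus concentration over the $k-1$ conditionally independent entries. (The paper counts large entries via Cantelli and Chernoff, while you apply Hoeffding to $\sum_j|\vx_j^\top\vx_i|$ directly; this difference is cosmetic.)

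Where you genuinely differ is in controlling the higher-order remainder. The paper expands the whole inverse as a Neumann series in $\me_S$ and bounds the tail $\mr_S$ only in operator norm, $\normop{\mr_S}\le 2\normop{\me_S}^2\le\half$. It then cites Lemma~\ref{lem:linfty_l2} to conclude $\norm{\mr_S}_{\infty\to\infty}\le\half$. But that lemma states $\norm{\cdot}_{2\to 2}\le\norm{\cdot}_{\infty\to\infty}$, which is the wrong direction for this step. The valid conversion loses a factor $\sqrt k$:
\[\norm{\mr_S}_{\infty\to\infty}\le\sqrt{k}\normop{\mr_S}\le 2\sqrt k\normop{\me_S}^2.\]
So one must use the sharp estimate $\normop{\me_S}=O(\sqrt{k/n})$ to get a remainder of order $k^{3/2}/n=o(k/\sqrt n)$.

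Your Schur-complement argument isolates a single row exactly and performs precisely this $\ell_2\to\ell_1$ conversion. The $\sqrt k$ loss is paid for by the small RIP constant $\eps_0$ together with the sharp bound $\twonorm{\vb}=O(\sqrt{k/n})$. In effect, your argument supplies a rigorous version of the paper's remainder step.

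A side benefit is that in your formula the row's $\ell_1$ norm equals $(1+\norm{\mc^{-1}\vb}_1)/\vx_i^\top\matp_i\vx_i$, with no subtracted $O(1)$ terms. So you never need $n=o(k^2)$, whereas the paper uses that hypothesis to absorb its additive $-\half$ losses. The paper's global expansion of the full inverse is more compact to state. However, a single row suffices for a lower bound on $\norm{\cdot}_{\infty\to\infty}$, so your localized computation loses nothing.
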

\begin{proof}
By Lemma~\ref{prop:RIP_sub_gaussian}, with probability $\ge 1 - \frac \delta 3$, $\mx$ is $(\half, 1)$-RIP, so $\twonorm{\vx_i}^2 \in [\half, \frac 3 2]$ for all $i \in[d]$. 

Next, define $\me \defeq \id_d - \mx^\top \mx$.
Standard inequalities (Exercise 4.7.3, \cite{vershynin2018high}) yield that for large enough $n$ as specified, denoting $\me_S \defeq \me_{S \times S}$ for short,
$ \normop{\me_{S}} \le \frac{1}{2}$,
with probability $\ge 1 - \frac \delta 3$. 

Finally, conditioned on the realization of $\vx_i$ with $\norm{\vx_i}_2^2 \in [\half, \frac 3 2]$ for a fixed $i \in S$, Fact~\ref{fact:composition} shows that for any $j \neq i$, $\me_{ij} = \vx_i^\top \vx_j \sim \subg(0, \frac {3C} {2n})$, and $\Var[\me_{ij}] = \frac 1 n \norm{\vx_i}_2^2 \ge \frac 1 {2n}$. By Lemma~\ref{lem:anticonc},
\[\E\Brack{\Abs{\me_{ij}}} \ge \frac{c_1}{\sqrt n},\quad \Var\Brack{|\me_{ij}|} \le \frac{c_2}{n},\]
for some constants $c_1$, $c_2$, where we bounded the variance using Item~\ref{item:moments} in Lemma~\ref{lem:subg_property}. Now Cantelli's inequality shows that there exists a constant $c$ such that
\begin{equation}\label{eq:anticonc_conclusion}\Pr\Brack{|\me_{ij}| \ge \frac{4c}{\sqrt n}} \ge c.\end{equation}
For the stated range of $k$, a Chernoff bound gives that at least a $\frac {2c} 3$ fraction of the entries $j$ in $S \setminus \{i\}$ satisfy the above event, with probability $\ge 1 - \frac \delta {3}$. Thus, under the assumed parameter regimes,
\[\sum_{j \in S \setminus \{i\}} |\me_{ij}| \ge \frac{8c(k-1)}{3\sqrt n} \ge \frac{ck}{\sqrt n} \implies \sum_{j \in S} |[\id_S - \me_S]_{ij}| \ge \frac{2ck}{\sqrt n} - \half \ge \frac{ck}{\sqrt n}.\]
In the remainder of the proof, condition on the above three events holding, which gives the failure probability. Next, consider the power series: 
\begin{equation*}
    [\mx^\top \mx]_{S\times S}^{-1} = (\id_S + \me_S)^{-1} = \id_S - \me_S + \sum_{t = 2}^\infty (-\me_S)^t.
\end{equation*}
Letting $\mr_S \defeq \sum_{t = 2}^\infty (-\me_S)^t$, we have 
\begin{equation*}
    \normop{\mr_S} \leq \sum_{t = 2}^\infty \normop{\me_S}^t = \frac{\normop{\me_S}^2}{1 - \normop{\me_S}} \leq 2\normop{\me_S}^2 \le \half.
\end{equation*}
The conclusion then follows from combining the above three displays, and applying Lemma~\ref{lem:linfty_l2}:
\begin{align*}
\norm{\Brack{\mx^\top \mx}^{-1}_{S \times S}}_{\infty \to \infty} \ge \norm{\id_S - \me_S}_{\infty \to \infty} - \norm{\mr_S}_{\infty \to \infty} \ge \frac{ck}{\sqrt n} - \half = \Omega\Par{\frac{k}{\sqrt n}}.
\end{align*}
\end{proof}

\begin{remark}[Comparison with Theorem 7.21 of \cite{wainwright2019high}]
\label{remark:compare_error}
We make a brief comment on the error rate achieved by the analysis of the LASSO in \cite{wainwright2019high}, in light of Lemma~\ref{lem:inf_inf_lower_bound}. In particular,
Theorem 7.21 of \cite{wainwright2019high} implies that, when
\(\vxi\sim\calN(0,\sig^2\id)\) and \(\lambda\gtrsim \sig\), the LASSO estimator
\(\vhth\) satisfies
\[
    \infnorm{\vhth-\vths}
    \le
    \infnorm{
        \Brack{\mx^\top\mx}_{S\times S}^{-1}
        \mX_{S:}^\top\vxi
    }
    +
    \lambda
    \norm{
        \Brack{\mx^\top\mx}_{S\times S}^{-1}
    }_{\infty\to\infty},
\]
where \(S=\supp(\vths)\). In Appendix~\ref{app:discuss_gaussian_noise}, we show
that the first term is comparable to \(\infnorm{\mx^\top\vxi}\) under Model~\ref{model:for_each}. Thus the first term matches the information-theoretic optimal rate derived in Appendix~\ref{app:discuss_gaussian_noise}.

However, by Lemma~\ref{lem:inf_inf_lower_bound}, the second
term can be as large as \(\Omega(\sig\frac{k}{\sqrt n}), \) regardless of the noise model.
This is strictly worse than the information-theoretic error rate \(\infnorm{\mx^\top\vxi}\) whenever
\(n=o(k^2)\). In particular, in the regime \(n=O(k\log d)\), this term is
\(\sig\sqrt{k}\) up to log factors, which is trivially satisfied by any $\ell_2$-accurate estimator (and is a $\sqrt{k}$ factor suboptimal for $\ell_\infty$ sparse recovery).
\end{remark}
\begin{corollary}
    \label{cor:sep_lemma}
In the setting of Lemma~\ref{lem:inf_inf_lower_bound}, there is a vector $\vv \in \R^d$ with $\supp(\vv) = S$, satisfying
\[\norm{\vv}_\infty = \Omega\Par{\frac{k}{\sqrt n}},\quad \norm{\mx^\top \mx \vv}_\infty = 1.\]
\end{corollary}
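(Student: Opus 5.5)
The plan is to read off $\vv$ directly from the definition of the $\infty\to\infty$ operator norm applied to $\mm \defeq [\mx^\top\mx]^{-1}_{S\times S}$, conditioning on the high-probability event of Lemma~\ref{lem:inf_inf_lower_bound}. On that event, $\norm{\mm}_{\infty\to\infty} = \Omega(\frac{k}{\sqrt n})$. We also work on the event, used in that lemma's proof, that $\mx_{:S}$ is well-conditioned, so that $[\mx^\top\mx]_{S\times S}$ is invertible.

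\textbf{Step 1: choose the input vector.} Since $\norm{\cdot}_{\infty\to\infty}$ is the largest row $\ell_1$ norm, pick a row $i\in S$ attaining it and set $\vw \defeq \sign(\mm_{i:})\in\{\pm1\}^S$. If some entry of that row is zero, we instead perturb $\vw$ to have all entries in $\{\pm1\}$ and use any sign for the zero entries. Then $\infnorm{\vw}=1$ and $|(\mm\vw)_i| = \norm{\mm}_{\infty\to\infty}$.

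\textbf{Step 2: define $\vv$.} Let $\vv_S \defeq \mm\vw$ and $\vv_{S^c}\defeq \vzero$. Then $\infnorm{\vv} \ge \norm{\mm}_{\infty\to\infty} = \Omega(\frac{k}{\sqrt n})$. Moreover, $[\mx^\top\mx\vv]_S = [\mx^\top\mx]_{S\times S}\vv_S = \vw$, which has $\ell_\infty$ norm exactly $1$ on $S$.

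\textbf{Step 3 (main obstacle): control the coordinates of $\mx^\top\mx\vv$ outside $S$.} The statement needs the full vector to have $\ell_\infty$ norm $1$, so we must handle $j\notin S$, where $(\mx^\top\mx\vv)_j = \vx_j^\top \mx_{:S}\vv_S$. Here I would argue that $\vv_S$ is a function only of $\mx_{:S}$, hence independent of $\vx_j$. Conditioned on $\mx_{:S}$, each such coordinate is then $\subg(0,\frac Cn\norm{\mx_{:S}\vv_S}_2^2)$.

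By RIP, $\norm{\mx_{:S}\vv_S}_2 \lesssim \norm{\vv_S}_2 \lesssim \norm{\vw}_2 = \sqrt k$, using that $\mm$ has bounded operator norm. Hoeffding (Lemma~\ref{lem:hoeffding}) and a union bound over $j\notin S$ then give $\max_{j\notin S}|(\mx^\top\mx\vv)_j| = O(\sqrt{k\log(d/\delta)/n})$, which is $O(1)$ because $n=\Omega(k\log\frac d\delta)$.

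This yields $\infnorm{\mx^\top\mx\vv} = \Theta(1)$ and at least $1$. Dividing $\vv$ by $\infnorm{\mx^\top\mx\vv}$ normalizes it to exactly $1$, while changing $\infnorm{\vv}$ by only a constant factor, so $\infnorm{\vv}=\Omega(\frac{k}{\sqrt n})$ is preserved.

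A fallback, if the corollary is meant only on $S$ (the natural reading, since $\supp(\vv)=S$ is what the construction uses), is to stop after Step 2: there $\norm{[\mx^\top\mx\vv]_S}_\infty=1$ exactly.

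Finally, the support claim: a zero coordinate of $\vv_S$ can be avoided by perturbing $\vw$ slightly, which preserves both bounds up to constants. This ensures $\supp(\vv)=S$ exactly.
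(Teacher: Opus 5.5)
Your proposal is correct and follows essentially the same route as the paper. Like the paper, you set $\vv_S = [\mx^\top\mx]^{-1}_{S\times S}\vu$ for a sign vector $\vu$ witnessing the $\infty\to\infty$ lower bound of Lemma~\ref{lem:inf_inf_lower_bound}, observe $[\mx^\top\mx\vv]_S=\vu$, and control the coordinates $j\notin S$ by independence of $\vx_j$ from $(\mx_{:S},\vv)$ plus Hoeffding and a union bound, using $\norm{\mx_{:S}\vv_S}_2^2 = O(k)$; your explicit rescaling by $\infnorm{\mx^\top\mx\vv}\in[1,O(1)]$ and the perturbation guaranteeing $\supp(\vv)=S$ exactly are slightly more careful versions of the paper's ``adjust both bounds by a constant'' step.
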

\begin{proof}
    By Lemma~\ref{lem:inf_inf_lower_bound}, with probability $ \ge 1 - \frac \delta 3$, there exists $\vu \in \cbra{-1, +1}^S$ such that 
    \begin{equation*}
    \infnorm{\Brack{\mx^\top\mx}_{S\times S}^{-1}\vu} \geq \frac{ck}{\sqrt n},
    \end{equation*}
    for a constant $c$. Also, henceforth condition on $\mx$ being $(\half, k)$-RIP, which occurs with probability $\ge 1 - \frac \delta 3$.    
    We take $\vv \in \R^d$ to be the vector supported on $S$, satisfying
    \[\vv_S = \Brack{\mx^\top \mx}_{S \times S}^{-1}\vu.\]
    Thus, the lower bound on $\norm{\vv}_\infty$ is trivially satisfied. For the second claim,
    \begin{equation*}
        \infnorm{\mx^\top \mx \vv} = \max\cbra{\infnorm{\mx_{:S}^\top \mx \vv},\infnorm{\mx_{:S^c}^\top \mx \vv}}.
    \end{equation*}
    Since $\supp(\vv) = S$, the first term is $1$. 
    For the second term, notice that $S \perp \mx$, so $\mx_{S^c:}^\top \perp (\mx_{:S}, \vv)$. Thus, for each $j \in S^c$, conditioned on $(\mx_{:S}, \vv)$, we have
    \[\vx_j^\top \mx_{:S}\vv_S \sim \subg\Par{0, \frac{\twonorm{\mx_S\vv_S}^2}{n}},\quad \twonorm{\mx_S\vv_S}^2 = \vu^\top\Brack{\mx^\top\mx}_{S\times S}^{-1}\vu \le 2\norm{\vu}_2^2 = 2k. \]
    Thus, applying Lemma~\ref{lem:hoeffding} and taking a union bound over all $j \in S^c$ shows $\norms{\mx^\top\mx \vv}_\infty = O(1)$ with probability $\ge 1 - \frac \delta 2$. The result follows by adjusting both bounds by a constant.
\end{proof}
\begin{theorem}[Lower bound for adaptive $\ell_\infty$ sparse recovery]
    \label{thm:hard_inst_lower_bound}
Under Models~\ref{model:adv_noise} and~\ref{model:subg}, there is no algorithm that solves Problem~\ref{prob:linfty_sparse} with probability $\ge \frac 3 4$, when $\Omega(k \log d) = n = o(k^2)$.
\end{theorem}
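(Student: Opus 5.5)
We will use a two-point indistinguishability argument built from Corollary~\ref{cor:sep_lemma}. Fix a subset $S \subseteq [d]$ with $|S| = k$, chosen independently of $\mx$, and let $\vv$ be the vector from Corollary~\ref{cor:sep_lemma}. Then $\supp(\vv) = S$, $\norm{\vv}_\infty \ge \frac{ck}{\sqrt n}$, and $\norm{\mx^\top \mx \vv}_\infty = 1$; this holds with probability $\ge 1 - \delta$ for, say, $\delta = \frac 1 8$. Since $\vv$ depends on $\mx$, we need Model~\ref{model:adv_noise}. We then build two instances with the same observation $\vy = \mx\vv$:
\begin{itemize}
\item[(A)] $\vths = \vv$ and $\vxi = \vzero_n$;
\item[(B)] $\vths = \vzero_d$ and $\vxi = \mx\vv$.
\end{itemize}
Both signals are $k$-sparse. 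The noise level in (A) is $\norm{\mx^\top \vxi}_\infty = 0$, and in (B) it is $\norm{\mx^\top\mx\vv}_\infty = 1$.

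Because the observed pair $(\mx,\vy)$ is identical in the two instances, any algorithm outputs the same (possibly randomized) $\vth$ in both. If it solves Problem~\ref{prob:linfty_sparse} in (A), it must have $\norm{\vth - \vv}_\infty \le 0$, so $\vth = \vv$. If it solves (B), it must have $\norm{\vth}_\infty \le C$. By the triangle inequality, succeeding in both would force
\[\frac{ck}{\sqrt n} \le \norm{\vv}_\infty \le \norm{\vth - \vv}_\infty + \norm{\vth}_\infty \le C.\]
This is impossible once $n = o(k^2)$ and $k$ is large, since then $\frac{k}{\sqrt n} \to \infty$. So on the good event for $\mx$, the algorithm fails on at least one of the two instances.

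To get the probability statement, let the adversary pick instance (A) or (B) uniformly at random, independently of the algorithm's internal coins. On the good event, the algorithm succeeds in at most one of the two cases, so its success probability there is at most $\frac 1 2$. Overall the success probability is at most $\frac 1 2 + \delta < \frac 3 4$.

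If the zero-noise instance feels degenerate, we can instead perturb (A) with a small independent noise term, or set $\vths = \vv$ in one instance and $\vths = -\vv$ in the other with noise $\pm\mx\vv$. The separation $2\norm{\vv}_\infty \gg C$ still holds, and both noise levels are $O(1)$.

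The main work is already done in Lemma~\ref{lem:inf_inf_lower_bound} and Corollary~\ref{cor:sep_lemma}. The step needing care is checking their preconditions in the regime $\Omega(k\log d) = n = o(k^2)$ with constant $\delta$, including $k = \Omega(\log\frac 1\delta)$, which holds for large $k$. We also need to confirm that the adaptive model allows $(\vths,\vxi)$ to be chosen as functions of $\mx$, plus an independent random bit. Both checks are immediate from the definitions.
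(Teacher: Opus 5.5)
Your proposal is correct and is essentially the paper's proof. It uses the same two-point construction from Corollary~\ref{cor:sep_lemma}, in which the noise $\mx\vv$ exactly masks the signal $\vv$, specialized to background signal $\bvth = \vzero_d$ and $|S| = k$; the paper instead uses $|S| = |T| = \frac k2$ with an arbitrary $\bvth$ on $T$. Your choice $\delta = \frac 18$ is slightly cleaner than the paper's $\delta = \half$: it stays inside the range $\delta \in (0,\half)$ required by Lemma~\ref{lem:inf_inf_lower_bound} and gives success probability strictly below $\frac 34$, whereas the paper's bound is exactly $\frac 34$.
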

\begin{proof}
Fix two arbitrary sets $S \subseteq [d]$, $T \subseteq [d]$ each with size $\frac k 2$.
In this regime, Corollary~\ref{cor:sep_lemma} holds with probability $\delta = \half$, on support $S$. Let $\bvth \in \R^d$ be an arbitrary vector supported on $T$, and let $\vv$ be the vector guaranteed by Corollary~\ref{cor:sep_lemma} on support $S$, whenever it succeeds. Now let \[\vths_1 \defeq \bvth, \quad \vths_2 \defeq \bvth + \vv, \quad \vxi_1 \defeq \mx \vv, \quad \vxi_2 \defeq \vzero_n.\]
Any algorithm for solving Problem~\ref{prob:linfty_sparse} must then be able to distinguish $\vths_1$ and $\vths_2$, because 
\[\norm{\mx^\top \vxi_1}_\infty = 1,\quad \norm{\mx^\top \vxi_2}_\infty = 0,\]
and $\norms{\vths_1 - \vths_2}_\infty = \omega(1)$ in the stated parameter regime. However, this is a contradiction, because the observation models $(\mx, \vy)$ are the same under either pair $(\vths_1, \vxi_1)$ or $(\vths_2, \vxi_2)$, so no algorithm can do better than random guessing. Thus, the success probability is upper bounded by $\half + \half \cdot \half$.
\end{proof}
We note that Theorem~\ref{thm:hard_inst_lower_bound} extends to a lower bound against variable selection as well.

\begin{theorem}[Lower bound for adaptive variable selection]\label{thm:support_lb}
In the setting of Theorem~\ref{thm:hard_inst_lower_bound}, there is no algorithm that solves Problem~\ref{prob:variable_selection} with probability $\ge \frac 3 4$.
\end{theorem}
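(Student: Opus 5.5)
We reuse the indistinguishability construction from the proof of Theorem~\ref{thm:hard_inst_lower_bound}, but arrange the two instances so that they have \emph{different supports} and both satisfy the signal-to-noise condition \eqref{eq:snr_linfty}. Fix disjoint sets $S, T \subseteq [d]$ with $|S| = |T| = \frac k 2$. Let $\vv$ be the vector from Corollary~\ref{cor:sep_lemma} on support $S$, with $\norm{\vv}_\infty = \Omega(\frac{k}{\sqrt n}) = \omega(1)$ and $\norm{\mx^\top\mx\vv}_\infty = 1$; it exists with probability $\ge \half$. Let $\bvth$ be supported on $T$.

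The two candidate instances are
\[
\vths_1 \defeq \bvth + \vv - \vv \ind_{B} ,\quad \vths_2 \defeq \bvth + \vv,
\]
where the truncation is chosen so that the instances differ in their supports. The simplest version takes $\vths_1 \defeq \bvth$ with $\vxi_1 \defeq \mx\vv$, and $\vths_2 \defeq \bvth + \vv$ with $\vxi_2 \defeq \vzero_n$. Both produce the same observation $\vy = \mx\bvth + \mx\vv$. Their supports are $T$ and $T \cup \supp(\vv)$, and $\supp(\vv) = S$ is nonempty and disjoint from $T$, so the supports differ.

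Next we check \eqref{eq:snr_linfty} for each instance.
\begin{itemize}
\item \textbf{Instance 1.} The noise level is $\norm{\mx^\top\vxi_1}_\infty = \norm{\mx^\top\mx\vv}_\infty = 1$. Choose $\bvth$ with all entries on $T$ equal to some $M > C$; then $\min_{i\in T}|\vths_{1,i}| > C\norm{\mx^\top\vxi_1}_\infty$.
\item \textbf{Instance 2.} The noise is zero, so the condition requires only that every nonzero entry of $\vths_2$ exceed $0$, which holds trivially.
\end{itemize}

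The one subtlety is that entries of $\vv$ on $S$ could be zero, so that $\supp(\vths_2) \ne T \cup S$. This does not matter: $\vv \neq \vzero$ because $\norm{\vv}_\infty = \omega(1)$, so $\supp(\vths_2) \supsetneq T$ still differs from $\supp(\vths_1) = T$. Any algorithm solving Problem~\ref{prob:variable_selection} must therefore output different sets on the two instances.

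Since the observation $(\mx, \vy)$ is identical under both instances, the algorithm's output has the same distribution in either case. Placing a uniform prior over the two instances, whenever Corollary~\ref{cor:sep_lemma} succeeds the algorithm is correct with probability at most $\half$. The overall success probability for the worse of the two instances is then at most $\half + \half\cdot\half = \frac34$, a contradiction.

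The main obstacle is the dependence of the construction on $\mx$, since $\vv$ is a function of $\mx$. This is permitted under Model~\ref{model:adv_noise}, where there are no independence restrictions on $(\vths, \vxi, \mx)$, exactly as in Theorem~\ref{thm:hard_inst_lower_bound}. To make "probability $\ge \frac34$" rigorous, we fix the instance adversarially as a function of $\mx$ and the algorithm's output law given $\mx$: on the success event of Corollary~\ref{cor:sep_lemma}, pick whichever of the two instances the algorithm answers correctly with conditional probability at most $\half$.

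Interestingly, the zero-noise instance is the one where noise is "hidden": $\vv$ is a genuine large signal in instance 2, yet the same data are explained in instance 1 by noise of unit $\mx^\top$-level.
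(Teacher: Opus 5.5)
Your proposal is correct and follows the paper's proof: the same pair $(\vths_1,\vxi_1)=(\bvth,\mx\vv)$ and $(\vths_2,\vxi_2)=(\bvth+\vv,\vzero_n)$, which give identical observations but different supports, the same check that \eqref{eq:snr_linfty} holds for entrywise-large $\bvth$ (and trivially for the noiseless instance), and the same $\half+\half\cdot\half$ counting. Your explicit requirement that $S$ and $T$ be disjoint, and your remark that only $\vv\neq\vzero$ is needed for the supports to differ, tighten points the paper leaves implicit; two small fixes remain (both also apply to the paper). Delete the stray $\vv\ind_B$ line. Also, a bound of exactly $\frac34$ does not contradict ``probability $\ge\frac34$'' unless you invoke Corollary~\ref{cor:sep_lemma} with success probability strictly above $\half$, which is allowed for any constant failure probability.
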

\begin{proof}
Instate the same lower bound instance as in Theorem~\ref{thm:hard_inst_lower_bound}. 
Observe that solving Problem~\ref{prob:variable_selection} would allow us to distinguish whether the support is $T$ or $S \cup T$, i.e., it would let us distinguish $\vths_1$ from $\vths_2$ whenever Corollary~\ref{cor:sep_lemma} succeeds. Also, for sufficiently large $\bvth$ entrywise, 
the signal-to-noise assumption \eqref{eq:snr_linfty} clearly holds for both of our instances.
Thus, the same contradiction holds.
\end{proof}

%% file: tex_file/model2.tex
\section{Variable Selection with Adaptive Measurements}
\label{sec:for_all}

In this section, we consider a partial model of adaptivity that sits between Models~\ref{model:for_each} and~\ref{model:adv_noise}. Intuitively, it treats the noise $\vxi$ as benign, but the signal $\vths$ as adaptive.

\begin{model}[Partially-adaptive model]\label{model:for_all}
In the \emph{partially-adaptive model} of Problems~\ref{prob:variable_selection} and~\ref{prob:linfty_sparse}, $\vxi \perp \mx$, and we make no independence assumptions on $(\vths, \vxi)$ or $(\vths, \mx)$.
\end{model}

We note that Model~\ref{model:for_each} is an instance of Model~\ref{model:for_all}, which in turn is an instance of Model~\ref{model:adv_noise}. This motivates the natural question: does the sample complexity of Problems~\ref{prob:variable_selection} and~\ref{prob:linfty_sparse} under the intermediate Model~\ref{model:for_all} scale as $\approx k$ as in Theorem~\ref{thm:l_inf_norm_bound_model1}, or as $\gtrsim k^2$ as in Theorem~\ref{thm:hard_inst_lower_bound}?

While we do not fully resolve this question, in this section, we demonstrate that nontrivial recovery guarantees are possible under Model~\ref{model:for_all}. Specifically, if we allow for adaptive measurements where we can \emph{exclude} certain coordinates of $\vths$ from observation, then $\approx k$ samples suffice to solve Problem~\ref{prob:variable_selection}. This result is shown in Theorem~\ref{thm:partial-adaptivity}. While our result works in a different (and potentially unrealistic in some scenarios) observation model than standard sparse recovery, we hope it may serve as a stepping stone towards understanding the landscape of partially-adaptive variable selection.

In Section~\ref{ssec:nofp}, we give a helper lemma that shows how to isolate a constant fraction of undiscovered large coordinates of $\vths$. We then recurs upon this procedure in Section~\ref{ssec:recurse_adapt} to prove Theorem~\ref{thm:partial-adaptivity}.

\subsection{Few false positives in thresholding}\label{ssec:nofp}

Recall that the support estimation phase of Algorithm~\ref{alg:IHT+OLS} (Line~\ref{line:support_id}) is based on thresholding $\Abs{\vx_i^\top \vy}$.
In the \emph{oblivious} model, $\mx \perp \vths$, so the interference term
$\innp{\vx_i, \mx_{:S} \vths}$ concentrates at the scale $\infbound$. This no longer applies in the \emph{adaptive} model, where $\vths$ may be chosen adversarially after observing $\mx$. Indeed, we exploit this adaptivity for our worst-case lower bound in Theorem~\ref{thm:hard_inst_lower_bound}.

Despite this worst-case construction, we show that a single round of thresholding based on $\Abs{\vx_i^\top \vy}$ still admits a meaningful guarantee in the setting of Problem~\ref{prob:variable_selection}, where all $i \in \supp(\vths)$ satisfy
 $\Abs{\vths_i} = \Omega(\infbound)$. In other words, every nonzero signal coordinate is sufficiently \emph{heavy}.
In this setting, we show that the number of falsely-selected coordinates via thresholding is $O(k)$, and that the identified coordinates contain at least a constant fraction of the total $\ell_2$ energy of $\vths$.

We remark that this proof is inspired by a similar observation in Lemma 2.1, \cite{BlasiokBKS24}.

\begin{lemma}
    \label{lem:l2_contract}
    Let $\delta \in (0, \frac 1 2)$ and assume that for a universal constant $C' > 0$,
    \[\twonorm{\vths} \le C'\sqrt k \norm{\mx^\top \vxi}_\infty.\]
    In the setting of Problem~\ref{prob:variable_selection} and Models~\ref{model:subg} and~\ref{model:for_all}, let
    \begin{equation*}
    \sfp = \cbra{i \not\in \supp(\vths) : \Abs{\vx_i^\top \vy} \geq \frac{C}{2}\infnorm{\mx^\top \vxi}}, \quad \sfn = \cbra{i \in \supp(\vths) : \Abs{\vx_i^\top \vy} < \frac{C}{2}\infnorm{\mx^\top \vxi}},
    \end{equation*}
    where $C$ is the constant from Problem~\ref{prob:variable_selection}.
    Then if $n = \Omega(k\log \frac d \delta)$ for an appropriate constant, with probability $\ge 1 - \delta$, we have
    \[|\sfp| = O(k),\quad \frac{\twonorm{\vths_{\sfn}}}{\twonorm{\vths}} \le 0.95.\]
    
\end{lemma}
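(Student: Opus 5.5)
We write $S \defeq \supp(\vths)$ and $\sigma \defeq \infbound$. From $\vy = \mx\vths + \vxi$ we get $\mx^\top \vy = \mx^\top\mx\vths + \mx^\top\vxi$. The noise part is always at most $\sigma$ in $\ell_\infty$, and this holds deterministically, so adaptivity of $\vths$ is harmless there. The difficulty lies entirely in the signal term $\vw \defeq \mx^\top \mx \vths$, which may be adversarially aligned with $\mx$. We will not try to control $\vw$ coordinatewise. Instead we control it in aggregate through RIP, which holds uniformly over all sparse vectors and therefore survives adaptivity. This is the same idea as Lemma 2.1 of \cite{BlasiokBKS24}. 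Concretely, we condition on $\mx$ being $(0.1, K)$-RIP for $K = O(k)$, which holds with probability $\ge 1-\delta$ by Proposition~\ref{prop:RIP_sub_gaussian} when $n = \Omega(k\log\frac d\delta)$. Under RIP, for any $T$ with $|T| \le K - k$ we have
\[\twonorm{[\mx^\top\mx\vths]_T} = \sup_{\vu \text{ supp. on } T,\ \twonorm{\vu} = 1} \inprod{\mx\vu}{\mx\vths} \le 1.1\twonorm{\vths},\]
and similarly $\twonorm{[(\mx^\top\mx - \id)\vths]_T} \le 0.1\twonorm{\vths}$ by polarization.

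\textbf{False positives.} Every $i \in \sfp$ satisfies $|\vw_i| \ge (\frac C2 - 1)\sigma \ge \frac C4\sigma$ for $C$ large. Suppose $|\sfp| > m$, where $m \defeq Mk$ for a large constant $M$. Take any $T \subseteq \sfp$ with $|T| = m$; this is valid as long as $K \ge m + k$. Then
\[\sqrt m \cdot \frac C4 \sigma \le \twonorm{\vw_T} \le 1.1\twonorm{\vths} \le 1.1 C'\sqrt k\,\sigma,\]
which is a contradiction once $M > (4.4C'/C)^2$. Hence $|\sfp| \le Mk = O(k)$. This argument is uniform over $\vths$, so it holds under Model~\ref{model:for_all}. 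In fact it only uses the deterministic bound on the noise term, not independence.

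\textbf{False negatives.} For $i \in \sfn \subseteq S$ we have $|\vw_i| < (\frac C2 + 1)\sigma$, while $|\vths_i| > C\sigma$ by \eqref{eq:snr_linfty}. Write $\vw = \vths + \mathbf{g}$ with $\mathbf{g} \defeq (\mx^\top\mx - \id)\vths$. For $i \in \sfn$ this gives
\[|\mathbf{g}_i| \ge |\vths_i| - |\vw_i| \ge |\vths_i| - (\tfrac C2 + 1)\sigma \ge c_0|\vths_i|,\]
where we used $|\vths_i| > C\sigma$, so $(\frac C2+1)\sigma \le (\frac12 + \frac1C)|\vths_i|$ and $c_0 \approx \frac12 - \frac1C$. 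Since $|\sfn| \le k$, RIP yields
\[c_0 \twonorm{\vths_{\sfn}} \le \twonorm{\mathbf{g}_{\sfn}} \le \eps\twonorm{\vths},\]
so $\twonorm{\vths_{\sfn}} \le \frac{\eps}{c_0}\twonorm{\vths} \le 0.95\twonorm{\vths}$ once $\eps$ is a small enough constant, for example $\eps \le 0.4$ with $C \ge 10$. This step uses the signal-strength lower bound crucially, and it does not even need the hypothesis $\twonorm{\vths} \le C'\sqrt k\sigma$.

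\textbf{Main obstacle.} I expect the main care to be needed in the false-positive bound: making sure the RIP order $K = (M+1)k$ stays $O(k)$ and that the constants $C$, $C'$, $M$ and $\eps$ are chosen consistently. The bound $\twonorm{\vw_T} \le 1.1\twonorm{\vths}$ requires RIP on $T \cup S$, which is exactly why $K$ must exceed $m + k$.
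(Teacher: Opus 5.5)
Your proof is correct. For the false-positive bound it is essentially the paper's argument. The paper tests $\mx_{T:}^\top \vy$ against the sign vector on a putative set $T \subseteq \sfp$ and applies Cauchy--Schwarz plus RIP, keeping the noise in the aggregate. You instead strip the noise coordinatewise and bound $\twonorm{\vw_T}$ by a restricted operator norm. These are dual forms of the same estimate and give the same $|\sfp| = O((C'/C)^2 k)$. Since $T \cap \supp(\vths) = \emptyset$, you could even use $\twonorm{\vw_T} = \twonorm{\mathbf{g}_T} \le 0.1\twonorm{\vths}$.

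For the energy bound your route genuinely differs, and it is the better one. The paper lower-bounds $\twonorm{\mx_{\sfn:}^\top \vy}$ by splitting into $\vths_{\sfn}$ and $\vths_{\stp}$ with the loose constants $\frac34, \frac54$. It then needs $(\frac C2 + 1)\sqrt{|\sfn|}\,\infnorm{\mx^\top\vxi} \le 0.1\twonorm{\vths}$ to solve a quadratic in $\alpha$. Because the threshold is exactly half the signal-strength constant, \eqref{eq:snr_linfty} only yields $(\frac C2 + 1)\sqrt{|\sfn|}\,\infnorm{\mx^\top\vxi} < (\frac12 + \frac1C)\twonorm{\vths_{\sfn}}$. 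So, as written, that step does not follow, and the resulting quadratic does not exclude $\alpha = 1$.

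Your decomposition $\vw = \vths + \mathbf{g}$ fixes this. You use the signal strength coordinatewise to get $|\mathbf{g}_i| \ge (\frac12 - \frac1C)|\vths_i|$ on $\sfn$, and then RIP gives $\twonorm{\mathbf{g}_{\sfn}} \le \eps\twonorm{\vths}$. This measures the threshold against $\twonorm{\vths_{\sfn}}$ rather than $\twonorm{\vths}$, which is exactly what closes the argument. It yields $\alpha \le \eps/(\frac12 - \frac1C)$, which is arbitrarily small for small $\eps$, and it does not need the hypothesis $\twonorm{\vths} \le C'\sqrt{k}\,\infnorm{\mx^\top\vxi}$.

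One arithmetic slip: with $C \ge 10$ you have $c_0 \ge 0.4$, so $\eps \le 0.4$ only gives $\alpha \le 1$; you need $\eps \le 0.38$. Your actual choice $\eps = 0.1$ gives $\alpha \le 0.25$, so the conclusion is unaffected.
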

\begin{proof}
    We prove the first claim through contradiction. Suppose that there is $T \subseteq \sfp$ such that 
    \begin{equation}\label{eq:false_count}|T| = \frac{64(C')^2}{C^2} \cdot k + 1,\end{equation}
    which exists whenenver $|\sfp| > \frac{64(C')^2}{C^2} \cdot k$.
    By the thresholding rule, for any $i \in T$, $\Abs{\vx_i^\top \vy} \geq \frac{C}{2}\infnorm{\mx^\top \vxi}$. Define a vector $\vv \in \R^T$ such that $\vv_i = \sign(\vx_i^\top \vy)$. Then, 
    \begin{equation*}
        \innp{\vv, \mx_{T:}^\top\vy} = \sum_{i\in T}\Abs{\vx_i^\top \vy} \geq \frac{C}{2}|T| \cdot \infnorm{\mx^\top \vxi}.
    \end{equation*}
    On the other hand, notice that  
    \begin{equation*}
    \innp{\vv, \mx_{T:}^\top\vy} = \innp{\vv, \mx_{T:}^\top\mx\vths} + \innp{\vv, \mx_{T:}^\top\vxi} = \innp{\mx_{:T}\vv, \mx\vths} + \innp{\vv, \mx_{T:}^\top\vxi}.
    \end{equation*}
    By the Cauchy-Schwarz inequality, we have 
    \begin{equation*}
    \Abs{\innp{\mx_{:T}\vv, \mx\vths}}\leq \twonorm{\mx_{:T}\vv}\cdot \twonorm{\mx\vths}, \quad \Abs{\innp{\vv, \mx_{T:}^\top\vxi}} \leq \twonorm{\vv}\cdot \twonorm{\mx^\top_{T:}\vxi} \leq |T|\cdot \infnorm{\mx^\top \vxi}.
    \end{equation*}
    For sufficiently large $n$ as stated, $\mx$ satisfies $(\frac 1 2, |T| + k)$-RIP, so
    \begin{equation*}
        \twonorm{\mx_{:T}\vv}^2 \leq 2\twonorm{\vv}^2 = 2 |T|, \quad \twonorm{\mx\vths}^2 \leq 2\twonorm{\vths}^2.
    \end{equation*}
    This yields
    \begin{equation*}
        \Abs{\innp{\vv, \mx_{T:}^\top\vy}} \leq 2 \twonorm{\vths}\sqrt{|T|} + |T|\cdot \infnorm{\mx^\top \vxi}.
    \end{equation*}
    Combining gives the contradiction, for sufficiently large $C > 4$,
    \begin{equation*}
        2 \twonorm{\vths}\sqrt{|T|} + |T|\cdot \infnorm{\mx^\top \vxi} \geq \frac{C}{2}|T|\cdot \infnorm{\mx^\top \vxi} \implies  |T| \leq \frac{64\twonorm{\vths}^2}{C^2\infnorm{\mx^\top\vxi}^2} \le \frac{64(C')^2}{C^2} \cdot k.
    \end{equation*}
    
    To prove the second claim, we use a similar strategy.
    By the thresholding rule, for any $i \in \sfn$, we know $\Abs{\vx_i^\top \vy} < \tfrac{C}{2}\infnorm{\mx^\top \vxi}$. Then, 
    \begin{equation*}
        \twonorm{\mx_{\sfn:}^\top\vy} = \sqrt{\sum_{i\in \sfn}\Abs{\vx_i^\top \vy}^2} < \frac{C}{2}\sqrt{|\sfn|} \infnorm{\mx^\top \vxi}.
    \end{equation*}
    On the other hand, we let $\stp = S\setminus \sfn$ and notice that   
    \begin{equation*}
        \twonorm{\mx_{\sfn:}^\top\vy} \geq \twonorm{\mx_{\sfn:}^\top\mx_{:\sfn}\vths_\sfn} - \twonorm{\mx_{\sfn:}^\top\mx_{:\stp}\vths_\stp} - \twonorm{\mx_{\sfn:}^\top \vxi}.
    \end{equation*}
    A direct calculation also gives, under $(\frac 1 4, k)$-RIP,
    \begin{gather*}
        \twonorm{\mx_{\sfn:}^\top\mx_{:\sfn}\vths_\sfn} \geq \frac 3 4 \twonorm{\vths_{\sfn}}, \quad \twonorm{\mx_{\sfn:}^\top\mx_{:\stp}\vths_\stp} \leq \frac 5 4 \twonorm{\vths_{\stp}},\\
        \twonorm{\mx_{\sfn:}^\top \vxi} \leq \sqrt{|\sfn|}\infnorm{\mx^\top \vxi}.
    \end{gather*}
    Therefore, we have the following lower bound: 
    \begin{equation*}
        \twonorm{\mx_{\sfn:}^\top\vy} \geq \paren{\twonorm{\vths_{\sfn}} - \twonorm{\vths_{\stp}}}  - \half \twonorm{\vths} - \sqrt{|\sfn|} \infnorm{\mx^\top\vxi}.
    \end{equation*}
        To avoid contradiction, we require
    \begin{equation*}
        \paren{\twonorm{\vths_{\sfn}} - \twonorm{\vths_{\stp}}}  - \half \twonorm{\vths} - \sqrt{|\sfn|} \infbound \leq \frac{C}{2} \sqrt{|\sfn|}\infbound,
    \end{equation*}
    which for $C'$ sufficiently large in Problem~\ref{prob:variable_selection}, is equivalent to 
    \begin{equation*}
        \alpha - \sqrt{1 - \alpha^2} \leq \half + \frac{(\frac C 2 + 1)\sqrt{|\sfn|}\infbound}{\twonorm{\vths}} \le \frac 3 5,\text{ for } \alpha \defeq \frac{\twonorm{\vths_{\sfn}}}{\twonorm{\vths}}.
    \end{equation*}
    Finally, $\alpha \in (0, 1)$ implies $\alpha \le 0.95$ as claimed, by solving a quadratic.
\end{proof}

\subsection{Recursive support estimation}\label{ssec:recurse_adapt}

We now describe an adaptive algorithm that solves Problem~\ref{prob:variable_selection} using $\approx k$ measurements. The specific form of adaptivity we use is the ability to query new observations using Algorithm~\ref{alg:obs_gen}.

\begin{algorithm}[ht]
\DontPrintSemicolon
    \caption{$\AO(n, S)$}
    \label{alg:obs_gen}
    Generate $\mx \in \R^{n\times d}$, $\vxi \in \R^n$ under Models~\ref{model:subg} and~\ref{model:for_all}\;
    $\mx_{:S} \gets \vzero_{n\times |S|}$\;
    Observe $\vy \gets \mx \vths + \vxi$\;
    \Return {$(\mx, \vy)$}
\end{algorithm}
In other words, we fix the signal $\vths$ in all adaptive calls to Algorithm~\ref{alg:obs_gen}, but each call generates a new independent $(\mx, \vxi)$ respecting the setting of Problem~\ref{prob:variable_selection}. Note that $\vths$ can arbitrarily depend on all of these measurement and noise generations under Model~\ref{model:for_all}; for example, we can then reuse these same measurements and noise on future instances of Problem~\ref{prob:variable_selection}. However, our adaptive observation model in Algorithm~\ref{alg:obs_gen} also assumes the ability to mute specified columns of $\mx$. This can be a strong assumption, and obtaining a comparable result without this access is an interesting open direction.

We give our variable selection algorithm in Algorithm~\ref{alg:adap_est}, which assumes access to Algorithm~\ref{alg:obs_gen}.

\begin{algorithm}[ht]
\DontPrintSemicolon
    \caption{$\AOSR(k, n, d, N, R, r_2, r_\infty, \delta)$}
    \label{alg:adap_est}
    $\mx^{(0)}, \vy^{(0)} \gets \AO(\frac n 3, \emptyset)$\;
    $\vhth_0 \gets \IHT(\mx^{(0)}, \vy^{(0)}, k, R, r_2)$\;
    $T_0 \gets \supp(\vhth_0)$\;
    \For {$i \in [N]$}{
    $\mx^{(i)}, \vy^{(i)} \gets \AO(\frac n {3N}, T_{i-1})$\;
    $T_i \gets T_{i-1} \cup\{j: |\vx^{(i)\top}_j\vy^{(i)}_r| \geq r_\infty\}$\;
    }
    $S \gets T_N$\;
    $\mx^{(N + 1)}, \vy^{(N + 1)} \gets \AO\paren{\frac n {3}, S^c}$\;
    $\vth \gets H_k\paren{[\mx^{(N + 1)\top}\mx^{(N + 1)}]_{S\times S}^{-1}\mx^{(N + 1)\top}_{S:} \vy^{(N + 1)}}$\;
    \Return {$\vth$}
\end{algorithm}

The algorithm consists of three phases.
\begin{enumerate}
    \item In the first phase, we run $\IHT$ to obtain a warm start and a finer control on $\twonorm{\vths}$.
    \item In the second phase, we perform a sequence of adaptive queries (leveraging our access to Algorithm~\ref{alg:adap_est}) to identify a bounded-size superset of $\supp(\vths)$.
    \item In the third phase, once the support is recovered, we run simple OLS on the union of all estimated supports to obtain the desired $\ell_\infty$ estimation guarantee.
\end{enumerate}
The adaptive querying phase is the core of the algorithm and is built directly upon Lemma~\ref{lem:l2_contract}.
At a high level, this phase maintains a growing set $T_i$ of coordinates identified as heavy up to round $i$.
In subsequent rounds, the columns of the design matrix indexed by $T_i$ are muted, so that their contribution no longer interferes with future measurements.
Lemma~\ref{lem:l2_contract} ensures two key properties at each round:
(1) \emph{false discovery control}, which guarantees that the number of incorrectly identified coordinates is proportional to the number of remaining true coordinates, and hence that the final estimated support does not grow excessively;
and (2) \emph{\(\ell_2\)-energy contraction}, which guarantees that the remaining signal energy decreases geometrically across rounds. We use these two properties to show that the algorithm finds all signal coordinates in $O(\log k)$ adaptive rounds.
\begin{theorem}
\label{thm:partial-adaptivity}
Let $\delta \in (0, \half)$, $n = \Omega(k\log(k)\log (\frac d \delta))$ for an appropriate constant, and $k \in [d]$. Under Models~\ref{model:subg} and~\ref{model:for_all}, if Algorithm~\ref{alg:adap_est} is called with parameters
\[R \ge \twonorm{\vths},\quad r_2 < \sigma\text{ where } \sigma \defeq \max_{i \in \{0\} \cup [N+1]} \norm{\mx^{(i)\top}\vxi^{(i)}}_\infty,\quad r_\infty \ge \frac{C\sigma}{2},\quad N = \Omega(\log k),\]
then in the setting of Problem~\ref{prob:variable_selection}, with probability $\ge 1 - \delta$, the output of Algorithm~\ref{alg:adap_est} satisfies
    \begin{equation*}
        \supp(\vth) = \supp(\vths), \quad \infnorm{\vth - \vths} = O\Par{\sigma}.
    \end{equation*}
\end{theorem}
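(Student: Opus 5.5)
We follow the three phases of Algorithm~\ref{alg:adap_est}, conditioning on a union of good events whose total failure probability is at most $\delta$. These events are: every $\mx^{(i)}$ is $(\frac14, O(k))$-RIP, by Proposition~\ref{prop:RIP_sub_gaussian}, which uses $\frac{n}{3N} = \Omega(k\log\frac d\delta)$; and every invocation of Lemma~\ref{lem:l2_contract} and Corollary~\ref{cor:noise_linf_bound_indep} succeeds, each with failure probability $\frac{\delta}{O(N)}$.

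\textbf{Phase 1.} By Lemma~\ref{lem:IHT_l2_recover} with $r_2 < \sigma$, the output $\vhth_0$ satisfies $\twonorm{\vhth_0 - \vths} \le \sigma + 5\sqrt{3k}\sigma = O(\sqrt k\sigma)$. We use this only to get a bounded-size initial set and residual energy: $|T_0| \le k$. Moreover, every coordinate in $\supp(\vths)\setminus T_0$ has $\vhth_0$ equal to zero there, so $\twonorm{\vths_{T_0^c}} \le \twonorm{\vhth_0-\vths} = O(\sqrt k \sigma)$.

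\textbf{Phase 2.} The key observation is that muting the columns $T_{i-1}$ makes round $i$ an instance of Problem~\ref{prob:variable_selection} with signal $\vths^{[i]} \defeq \vths_{T_{i-1}^c}$. Indeed, $\vy^{(i)} = \mx^{(i)}_{:T_{i-1}^c}\vths_{T_{i-1}^c} + \vxi^{(i)}$. This residual signal is still $k$-sparse and every nonzero entry is heavy, and $\vxi^{(i)} \perp \mx^{(i)}$, so Model~\ref{model:for_all} holds. We maintain the invariant $\twonorm{\vths^{[i]}} \le C'\sqrt k\sigma$, which holds at $i=1$ by Phase 1 and is monotone since $T_i$ only grows. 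Lemma~\ref{lem:l2_contract} then applies. Its proof goes through with $\mx$ replaced by the muted matrix, since RIP of the unmuted columns suffices. It yields two facts for each round: at most $O(k)$ false positives are added, and $\twonorm{\vths^{[i+1]}} \le 0.95\twonorm{\vths^{[i]}}$, since $\sfn$ is exactly the undiscovered support and $r_\infty \ge \frac C2\sigma$ only shrinks $\sfp$ relative to the lemma. After $N = \Omega(\log k)$ rounds, $\twonorm{\vths^{[N+1]}} \le 0.95^N C'\sqrt k\sigma < C\sigma$. Every remaining support entry has magnitude $> C\sigma$, so the residual signal is zero, i.e.\ $\supp(\vths)\subseteq S$.

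\textbf{Main obstacle.} We expect the hardest step to be bounding $|S|$: a naive count gives $O(kN) = O(k\log k)$. We would sharpen the false-positive bound of Lemma~\ref{lem:l2_contract} to $O(\twonorm{\vths^{[i]}}^2/\sigma^2)$, which its proof in fact gives. Summing this geometric series yields $|S| = O(k)$. If this sharpening fails, the $\log k$ slack in $n$ absorbs $|S| = O(k\log k)$ anyway: the last phase uses $\frac n3 = \Omega(k\log k\log\frac d\delta)$ samples, so RIP at sparsity $|S|$ still holds. This is where we believe the extra $\log k$ in the sample size is spent.

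\textbf{Phase 3.} Muting $S^c$ gives $\vy^{(N+1)} = \mx^{(N+1)}_{:S}\vths_S + \vxi^{(N+1)}$ exactly, so OLS on $S$ has error $[\mx^\top\mx]^{-1}_{S\times S}\mx_{S:}^\top\vxi^{(N+1)}$. We must check the hypotheses of Corollary~\ref{cor:noise_linf_bound_indep}. The set $S$ is determined by earlier rounds and hence is independent of the fresh $\mx^{(N+1)}$. The noise $\vxi^{(N+1)}$ is independent of $\mx^{(N+1)}$ by Model~\ref{model:for_all}. The adaptivity of $\vths$ is harmless here because $\vths$ cancels out of the error term. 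The corollary, at sparsity $|S|$, then gives error at most $8\sigma$ in $\ell_\infty$. Finally, since all support entries exceed $C\sigma \gg 16\sigma$ while off-support entries of the OLS estimate are at most $8\sigma$, $H_k$ keeps exactly $\supp(\vths)$. Applying Lemma~\ref{lem:proper} then gives $\infnorm{\vth-\vths} = O(\sigma)$.
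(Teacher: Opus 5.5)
Your proposal follows the paper's proof essentially step for step: the same IHT warm start giving $\twonorm{\vths_{T_0^c}} = O(\sqrt{k}\,\sigma)$, the same reduction of each muted round to Lemma~\ref{lem:l2_contract}, the same termination argument (the paper phrases it as $|S_i| C^2\sigma^2 \le 0.95^{2i}\cdot 100k\sigma^2$), and the same finish via OLS, Corollary~\ref{cor:noise_linf_bound_indep} and Lemma~\ref{lem:proper}.

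The one genuine difference is your sharpened false-positive count, and it is valid. The contradiction argument in Lemma~\ref{lem:l2_contract} shows that in round $i$ any $T \subseteq \sfp$ satisfies $|T| \le 64\twonorm{\vths_{T_{i-1}^c}}^2/(C^2\sigma^2)$, as long as RIP holds at sparsity $|T| + k = O(k)$. Summing the resulting geometric series gives $|S| = O(k)$ rather than the paper's $O(k\log k)$, so Phase III needs RIP only at sparsity $O(k)$. The extra $\log k$ in $n$ is forced by Phase II anyway, since each of the $N = \Theta(\log k)$ rounds gets only $\frac{n}{3N}$ rows, as you already use. With your refinement it is not needed in Phase III at all.

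Three steps do not hold as you state them. The paper's proof either makes the same move or is silent on them.

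\emph{Threshold monotonicity.} Raising $r_\infty$ above $\frac C2 \sigma$ shrinks $\sfp$ but \emph{enlarges} the set of undiscovered support coordinates. So ``$\sfn$ is exactly the undiscovered support'' is true only at $r_\infty = \frac C2\sigma$, and the $0.95$ contraction does not transfer to larger thresholds. For $r_\infty$ large enough, Phase II adds nothing, and any support coordinate missed by IHT is lost. The paper's remark that both conclusions are monotone in the threshold fails in the same way. You need an upper cap as well, e.g.\ $r_\infty \le \frac12 \min_{j \in \supp(\vths)} \Abs{\vths_j}$; this window is nonempty since the minimum exceeds $C\sigma$. The energy-contraction argument must then be redone at threshold $r_\infty$, where it uses $r_\infty \sqrt{|\sfn|} \le \frac12 \twonorm{\vths_{\sfn}}$.

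\emph{Independence in Phase III.} The values of $\vths$ cancel out of the error, but its support does not. Since $S \supseteq \supp(\vths)$, ``$S$ is determined by earlier rounds'' gives $S \perp \mx^{(N+1)}$ only if $\vths$ is chosen independently of $\mx^{(N+1)}$. The discussion after Algorithm~\ref{alg:obs_gen} explicitly does not grant this. The paper's appeal to ``our independence assumption'' relies on the same unstated restriction, and a union bound over all candidate $S$ would instead cost roughly a factor $|S|$ in $n$.

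\emph{Exact support from $H_k$.} $H_k$ returns exactly $\supp(\vths)$ only when $\nnz(\vths) = k$. Otherwise it retains small but nonzero off-support entries of the OLS vector. Exact support recovery then needs a final threshold, e.g.\ at $\frac C2\sigma$, which your $8\sigma$ margin does justify.
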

\begin{proof}
By Proposition~\ref{prop:RIP_sub_gaussian}, for large enough $n$, with probability $\ge 1 - \frac \delta 4$, $\mx^{(0)}, \mx^{(1)},\ldots,\mx^{(N)}$ all satisfy $(0.1, O(k))$-RIP, and $\mx^{(N)}$ satisfies $(0.1, O(k\log k))$-RIP, for appropriate constants. We condition on these events henceforth, and denote $S^\star \defeq \supp(\vths)$.
\paragraph{Phase I: Warm start.}
By Lemma~\ref{lem:IHT_l2_recover}, with probability at least \(1-\frac \delta 5 \), our $\IHT$ estimator
\(\vhth_0\) satisfies
\[
\twonorm{\vhth_0-\vths}
\le 10\sqrt k\sigma.
\]
Define the residual support $S_0 = S^\star\setminus T_0$.
Then
\[
\twonorm{\vths_{S_0}}
\le 
\twonorm{\vhth_0 -\vths}
\le 10\sqrt k \sigma.
\]
This warm start ensures that when entering the adaptive Phase II, the truncated signal $\vths_{S_0}$ satisfies
the \(\ell_2\)-norm condition required by Lemma~\ref{lem:l2_contract}. 

\paragraph{Phase II: Adaptive support identification.}
For each round \(i\ge 1\), let
$S_i = S^\star \setminus T_i$.
By construction of the algorithm, all columns \(\mx_{:T_i}\) are muted in future rounds.
Therefore, the observation \((\mx^{(i)},\vy^{(i)})\) is equivalent to the linear model
\begin{equation*}
\vy^{(i)} = \mx^{(i)} \vth^{(i)} + \vxi^{(i)}, \qquad \vth^{(i)} = \vths_{S_i}.
\end{equation*}
We apply Lemma~\ref{lem:l2_contract} to each round.
Since \(\min_{j\in S^\star}|\vths_j|\ge C\sigma\) under Problem~\ref{prob:variable_selection}, 
and $\norms{\vths_{S_i}}_2 \le \norms{\vths_{S_0}}_2 \le 10\sqrt k \sigma$,
the lemma is applicable at every iteration. Note that the conclusion still clearly holds if the truncation threshold is set larger than $\frac{C\sigma}{2}$, as both conclusions are monotone in the threshold.
Then, Lemma~\ref{lem:l2_contract} yields two uniform guarantees with probability $1- \frac \delta {4N}$.
\begin{enumerate}
    \item (\emph{False discovery control.}) The number of false inclusions at round \(i\) is at most \(k\).
    \item (\emph{\(\ell_2\)-energy contraction.})
    $\norms{\vths_{S_{i+1}}}_2 \le 
    0.95\norms{\vths_{S_i}}_2$.
    
\end{enumerate}
Here we note that the constant in the false discovery control property was selected by taking $C \ge 80$ and $C' = 10$ in \eqref{eq:false_count}.
By the assumption in Problem~\ref{prob:variable_selection}, we obtain that 
\[|S_i| \cdot C^2 \sigma^2 \le \twonorm{\vths_{S_i}}^2 \le 0.95^{2i} \cdot 100k\sigma^2,\]
which implies a geometric decay of support size: $|S_i| \le 0.95^{2i} \cdot \frac{100k}{C^2}$. Thus, after $N =  O(\log k)$ rounds, we have $|S_N| < 1$, i.e., $S_N = \emptyset$ and all true coordinates
have been identified. Moreover, the total size of the learned support $S \defeq T_N$ is at most $O(k\log k)$.

\paragraph{Phase III: OLS solution.}
Once a superset $S$ of the exact support $S^\star$ is recovered, we run ordinary least squares on the
identified support and then truncate. Denote $(\mx, \vxi) \defeq (\mx^{(N + 1)}, \vxi^{(N + 1)})$ for this part of the proof. We obtain that 
\begin{equation*}
    \vthp \defeq [\mx^\top\mx]_{S\times S}^{-1}\mx_{S:}^\top \vy = \vths + [\mx^\top\mx]_{S\times S}^{-1}\mx_{S:}^\top \vxi. 
\end{equation*}
By Lemma~\ref{lem:proper}, we know the top-$k$ truncation only affects the $\ell_\infty$ error by a factor of 2: 
\begin{equation*}
    \infnorm{\vth - \vths} = \infnorm{H_k(\vthp) - \vths} \le 2\infnorm{\vthp - \vths} = 2\infnorm{[\mx^\top\mx]_{S\times S}^{-1}\mx_{S:}^\top \vxi} = O\Par{\sigma},
\end{equation*}
with probability $\ge 1 - \frac \delta 4$, where the last inequality applied Corollary~\ref{cor:noise_linf_bound_indep} and leveraged our independence assumption. Taking a union bound over all failure events concludes the proof. 
\end{proof}

%% file: tex_file/append.tex
\section{Discussion of Error Metrics}
\label{append:erro_metric}

While our analysis naturally leads to bounds in terms of the quantity $\norms{\mx^\top \vxi}_\infty$, there are other common ways to parameterize the error of algorithms for Problem~\ref{prob:linfty_sparse}. This includes
\begin{equation}\label{eq:ols_error}\norm{\Brack{\mx^\top\mx}^{-1}_{S\times S} \mx^\top_{S:}\vxi}_\infty \text{ for } S \defeq \supp(\vths)\end{equation}
by e.g., \cite{wainwright2019high}, and
\[\frac{\norm{\vxi}_2}{\sqrt n},\quad \norm{\vxi}_\infty,\]
as sometimes seen in works from the heavy hitters literature.

In this section, we show that in the oblivious setting of Model~\ref{model:for_each}, these $\ell_\infty$-type error metrics are equivalent up to constant factors.
In contrast, under the adversarial setting of Model~\ref{model:adv_noise}, no such guarantee is achievable for other error metrics in general.

Lemma~\ref{lem:equiv_err} establishes conversions between these different error metrics in the oblivious setting.

\begin{lemma}
    \label{lem:equiv_err}
    Let $\delta \in (0,\half)$. In the context of Problem~\ref{prob:linfty_sparse} under Models~\ref{model:for_each} and~\ref{model:subg}, if $n =\Omega(k\log \frac d \delta)$ and $k = \Omega(\log \frac 1 \delta)$ for appropriate constants, then with probability $\ge 1 - \delta$, we have for $S \defeq \supp(\vths)$,\begin{align*}\infnorm{\Brack{\mx^\top\mx}^{-1}_{S\times S} \mx^\top_{S:}\vxi} &\stepa{=} \Theta\paren{\infnorm{\mx_{S:}^\top \vxi}} \stepb{=} O\paren{\twonorm{\vxi}\sqrt{\frac{\log \frac k \delta}{n}}} \stepc{=} O\Par{\infnorm{\vxi}\sqrt{\log \frac k \delta}}, \\
\infnorm{ \mx^\top_{S:}\vxi} &\stepd{=} \Omega\Par{\twonorm{\vxi}\sqrt{\frac{1}{n}}} \stepe{=} \Omega\Par{\infnorm{\vxi}\sqrt{\frac{1}{n}}}.
    \end{align*}
\end{lemma}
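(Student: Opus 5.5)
The plan is to prove the five relations separately, conditioning throughout on $\vxi$ (and $S$), which is legitimate under Model~\ref{model:for_each} by Lemma~\ref{lem:fix_param_indep_model}. Once $\vxi$ and $S$ are fixed, the vector $\mx_{S:}^\top \vxi$ has i.i.d.\ coordinates $\vx_i^\top \vxi \sim \subg(0, \frac{C}{n}\norm{\vxi}_2^2)$, with variance exactly $\frac 1 n \norm{\vxi}_2^2$. Each relation then reduces to a scalar concentration or anti-concentration statement about $k$ independent sub-Gaussians. The failure probability $\delta$ is split evenly among the steps.

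\textbf{Steps (a), (b), (c).} Step (a) is exactly \eqref{eq:xxinv_v_inf} of Lemma~\ref{lem:noise_linf_bound_fix}, applied with $\vv \gets \vxi$. That lemma already gives the two-sided bound with constants $\frac16$ and $6$, under $n = \Omega(k \log \frac d \delta)$. For (b), I would apply Lemma~\ref{lem:hoeffding} to each $\vx_i^\top\vxi$ for $i \in S$ and take a union bound over the $|S| \le k$ coordinates. This gives $\infnorm{\mx_{S:}^\top\vxi} \le \norm{\vxi}_2\sqrt{2C\log(\frac{2k}{\delta})/n}$. Step (c) is the deterministic inequality $\norm{\vxi}_2 \le \sqrt n \infnorm{\vxi}$, and (e) is the equally trivial $\norm{\vxi}_2 \ge \infnorm{\vxi}$.

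\textbf{Step (d): the main obstacle.} Here we need a lower bound on the maximum, which requires anti-concentration rather than concentration. By Lemma~\ref{lem:anticonc}, each $|\vx_i^\top \vxi|$ has mean at least $c_1 \norm{\vxi}_2/\sqrt n$ and variance $O(\norm{\vxi}_2^2/n)$. Cantelli's inequality, exactly as in \eqref{eq:anticonc_conclusion}, then gives a constant $c>0$ with
\[
\Pr\Brack{|\vx_i^\top\vxi| \ge c\norm{\vxi}_2/\sqrt n} \ge c .
\]
These events are independent across $i \in S$ given $\vxi$. Hence the probability that all of them fail is at most $(1-c)^k \le \delta/3$, which holds once $k = \Omega(\log\frac1\delta)$; this is precisely where that hypothesis enters. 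On the complementary event, $\infnorm{\mx_{S:}^\top \vxi} \ge c\norm{\vxi}_2/\sqrt n$.

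A union bound over the events from (a), (b) and (d) completes the proof. The only care needed is to keep $S$ and $\vxi$ independent of $\mx$, so that the coordinates are genuinely independent sub-Gaussians after conditioning.
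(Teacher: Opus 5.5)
Your proposal is correct and follows the paper's proof essentially step for step. Step (a) comes from Lemma~\ref{lem:noise_linf_bound_fix}, (c) and (e) are deterministic norm inequalities, (b) is Hoeffding plus a union bound over $S$, and (d) uses the Cantelli-based anticoncentration of \eqref{eq:anticonc_conclusion} together with independence of the draws $\vx_i^\top\vxi$ given $\vxi$. Like the paper, your step (d) implicitly uses $|S| = k$ (or at least $|S| = \Omega(\log\frac1\delta)$), which the statement should really assume, since $S = \supp(\vths)$ may be smaller than $k$.
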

\begin{proof}
The identity $(a)$ follows directly from Lemma~\ref{lem:noise_linf_bound_fix}, and the bounds $(c)$, $(e)$ hold for all $\vxi \in \R^n$.

It remains to establish $(b)$ and $(d)$.
For any $i \in S$, conditioned on $\vxi$, we have
$\vx_i^\top \vxi \sim \subg(0, \frac C n \norms{\vxi}_2^2)$ with variance $\frac 1 n \norms{\vxi}_2^2$. The maximum of $k$ of these variables can be bounded by Lemma~\ref{lem:hoeffding} and a union bound, yielding $(b)$. Finally, there is a constant probability that each such draw $\vx_i^\top \vxi$ obeys the bound in $(d)$, by the same proof strategy as used in establishing \eqref{eq:anticonc_conclusion}. Under the stated parameter range on $k$, the probability $(d)$ fails under $k$ independent draws is at most $\delta$.

\end{proof}

We mention that both $(b)$ and $(d)$ can be tight: if $\vx_i$ is entrywise a (scaled) Gaussian, then $(b)$ is tight, and if $\vx_i$ is entrywise a (scaled) Rademacher, then for $1$-sparse $\vxi$, $(d)$ is tight. 

Regardless, Lemma~\ref{lem:linf_impossible_adv_1} establishes a complementary impossibility result: in the adaptive setting, no algorithm can guarantee an $\ell_\infty$ estimation error on the order of any of the other candidate metrics, even taking the more conservative of the pairs $(b)$, $(d)$ and $(a)$, $(e)$.

\begin{lemma}
\label{lem:linf_impossible_adv_1}
In the context of Problem~\ref{prob:linfty_sparse} under Models~\ref{model:adv_noise} and~\ref{model:subg}, denoting $S \defeq \supp(\vths)$, there is no algorithm outputting an estimate $\vth \in \R^d$ that can guarantee that with probability $\ge \frac 2 3$,
\begin{equation*} 
\norm{\vth - \vths}_\infty = O\paren{\infnorm{\vxi}}
\quad \text{or} \quad
O\paren{\twonorm{\vxi}\sqrt{\frac{\log k}{n}}} \quad \text{or} \quad O\Par{\infnorm{\Brack{\mx^\top\mx}^{-1}_{S\times S} \mx^\top_{S:}\vxi}}.
\end{equation*}
\end{lemma}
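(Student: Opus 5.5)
We will prove Lemma~\ref{lem:linf_impossible_adv_1} with an indistinguishability argument of the same type as Theorem~\ref{thm:hard_inst_lower_bound}. We build two instances $(\vths_1,\vxi_1)$ and $(\vths_2,\vxi_2)$ that produce the same observation $\vy$. In both instances every candidate error metric on the right-hand side is $o(\norms{\vths_1-\vths_2}_\infty)$. Any estimator then fails on one of the two instances with probability at least $\half$, which is more than $\frac 1 3$. Since Model~\ref{model:adv_noise} allows $(\vths,\vxi)$ to depend on $\mx$, we may always choose the instance after seeing $\mx$. The clean way to make $\vy$ coincide is to take $\vxi_2=\vzero_n$ and $\vxi_1=\mx(\vths_2-\vths_1)$. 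Then $\vy=\mx\vths_2$ in both cases, and every metric vanishes on instance $2$. So it only remains to make the instance-$1$ metrics small compared with $\norms{\vths_1-\vths_2}_\infty$.

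\textbf{The $\ell_\infty$ and scaled $\ell_2$ metrics.} Take $\vths_1=\vzero_d$ and $\vths_2=\vv$ with $\vv$ $1$-sparse, say $\vv=\lam\ve_1$. This gives $\vxi_1=\lam\vx_1$. Under Model~\ref{model:subg} we have $\norms{\vx_1}_2\le \frac 3 2$ with high probability by RIP. It follows that $\norms{\vxi_1}_2\sqrt{\log k/n}=O(\lam\sqrt{\log k/n})=o(\lam)$ whenever $n\gg\log k$. The metric $\norms{\vxi}_\infty$ is handled by a sharper choice. Pick any $\vv$ supported on $k$ coordinates with $\vv_1=\lam$ and the remaining entries chosen so that $\mx\vv$ is spread out; the simplest choice is a random sign vector scaled by $\lam$. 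Then $\mx\vv$ has sub-Gaussian entries of size $O(\lam\sqrt{k/n})$, and hence $\norms{\mx\vv}_\infty=O(\lam\sqrt{k\log n/n})$, which can still be large. A better route is the $1$-sparse case itself. There $\norms{\vxi_1}_\infty=\lam\norms{\vx_1}_\infty=O(\lam\sqrt{\log n/n})=o(\lam)$, which already suffices. So a single $1$-sparse construction handles both the first and the second metric.

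\textbf{The OLS-type metric.} This case uses the paper's machinery. If $S=\supp(\vths_1)$ were empty the metric would be trivially zero, so to make the instance meaningful we take $\vths_1=\bvth$ supported on some $T$. Set $\vths_2=\bvth+\vv$ and $\vxi_1=\mx\vv$, and choose $\supp(\vv)\subseteq T$ so that the metric is evaluated with $S=T$. The metric is then
\[
\norms{[\mx^\top\mx]^{-1}_{T\times T}\mx_{T:}^\top\mx\vv}_\infty=\norms{\vv}_\infty,
\]
which is exactly the error we need to beat, so this choice fails. We therefore put $\vv$ on a disjoint set $U$ instead; in fact the $1$-sparse choice $\vv=\lam\ve_j$ with $j\notin T$ suffices. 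The metric becomes $\lam\norms{[\mx^\top\mx]^{-1}_{T\times T}\mx_{T:}^\top\vx_j}_\infty$. Since $j\notin T$, the set $T$ is fixed and $\vx_j\perp\mx_{:T}$, so Lemma~\ref{lem:noise_linf_bound_fix} in the form~\eqref{eq:xxinv_v_l2} bounds this by $O(\lam\sqrt{\log(d/\delta)/n})=o(\lam)$. Meanwhile $\norms{\vths_1-\vths_2}_\infty=\lam$. On instance $2$ the support is $T\cup\{j\}$ and $\vxi_2=\vzero_n$, so that metric is also zero.

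\textbf{Main obstacle.} The main obstacle is bookkeeping rather than depth. The metric depends on $S=\supp(\vths)$, which differs between the two instances, and we must check that the lemma's ``or'' is read as ``for any one of the three guarantees.'' With the construction above, a single $1$-sparse perturbation $\vv=\lam\ve_j$ with $j\notin T$, together with $\vxi_1=\mx\vv$ and $\vxi_2=\vzero_n$, makes all three metrics $o(\lam)$ simultaneously. Taking $\lam\to\infty$ relative to these bounds then forces an error of $\Omega(\lam)$ on one instance, and since the observations are identical, no algorithm beats success probability $\half<\frac 2 3$.
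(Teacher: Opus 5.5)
Your proposal is correct and is essentially the paper's proof. It uses the same indistinguishable pair $(\vths,\vxi)=(\bvth,\lam\vx_j)$ versus $(\bvth+\lam\ve_j,\vzero_n)$, the same sub-Gaussian and $\chi^2$ bounds for the first two metrics, and the same independence of $\vx_j$ from $\mx_{:T}$ (via Corollary~\ref{cor:noise_linf_bound_indep}) for the OLS metric. Your background signal $\bvth$ on a fixed $T$ with $j\notin T$ only makes literal the paper's looser choice of ``any set not containing $i$'' for $S$, since the paper's $\vths_1=\vzero_d$ has empty support. One small correction: the separation comes from taking $n\gg\log d$ relative to the hidden constant, not from sending $\lam\to\infty$, because every metric scales linearly in $\lam$.
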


\begin{proof}
Consider two signal-noise pairs $(\vths_1, \vxi_1) = (\vzero_d, \mx \ve_i)$ and $(\vths_2, \vxi_2) = (\ve_i, \vzero_n)$, for any $i \in [d]$. In the third error metric, we take the $S$ in the first pair to be any set not containing $i$, and the $S$ in the second set to be $\{i\}$. Both induce the same observations: 
\begin{equation*}
\vy = \mX \vths_1 + \vxi_1
= \mX (\vth + \ve_j)
= \mX \vths_2 + \vxi_2,
\end{equation*}
and hence are statistically indistinguishable (i.e., no algorithm can beat random guessing). The parameters satisfy $\norms{\vths_1 - \vths_2}_\infty = 1$. On the other hand, by Lemma~\ref{lem:hoeffding}, standard $\chi^2$ concentration bounds (e.g., Lemma 1, \cite{LaurentM00}), and Corollary~\ref{cor:noise_linf_bound_indep} respectively, with probability at least $\frac 2 3$,
\begin{equation*}
\infnorm{\vxi_1} = \infnorm{\mX \ve_1}
= O\Par{\sqrt{\frac{\log n}{n}}},
\quad
\twonorm{\vxi_1} = O(1), \quad 
\infnorm{(\mx^\top\mx)^{-1}_{S\times S} \mx^\top_{S:}\vxi_1} = O\Par{\sqrt{\frac{\log d}{n}}},
\end{equation*}
and trivially $\vxi_2 = \vzero_n$ satisfies the same bounds. Note that in our application of Corollary~\ref{cor:noise_linf_bound_indep}, we used that $\vv \gets \mx \ve_i =  \vx_i$ is independent from $\mx_{:S}$ for $i \not\in S$.

Whenever the above bounds hold,
any of these error metrics allows for exact recovery of $\vths_1$ or $\vths_2$ when $n$ is large enough. This contradicts the maximum success probability of $\frac 2 3 \cdot \frac 1 2 + \frac 1 3$ achievable.
\end{proof}
\section{Tight Error Guarantee for Gaussian Noise Model}
\label{app:discuss_gaussian_noise}

In this section, we give an information-theoretic lower bound for $\ell_\infty$ sparse recovery under a Gaussian noise model. We start with a simple lemma to instantiate the general results in Sections~\ref{sec:for_each} and~\ref{sec:for_all}.

\begin{lemma}
    \label{lem:error_upper_bound}
    For any $\delta \in (0, \half)$, if $\mx \in \R^{n\times d}$ satisfies $(\half, 1)$-RIP, then for $\vxi \sim \Nor(0, \sigma^2 \id_n)$, with probability at least $1 - \delta$,
    \begin{equation*}
        \infnorm{\mx^\top \vxi} \leq O\paren{\sigma \sqrt{\log \frac{d}{\delta}}}.
    \end{equation*}
\end{lemma}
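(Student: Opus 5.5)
The plan is to treat each coordinate $\vx_i^\top \vxi$ as a Gaussian random variable and then take a union bound over the $d$ coordinates. Throughout we regard $\mx$ as fixed; the statement only assumes $\mx$ is $(\half, 1)$-RIP and makes no other use of its randomness.

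\textbf{Step 1 (column norms).} Apply the RIP definition (Definition~\ref{def:RIP}) with $s = 1$ to the standard basis vectors $\ve_i$. This gives $\twonorm{\vx_i}^2 = \twonorm{\mx \ve_i}^2 \le \frac 3 2$ for every $i \in [d]$.

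\textbf{Step 2 (distribution of each coordinate).} Since $\vxi \sim \Nor(0, \sigma^2 \id_n)$ and $\mx$ is fixed, each coordinate satisfies
\[[\mx^\top \vxi]_i = \vx_i^\top \vxi \sim \Nor\Par{0, \sigma^2 \twonorm{\vx_i}^2}.\]
A Gaussian is sub-Gaussian with parameter equal to its variance, so this variance is at most $\frac 3 2 \sigma^2$. Lemma~\ref{lem:hoeffding} then gives, for each $i$,
\[\Pr\Brack{\Abs{\vx_i^\top \vxi} \ge t} \le 2\exp\Par{-\frac{t^2}{3\sigma^2}}.\]

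\textbf{Step 3 (union bound).} Sum this tail bound over $i \in [d]$ and choose $t = \sigma\sqrt{3\log\frac{2d}{\delta}}$. The total failure probability is then at most $\delta$. Since $\delta < \half$, we have $\log\frac{2d}{\delta} = O(\log\frac d \delta)$, which yields $\infnorm{\mx^\top \vxi} = O(\sigma\sqrt{\log \frac d \delta})$.

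\textbf{Possible subtlety.} If $\mx$ is itself random, we only need $\vxi \perp \mx$. In that case we condition on $\mx$ lying in the RIP event and apply Lemma~\ref{lem:fix_param_indep_model}. No step here is a real obstacle; the argument is a standard Gaussian maximal inequality.
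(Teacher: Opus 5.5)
Your proposal is correct and follows the paper's proof essentially verbatim. The paper likewise uses $(\half,1)$-RIP to bound each column norm by $\frac32$, notes that $\vx_i^\top\vxi$ is Gaussian with variance at most $\frac32\sigma^2$, and takes a union bound over $i \in [d]$. Your tail exponent $\frac{t^2}{3\sigma^2}$ is in fact the correct one; the paper writes $\frac{t^2}{2\sigma^2}$, which is a harmless constant slip that does not affect the $O(\cdot)$ conclusion.
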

\begin{proof}
    For any $i\in[d]$, we have $\vx_i^\top \vxi \sim \calN(0, \sigma^2\twonorm{\vx_i}^2)$. Since $\twonorm{\vx_i}^2 \le \frac 3 2$, we have $\Pr[|\vx_i^\top \vxi| \ge t] \le 2\exp(-\tfrac{t^2}{2\sigma^2})$. Taking $t = O(\sigma \sqrt{\log d/\delta})$ and a union bound over $i\in[d]$ leads to the conclusion.
\end{proof}

We next quantify the intrinsic difficulty of $\ell_\infty$ sparse recovery using two standard notions of risk. The minimax risk captures the worst-case expected error over the $k$-sparse parameter class, corresponding to the partially-adaptive Model~\ref{model:for_all}, while the Bayes risk measures the average error under a prior on $\vths$, corresponding to the oblivious Model~\ref{model:for_each}. A basic but useful fact is that the Bayes risk under any prior lower bounds the minimax risk. Our proof roadmap mainly follows \cite{scarlett2019introductory}.

\begin{definition}
Let $\Theta$ be a parameter space such that every $\theta \in \Theta$ indexes a data distribution $\calP_\theta$.
Suppose we want to estimate an unknown $\theta \in \Theta$ from data $Z \sim \calP_\theta$. We define the minimax risk as 
\begin{equation*}
    \calM_{\minmax}(\Theta, Z) = \inf_{\vhth} \sup_{\vths \in \Theta} \E_{Z \sim \calP_{\vths}}\Brack{\infnorm{\vhth(Z) - \vths}},
\end{equation*}
where $\vhth$ is any estimate that is a function\footnote{We restrict to deterministic $\vhth$ without loss: otherwise outputting $\E \vhth$ improves the risk, since $\norm{\cdot}_\infty$ is convex.} of the observations $Z$.
Similarly, if $\theta$ is sampled from some prior $\pi$ over $\Theta$, we define the Bayes risk as 
\begin{equation*}
    \calM_{\bayes}(\pi, \Theta, Z) = \inf_{\vhth} \E_{\substack{\vths \sim \pi \\ Z \sim \calP_{\vths}}}\Brack{\infnorm{\vhth(Z) - \vths}}.
\end{equation*}
\end{definition}

Our lower bound follows a standard information-theoretic route. We construct a finite, well-separated subset of parameters and reduce estimation to identifying the correct candidate parameter. Applying Fano’s inequality relates the achievable estimation accuracy to the mutual information between the observations and the underlying index, yielding a quantitative lower bound on the Bayes (and therefore, minimax) risk. 

\begin{lemma}[Fano's inequality]
    \label{lem:fano_ineq}
    Let $V$ and $\hV$ be discrete random variables on a common set $\calV$, where $V \simu \calV$. Then letting $I$ denote the mutual information,
    \begin{equation*}
        \Pr\Brack{\hV \neq V} \geq 1 - \frac{I(V; \hV) + \log 2}{\log |\calV|}.
    \end{equation*}
\end{lemma}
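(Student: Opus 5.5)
We prove the bound with the standard data-processing route: first bound the mutual information between $V$ and the decoded estimate by the mutual information between $V$ and the observation, then pass from the error probability to the conditional entropy $H(V \mid \hV)$ through a Fano-type inequality, and finally use that $V$ is uniform.

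\textbf{Fano step.} Let $E \defeq \ind_{\{\hV \neq V\}}$ and $P_e \defeq \Pr[\hV \neq V]$. Expand $H(E, V \mid \hV)$ by the chain rule in two ways. First, $H(E, V \mid \hV) = H(V \mid \hV) + H(E \mid V, \hV) = H(V \mid \hV)$, since $E$ is a deterministic function of $(V, \hV)$. Second, $H(E, V \mid \hV) = H(E \mid \hV) + H(V \mid E, \hV) \le \log 2 + H(V \mid E, \hV)$, using that $E$ is binary. To bound $H(V \mid E, \hV)$, condition on the value of $E$:
\begin{itemize}
\item on $\{E = 0\}$ we have $V = \hV$, so this event contributes nothing;
\item on $\{E = 1\}$, $V$ takes at most $|\calV|$ values, so this event contributes at most $P_e \log |\calV|$.
\end{itemize}
Combining the two expansions gives
\[
H(V \mid \hV) \le \log 2 + P_e \log |\calV|.
\]

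\textbf{Uniformity step.} Since $V \simu \calV$, we have $H(V) = \log |\calV|$, hence
\[
I(V; \hV) = H(V) - H(V \mid \hV) \ge \log |\calV| - \log 2 - P_e \log |\calV|.
\]
Rearranging yields $P_e \ge 1 - \frac{I(V; \hV) + \log 2}{\log |\calV|}$, which is the claimed bound.

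\textbf{Use in the lower bound.} When the estimate is formed from data $Z$ (so $V \to Z \to \hV$ is a Markov chain), the data-processing inequality gives $I(V; \hV) \le I(V; Z)$, so the same lower bound holds with $I(V; Z)$ in place of $I(V; \hV)$; this is the form we need for the Gaussian-noise lower bound. There is no real obstacle here: the argument is textbook, and the only care required is the conditioning on $E$ in the Fano step.
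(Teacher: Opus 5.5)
Your proof is correct and is the standard chain-rule argument for Fano's inequality: you expand $H(E,V\mid\hV)$ in two ways, bound $H(V\mid E,\hV)\le P_e\log|\calV|$, and use $H(V)=\log|\calV|$. The paper does not prove this lemma itself; it quotes it as standard, following \cite{scarlett2019introductory}, so there is no competing route to compare, and your closing data-processing remark is worth keeping because Lemma~\ref{lem:risk_LB} applies this lemma but states its conclusion with $I(V;\vy)$ in place of $I(V;\hV)$, which is justified exactly by $I(V;\hV)\le I(V;\vy)$ along the Markov chain $V\to\vy\to\hV$ (since $\hV$ is a deterministic function of $\vhth(\vy)$).
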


The following calculation helps simplify applications of Lemma~\ref{lem:fano_ineq}.

\begin{lemma}[Lemma 4, \cite{scarlett2019introductory}]
    \label{lem:MI_kl_upper}
    let $P_V$, $P_{\vy}$ and $P_{\vy \mid V}$ be the marginal and conditional distributions with respect to a pair of jointly-distributed random variables $(V, \vy)$. Then for any auxiliary distribution $Q_\vy$, we have 
    \begin{equation*}
        I(V; \vy) = \sum_{v \in \calV} P_V(v) \kldiv{P_{\vy \mid V}(\cdot \mid v) \Vert P_\vy} \leq \sum_v P_V(v) \kldiv{P_{\vy \mid V}(\cdot \mid v) \Vert Q_\vy}.
    \end{equation*}
\end{lemma}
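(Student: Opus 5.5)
This is a standard fact (Scarlett--Cevher, Lemma 4), and the plan is a direct two-line computation using the chain rule for KL divergence. Write $P_{\vy} = \sum_{v} P_V(v) P_{\vy \mid V}(\cdot \mid v)$ for the marginal. The first equality is the definition of mutual information, $I(V;\vy) = \kldiv{P_{V,\vy} \Vert P_V \otimes P_{\vy}}$, rewritten in conditional form. The joint density factors as $P_V(v) P_{\vy\mid V}(\vy \mid v)$, and the product measure as $P_V(v) P_{\vy}(\vy)$. So the log-likelihood ratio is $\log \frac{P_{\vy\mid V}(\vy\mid v)}{P_{\vy}(\vy)}$, and averaging first over $\vy \sim P_{\vy\mid V}(\cdot\mid v)$ and then over $v \sim P_V$ gives $\sum_v P_V(v)\kldiv{P_{\vy\mid V}(\cdot\mid v)\Vert P_{\vy}}$.

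For the inequality, fix any $Q_{\vy}$. We may assume $P_{\vy\mid V}(\cdot \mid v) \ll Q_{\vy}$ for every $v$ with $P_V(v) > 0$, since otherwise the right-hand side is $+\infty$ and there is nothing to prove. Then $P_{\vy} \ll Q_{\vy}$ as well. For each such $v$, insert $Q_{\vy}$ into the log-ratio:
\[
\log \frac{P_{\vy\mid V}(\vy\mid v)}{P_{\vy}(\vy)} = \log \frac{P_{\vy\mid V}(\vy\mid v)}{Q_{\vy}(\vy)} - \log \frac{P_{\vy}(\vy)}{Q_{\vy}(\vy)}.
\]
Take expectations under $\vy \sim P_{\vy\mid V}(\cdot\mid v)$ and then average over $v \sim P_V$. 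The second term averages to $\E_{\vy\sim P_{\vy}}[\log \frac{P_{\vy}}{Q_{\vy}}]$, because mixing the conditionals with weights $P_V$ recovers $P_{\vy}$. This yields the identity
\[
\sum_v P_V(v)\kldiv{P_{\vy\mid V}(\cdot\mid v)\Vert Q_{\vy}} = I(V;\vy) + \kldiv{P_{\vy}\Vert Q_{\vy}}.
\]
The claim then follows from the nonnegativity of KL divergence (Gibbs' inequality), with equality iff $Q_{\vy} = P_{\vy}$.

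The only step needing care is measure-theoretic: the densities may be continuous (e.g., Gaussian $\vy$), so all ratios should be read as Radon--Nikodym derivatives with respect to a common dominating measure. The exchange of the sum over the discrete set $\calV$ with the integral should be justified by Tonelli's theorem. One cannot apply Tonelli to the log-ratio directly, since it can be negative. Instead, apply it to each integrand after adding $Q_{\vy}$-density terms, using the nonnegative form $f \log f - f + 1 \ge 0$ of each KL integrand. If some divergences are infinite, the inequality holds trivially.
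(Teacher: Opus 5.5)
Your proof is correct. The paper gives no proof of its own and simply cites Lemma 4 of \cite{scarlett2019introductory}; your argument is the standard one for that lemma, namely the identity $\sum_v P_V(v)\kldiv{P_{\vy\mid V}(\cdot\mid v)\Vert Q_{\vy}} = I(V;\vy) + \kldiv{P_{\vy}\Vert Q_{\vy}}$ followed by nonnegativity of KL divergence. Your measure-theoretic handling is sound: reducing to the absolutely continuous case, then applying Tonelli to the nonnegative integrands $f\log f - f + 1$. It is also more than the paper needs, since it only applies the lemma with finite $\calV$ and Gaussian observations.
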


\begin{lemma}[Risk lower bound via exact recovery]
    \label{lem:risk_LB}
    Under Model~\ref{model:for_each}, fix $\eps > 0$, and let $\Theta_\calV = \{\vth_v\}_{v \in \calV}$ be a finite subset of $\Theta$, such that 
    \begin{equation}
        \label{eq: assum_packing}
        \infnorm{\vth_v - \vth_{v^\prime}} \geq \eps, \text{ for all } (v, v') \in \calV^2,\; v \neq v'.
    \end{equation}
    Then if $\pi$ is uniform over $\Theta_{\calV}$, and $\vy$ are observations generated from a distribution indexed by $\vth_V$, 
    \begin{equation*}
        \calM_{\minmax}(\Theta, \vy) \ge \calM_{\bayes}(\pi, \Theta, \vy) \geq \frac{\eps}{2}\paren{1 - \frac{I(V; \vy) + \log 2}{\log |V|}}.
    \end{equation*}
\end{lemma}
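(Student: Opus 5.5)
The plan is the standard Fano reduction: convert estimation into a multiple-hypothesis identification problem over the packing $\Theta_\calV$, bound the identification error by Lemma~\ref{lem:fano_ineq}, and then take infima. The first inequality, $\calM_{\minmax}(\Theta, \vy) \ge \calM_{\bayes}(\pi, \Theta, \vy)$, is immediate. For any fixed estimator $\vhth$, the worst case over $\Theta$ dominates the average over $\pi$, since $\pi$ is supported on $\Theta_\calV \subseteq \Theta$. Taking the infimum over $\vhth$ on both sides gives the claim.

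For the Bayes lower bound, fix an arbitrary (deterministic) estimator $\vhth = \vhth(\vy)$. Define the minimum-distance decoder $\hV \defeq \arg\min_{v \in \calV} \infnorm{\vhth - \vth_v}$, with ties broken arbitrarily. The key step is the separation argument. Suppose $\infnorm{\vhth - \vth_V} < \frac{\eps}{2}$. Then for every $v' \neq V$, the triangle inequality and \eqref{eq: assum_packing} give
\[\infnorm{\vhth - \vth_{v'}} \ge \infnorm{\vth_V - \vth_{v'}} - \infnorm{\vhth - \vth_V} > \frac{\eps}{2},\]
so $\hV = V$. Contrapositively, $\{\hV \neq V\} \subseteq \{\infnorm{\vhth - \vth_V} \ge \frac{\eps}{2}\}$. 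Markov's inequality (or simply $\infnorm{\cdot} \ge \frac{\eps}{2}\ind$ on that event) then yields
\[\E\Brack{\infnorm{\vhth - \vth_V}} \ge \frac{\eps}{2}\Pr\Brack{\hV \neq V}.\]

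Next we apply Lemma~\ref{lem:fano_ineq}, using that $V$ is uniform on $\calV$ because $\pi$ is uniform over $\Theta_\calV$. This gives $\Pr[\hV \neq V] \ge 1 - \frac{I(V;\hV) + \log 2}{\log|\calV|}$. Since $\hV$ is a (deterministic) function of $\vy$, the data processing inequality gives $I(V;\hV) \le I(V;\vy)$, and the bound only weakens when we substitute $I(V;\vy)$. Combining with the previous display gives a lower bound on the Bayes risk of this $\vhth$ that does not depend on $\vhth$. Taking the infimum over $\vhth$ finishes the proof.

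Here $|V|$ in the statement is read as $|\calV|$. If randomized estimators are allowed, the same argument goes through by conditioning on the internal randomness, which is independent of $V$, or by invoking the footnote's convexity reduction.

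I expect no serious obstacle in this argument. The only points requiring care are the data processing step, which justifies replacing $I(V;\hV)$ by $I(V;\vy)$, and the measurability of the decoder. The latter is harmless because $\calV$ is finite.
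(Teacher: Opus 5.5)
Your proposal is correct and follows the paper's proof essentially step for step. The shared steps are the minimum-distance decoder $\hV$, the triangle-inequality separation giving $\{\hV \neq V\} \subseteq \{\infnorm{\vhth - \vth_V} \ge \frac{\eps}{2}\}$, Markov's inequality at $t = \frac{\eps}{2}$, and Fano's inequality (Lemma~\ref{lem:fano_ineq}). You also make explicit two steps the paper leaves implicit: the minimax-versus-Bayes comparison, and the data-processing inequality $I(V;\hV) \le I(V;\vy)$ needed to reach the stated form of the bound.
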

\begin{proof}
    By Markov's inequality, we have 
    \begin{equation*}
        \expect{\infnorm{\vhth - \vths}} \geq t\cdot \Pr\Brack{\infnorm{\vhth - \vths} > t}.
    \end{equation*}
    For any estimator $\vhth(\vy)$, let $\hV = \arg\min_{v\in \calV} \norms{\vhth - \vth_v}_\infty$. 
    Using the triangle inequality and our assumption~\eqref{eq: assum_packing}, if $\norms{\vhth - \vth_V}_\infty < \frac \eps 2$, then $\hV = V$, hence 
    \begin{equation*}
        \prob{\infnorm{\vhth - \vth_V} \geq \frac{\eps}{2}} \geq \prob{\hat{V} \neq V}.
    \end{equation*}
    Thus, taking $t \gets \frac \eps 2$ and applying Lemma~\ref{lem:fano_ineq} concludes the proof.
\end{proof}

\begin{lemma}
    \label{lem:info_LB}
    Under Model~\ref{model:for_each}, let $d = \Omega(k)$, $\sigma > 0$, $\eps \defeq \frac \sigma 2 \sqrt{\log \frac d k}$, and define 
    \[\Omega_k = \{\vths \in \R^d:  |\supp(\vths)| \le k\},\quad \calV = \Brace{\eps \vv : \vv \in \cbra{-1, 0, 1}^d, \onenorm{\vv} = k}.\] 
    Let $\pi$ be uniform over $\Theta_{\calV}$ where $\theta_{\vv} \defeq \vv$. If $\mx$ satisfies $(\half, 1)$-RIP and $\vxi \sim \calN(0, \sigma^2 \id_n)$, then
    \begin{equation*}
    \calM_{\minmax}(\Omega_k, \vy) \geq \calM_{\bayes}(\pi, \Omega_k, \vy) = \Omega\paren{\sigma\sqrt{\log \frac{d}{k}}}.
    \end{equation*}
\end{lemma}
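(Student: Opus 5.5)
We will apply the Fano-based bound of Lemma~\ref{lem:risk_LB} to the packing $\calV$. Two ingredients are needed: the separation $\eps$ between distinct packing elements, and an upper bound on $I(V;\vy)$ that is at most a small constant fraction of $\log|\calV|$. Once we have $I(V;\vy)\le \tfrac12\log|\calV|$ (for $d/k$ large enough that $\log 2$ is negligible), Lemma~\ref{lem:risk_LB} gives $\calM_{\bayes}\ge \frac{\eps}{2}\cdot\Omega(1)=\Omega(\sigma\sqrt{\log\frac dk})$. The minimax inequality then follows because $\Theta_\calV\subseteq\Omega_k$.

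\textbf{Separation and cardinality.} Take distinct $\vv\neq\vv'$ in $\{-1,0,1\}^d$. They differ in some coordinate, and since both lie in $\{-1,0,1\}$ they differ there by at least $1$. Hence $\norms{\eps\vv-\eps\vv'}_\infty\ge\eps$, which is condition \eqref{eq: assum_packing}. For the cardinality,
\[|\calV|=\binom dk 2^k\ge (d/k)^k,\]
so $\log|\calV|\ge k\log\frac dk$.

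\textbf{Mutual information.} We treat $\mx$ as fixed; this is allowed by Lemma~\ref{lem:fix_param_indep_model} and Model~\ref{model:for_each}. Conditionally on $V=\vv$, the observation is $\vy\sim\Nor(\mx\eps\vv,\sigma^2\id_n)$. We apply Lemma~\ref{lem:MI_kl_upper} with auxiliary distribution $Q_\vy=\Nor(\vzero_n,\sigma^2\id_n)$, which gives
\[I(V;\vy)\le \E_V\frac{\norms{\eps\mx\vv}_2^2}{2\sigma^2}.\]
The expectation has to be bounded using only $(\half,1)$-RIP, which does not control $\norms{\mx\vv}_2$ for $k$-sparse $\vv$ in the worst case. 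We therefore average over the random signs. For a fixed support, the signs are uniform and independent, so
\[\E\norms{\mx\vv}_2^2=\sum_{i\in\supp}\norms{\vx_i}_2^2\le\tfrac32 k.\]
This yields
\[I(V;\vy)\le \frac{3k\eps^2}{4\sigma^2}=\frac{3k}{16}\log\frac dk.\]

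\textbf{Conclusion and main obstacle.} Combining the two bounds,
\[\frac{I(V;\vy)+\log 2}{\log|\calV|}\le \frac{3}{16}+\frac{\log 2}{k\log(d/k)}\le\frac12,\]
where the last step uses that $d/k$ exceeds a sufficiently large constant; this is how we read the hypothesis $d=\Omega(k)$. Lemma~\ref{lem:risk_LB} then gives $\calM_{\bayes}\ge\frac{\eps}{4}=\Omega(\sigma\sqrt{\log\frac dk})$. The step that needs the most care is the mutual information bound. Bounding $\norms{\mx\vv}_2^2$ uniformly over $\vv$ would require $k$-sparse RIP, which is not assumed; the sign-averaging argument avoids this and needs only column-norm control from $(\half,1)$-RIP. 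The constant in $\eps=\frac\sigma2\sqrt{\log\frac dk}$ is what makes the ratio fall below $\frac12$.
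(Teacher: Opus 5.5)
Your proposal is correct and follows the paper's proof essentially step for step: Fano via Lemma~\ref{lem:risk_LB}, the KL bound against $Q_{\vy} = \calN(\vzero_n, \sigma^2 \id_n)$ from Lemma~\ref{lem:MI_kl_upper}, and a second-moment bound that uses only column norms. Your per-support sign averaging is the same observation as the paper's $\cov{V} = \frac{k}{d}\id_d$ combined with $\normf{\mx}^2 = O(d)$. One small improvement: the paper counts $|\calV| \ge (2d/k)^k$, and with that bound your ratio is at most $\frac{3}{16} + \frac{1}{k}$, so you only need $k \ge 4$ rather than $d/k$ exceeding a large constant.
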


\begin{proof}
Our construction of $\calV$ trivially satisfies the bound \eqref{eq: assum_packing}, so by Lemma~\ref{lem:risk_LB},
    \begin{equation}\label{eq:fano_conclusion}
        \calM_{\bayes}(\pi, \Omega_k, \vy) \geq \frac{\eps}{2}\paren{1 - \frac{I(V; \vy) + \log 2}{\log |\calV|}}.
    \end{equation}
    Similarly, by Lemma~\ref{lem:MI_kl_upper}, 
    \begin{align*}
    I(V; \vy) &\leq  \sum_{v \in \calV} P_V(v) \kldiv{P_{\vy \mid V}(\cdot \mid v) \Vert Q_{\vy}}.
    \end{align*}
    Taking $Q_{\vy} = \calN(0, \sigma^2)$ for all $\vy$, we have 
    \begin{align*}
    I(V; \vy) &\leq  \frac{1}{|\calV|}\sum_{v \in \calV}\frac{\twonorm{\mx\vth_v}^2}{2\sigma^2} = \frac{\eps^2}{2\sigma^2}\bbE_{V}\bra{\twonorm{\mx \vone_V}^2} = \frac{\eps^2}{2\sigma^2}\Tr(\mx \cov{V}\mx^\top).
    \end{align*}
    It is straightforward by a symmetry argument that $\cov{V} = \frac{k}{d}\id_d$, and thus 
    \begin{equation*}
        I(V; \vy) \leq \frac{\eps^2k}{2d\sigma^2}\normf{\mx}^2.
    \end{equation*}
    Since $\mx$ satisfies $(\half, 1)$-RIP, $\twonorm{\vx_i}^2 \leq 2$ for all $i \in [d]$, which implies $\normf{\mx}^2 \leq 2 d$. We then obtain 
    \begin{equation}
        \label{eq:mi_bound}
        I(V; \vy) \leq \frac{\eps^2k}{\sigma^2}.
    \end{equation}
    At this point, we can conclude the result by plugging \eqref{eq:mi_bound} back into \eqref{eq:fano_conclusion}:
    \[\frac{I(V; \vy) + \log 2}{\log |\calV|} \le \frac{k\log(\frac d k)}{2\log |\calV|} \le \half,\]
    where we used that the cardinality of $|\calV|$ satisfies
    \[|\calV| = \binom{d}{k} \cdot 2^k \ge \Par{\frac{2d}{k}}^k. \]
\end{proof}

Lemmas~\ref{lem:error_upper_bound} and~\ref{lem:info_LB} show that the minimax risk of $\ell_\infty$ sparse recovery, given an RIP matrix $\mx$ and Gaussian noise $\vxi$, even in the limit of infinite observations, is  $\Omega(\norms{\mx^\top \vxi}_\infty)$ with high probability. This is matched by all of our upper bounds up to (at most) a sublogarithmic overhead.

%% file: tex_file/appendix2.tex
\section{Application to Spike-and-Slab Posterior Sampling}\label{app:sample}

In this appendix, we briefly discuss one application of our upper bound results for $\ell_\infty$ sparse recovery. Recently, \cite{KumarSTZ25} gave an algorithm for Bayesian sparse linear regression in the following model.

\begin{model}[Spike-and-slab posterior sampling]\label{model:sample}
In the \emph{spike-and-slab posterior sampling} problem, $\mx \in \R^{n \times d}$ is drawn under Model~\ref{model:subg}, $\vxi \sim \Nor(0, \sigma^2)^n$ independently,  and $\vths \in \R^d$ is independently drawn from the following \emph{spike-and-slab} prior, for $\vq \in [0, 1]^d$ with $\norm{\vq}_1 = k$:
\[\pi \defeq \bigotimes_{i \in [d]} \Par{(1 - \vq_i) \delta_0 + \vq_i \Nor(0, 1)},\]
where $\delta_0$ is a Dirac density at $0$. For some $\delta \in (0, 1)$, the goal is to produce a sample from the posterior $\pi(\cdot \mid \mx, \vy)$ within total variation $\delta$, where $\vy = \mx \vths + \vxi$ are the observations.
\end{model}

Under Model~\ref{model:sample}, the parameter $k$ can be viewed as an expected sparsity level for the signal $\vths \sim \pi$, because coordinate $i \in [d]$ is only nonzero with probability $\delta$. The key challenge in solving Model~\ref{model:sample} is that for noise levels $\sigma$ that are $\approx 1$, there are many coordinates that could plausibly either be part of the signal $\vths$, or masked by the random noise $\vxi$. To model this uncertainty, the posterior density $\pi(\cdot \mid \mx, \vy)$ places nontrivial mass on $d^{\Omega(k)}$ candidate supports, and therefore the algorithm must successfully sample from this exponentially-sized candidate set.

Prior works by \cite{MC_YWJ16, montanari2024provably} respectively designed algorithms for restricted variants of Model~\ref{model:sample} that succeeded when $\sig$ is relatively large or small, and when the number of observations $n \gtrsim d$. The algorithm by \cite{KumarSTZ25} lifts these restrictions, and solves the sampling problem in Model~\ref{model:sample} for any $\sig > 0$ and using $n = \poly(k, \log(\frac d \delta))$ samples. In particular, \cite{KumarSTZ25} gives two algorithms: the first is based on $\ell_2$ sparse recovery, and thus runs in nearly-linear time (e.g., using Lemma~\ref{lem:IHT_l2_recover}), but uses $n \approx k^5$ observations to compensate for the weaker recovery guarantee. The second algorithm leverages $\ell_\infty$ sparse recovery, and uses an improved $n \approx k^3$, but its runtime was previously based on the LASSO (see Theorem 1, \cite{KumarSTZ25}, which claims a runtime of $\approx n^2 d^{1.5}$).

Substituting our Theorems~\ref{thm:l_inf_norm_bound_model1} or~\ref{thm:linf_iht} in place of the LASSO (Proposition 3, \cite{KumarSTZ25}) immediately gives the following improved runtime for spike-and-slab posterior sampling.

\begin{corollary}
In the setting of Model~\ref{model:sample}, if $n = \Omega(k^3 \polylog(\frac d \delta))$, there is an algorithm that returns $\vth \sim \pi'$ such that $D_{\textup{TV}}(\pi', \pi(\cdot \mid \mx, \vy)) \le \delta$ with probability $\ge 1 - \delta$ over the randomness of $(\mx, \vths, \vxi)$, which runs in time
\[O\Par{nd\log\Par{\frac{\log \frac 1 \delta}{\min(1, \sig)}}}.\]
\end{corollary}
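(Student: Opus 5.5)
The plan is to treat the corollary as a black-box substitution into the second algorithm of \cite{KumarSTZ25}. That algorithm uses an $\ell_\infty$ sparse recovery subroutine (Proposition 3 there, instantiated with the LASSO) to get a warm start $\vth$ with $\norms{\vth - \vths}_\infty$ small. It then runs a sampling procedure localized around this estimate. The $n \approx k^3$ requirement comes from the sampling analysis, not from the subroutine. So it suffices to check two things: our estimators deliver the same type of guarantee under Model~\ref{model:sample}, and the rest of their pipeline runs in time $O(nd \cdot \text{(iterations)})$.

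First, we verify the preconditions. Under Model~\ref{model:sample}, $\vths \sim \pi$ and $\vxi \sim \Nor(0,\sig^2\id_n)$ are independent of $\mx$, so Model~\ref{model:for_each} holds. By a Chernoff bound, $\nnz(\vths) = O(k + \log\frac1\delta)$ with probability $1-\delta/4$. We call our algorithm with sparsity parameter $k' = O(k+\log\frac1\delta)$; this only changes $n$ by constants and fits within $n = \Omega(k^3\polylog(\frac d\delta))$. For the norm parameter, $\norms{\vths}_2^2$ is a sum of at most $k'$ $\chi^2$ terms, so $R = O(\sqrt{k'\log\frac1\delta})$ is a valid upper bound with high probability. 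By Lemma~\ref{lem:error_upper_bound} we have $\norms{\mx^\top\vxi}_\infty = O(\sig\sqrt{\log\frac d\delta})$. We then invoke Theorem~\ref{thm:l_inf_norm_bound_model1} with $r \approx \sig\sqrt{\log\frac d\delta\log\frac n\delta}$, or Corollary~\ref{cor:l_inf_norm_bound_model1-general} for arbitrary $r$. Alternatively, since $n \gtrsim k^2\log\frac d\delta$, Theorem~\ref{thm:linf_iht} applies directly and gives $\norms{\vth-\vths}_\infty \le r + O(\sig\sqrt{\log\frac d\delta})$. This matches, up to polylogarithmic factors that are absorbed into the $\polylog$ in $n$, the $\ell_\infty$ accuracy Proposition 3 of \cite{KumarSTZ25} needs. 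We also apply Lemma~\ref{lem:proper} if a $k$-sparse output is required.

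Second, we handle the runtime. The recovery step costs $O(nd\log\frac Rr)$. The target accuracy in \cite{KumarSTZ25} is $r \approx \min(1,\sig)$ up to logarithmic factors, and $R = \polylog(\frac1\delta)$, so $\log\frac Rr = O(\log\frac{\log(1/\delta)}{\min(1,\sig)})$. The remaining steps of their sampler (support screening from $\mx^\top\vy$, and the localized sampling using the Gram structure) must also be shown to cost $O(nd)$ times at most this many iterations. I expect this to be the main obstacle. It requires reading their Theorem 1 to confirm that the $n^2d^{1.5}$ term arises only from the LASSO solve, and that every other step is dominated by matrix-vector products with $\mx$.

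Finally, we union bound the failure probabilities: sparsity concentration, the norm bound $R$, the noise bound, the recovery guarantee, and the sampler's own $\delta$. This gives total variation $\le \delta$ with probability $\ge 1-\delta$ after rescaling $\delta$ by a constant.
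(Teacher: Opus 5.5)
Your approach is the same as the paper's: substitute one of the $\ell_\infty$ recovery theorems for the LASSO in Proposition~3 of \cite{KumarSTZ25}, and defer the rest of the pipeline to the proof of Theorem~1 there. That includes the per-step cost of the remaining sampler, which you flag as the main obstacle; the paper also takes it as given from that reference.

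The one step that does not go through as written is the runtime. The claimed bound $O(nd\log\frac{\log(1/\delta)}{\min(1,\sig)})$ requires the initial radius $R$ to be only polylogarithmic. That is available for Theorem~\ref{thm:linf_iht}, whose $R$ need only upper bound $\infnorm{\vths}$, and this is polylogarithmic with high probability under the Gaussian slab. The paper takes $R^2 = O(\log\frac1\delta)$ and $r = \Omega(\sig)$.

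Your primary route, Theorem~\ref{thm:l_inf_norm_bound_model1} or Corollary~\ref{cor:l_inf_norm_bound_model1-general}, instead needs $R \ge \twonorm{\vths}$, which is of order $\sqrt{k}$. Indeed your precondition check sets $R = O(\sqrt{k'\log(1/\delta)})$. Plugging this into the stated runtime $O(nd\log\frac Rr)$ leaves an additive $\log k$ inside the logarithmic factor, which the corollary does not have. Your runtime paragraph then uses $R = \polylog(\frac1\delta)$ without comment, which is inconsistent with that choice. The same issue arises if you run Theorem~\ref{thm:linf_iht} with your $\ell_2$-based $R$.

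The fix is to commit to Theorem~\ref{thm:linf_iht}, with $R$ chosen as a high-probability bound on $\infnorm{\vths}$ rather than $\twonorm{\vths}$. Its requirement $n = \Omega(k'^2\log\frac d\delta)$ fits within your sample budget. Its error $r + 2\infnorm{\mx^\top\vxi}$ also avoids the $\sqrt{\log(n/\delta)}$ overhead of the oblivious theorem.
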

\begin{proof}
The runtime of, e.g., Theorem~\ref{thm:linf_iht} meets the described bound, where we may take $r = \Omega(\sig)$ and $R^2 = O(\log \frac 1 \delta)$ with probability $\ge 1 - \delta$ under our modeling assumptions. The remainder of the proof follows identically to the proof of Theorem 1 in \cite{KumarSTZ25}.
\end{proof}